\numberwithin{equation}{subsection}
\numberwithin{figure}{subsection}
\newtheorem{introtheorem}{Theorem}
\newtheorem{theorem}{Theorem}[subsection]
\newtheorem{lemma}[theorem]{Lemma}
\newtheorem{proposition}[theorem]{Proposition}
\newtheorem{corollary}[theorem]{Corollary}
\theoremstyle{definition}
\newtheorem{definition}[theorem]{Definition}
\newtheorem{remark}[theorem]{Remark}
\newtheorem*{question*}{Question}
\newtheorem*{steps*}{Answer/steps}
\newtheorem*{progress*}{Progress}
\newtheorem*{example*}{Example}
\newtheorem{conditions}[theorem]{Conditions}
\newtheorem*{remark*}{Remark}
\newtheorem*{remarks*}{Remarks}
\newtheorem*{definition*}{Definition}
\newcommand{\QQ}{\mathbb{Q}}
\newcommand{\QQb}{{\overline{\QQ}}}
\newcommand{\Q}{\mathbb{Q}}
\newcommand{\ZZ}{\mathbb{Z}}
\newcommand{\FF}{\mathbb{F}}
\newcommand{\PP}{\mathbb{P}}
\DeclareMathOperator{\dlog}{{dlog}}
\DeclareMathOperator{\Aut}{Aut}
\DeclareMathOperator{\Gal}{Gal}
\DeclareMathOperator{\sgn}{sgn}
\DeclareMathOperator{\supp}{supp}
\DeclareMathOperator{\Spec}{Spec}
\keywords{Belyi map,
arboreal Galois group, dynamical sequence.}
\subjclass[2010]{11G32, 12F10 (primary),  37P05, 37P15
(secondary).}
\begin{document}

\title{Dynamical Belyi maps and arboreal Galois groups}
\author{Irene I.~Bouw}
\address{Ulm University, Ulm, Germany}
\email{irene.bouw@uni-ulm.de}
\author{\"{O}zlem Ejder}
\address{Colorado State  University, Fort Collins, CO, USA}
\email{ejder@math.colostate.edu}
\author{Valentijn Karemaker}
\address{Utrecht University, Utrecht, The Netherlands and Stockholm University, Stockholm, Sweden}
\email{V.Z.Karemaker@uu.nl}

\begin{abstract}
We consider a large class of so-called dynamical Belyi maps and study
the Galois groups of iterates of such maps. From the combinatorial
invariants of the maps, we construct a useful presentation of the geometric
Galois groups as subgroups of automorphism groups of regular trees, in
terms of iterated wreath products. Using results on the reduction of dynamical Belyi maps modulo certain primes, we obtain results on the corresponding arithmetic Galois groups of iterates. These lead to results on the behavior of the arithmetic Galois groups under specialization, with applications to dynamical sequences.
\end{abstract}

\date{}
\maketitle

\section*{Introduction}
 
 Let $f:\PP^1_K\to \PP^1_K$ be a rational map defined over a number 
 field $K$. The Galois theory of the iterates $f^n=f\circ \cdots \circ
 f: \PP^1_K\to \PP^1_K$ was first studied in the work of Odoni
 (\cite{Odoni-iterates} ), and has applications both in number theory
 and in arithmetic dynamics.  
  Specializing the iterates $f^n$ at a $K$-rational place $a\in \PP^1_K$, we obtain a  tower of number
 fields $(K_{n,a})_{n \geq 1}$. Many recent papers  study the question of the
 distribution of the  primes of $K$ that ramify in this tower.

We denote the Galois group of $f^n$ by $G_{n, K}= G_{n,K}(f)$ and the
Galois group of its specialization at $a$ by
$G_{n,a}=G_{n,a,K}(f)$. For places $a$ avoiding a finite subset of
$\PP^1_K$, the limit $G_{\infty, a}=\varprojlim_n G_{n, a}$ acts on
the infinite $d$-ary regular tree $T_\infty$, where $d=\deg(f)$. We
obtain an \emph{arboreal representation}, and hence a map of
$G_{\infty, a}$ to the automorphism group $\Aut({T}_\infty)$ of the
tree; this is why we call the group $G_{\infty,a}$ an \emph{arboreal
  Galois group}.  A paper by Jones (\cite{JonesSurvey} ) provides a
survey of the theory of arboreal representations.  A central question
is to characterize when $G_{\infty,a}$ embeds in $\Aut({T}_\infty)$ as
a subgroup of finite index (\cite[Question 1.1]{JonesSurvey} ). As
Jones discusses, this question may be considered as an analog of
Serre's open image theorem.  We take a different perspective and study
a class of rational maps for which the image of $G_{\infty,a}$ has
infinite index in $\Aut({T}_\infty)$.

The maps we study in this paper are Belyi maps $f:\PP^1_K\to \PP^1_K$
with exactly $3$ ramification points, which we assume to be
$0,1,\infty$. We normalize the coordinate on the target projective
line such that the ramification points are fixed by $f$. A map $f$
satisfying these properties is called a \emph{normalized (dynamical)
  Belyi map} and is completely determined by its (combinatorial) type
(Definition \ref{def:type}), which tabulates the ramification
indices. In particular, all maps may be defined over $K=\QQ$
(Proposition \ref{prop:rigid}). All normalized Belyi maps are
so-called post-critically finite (PCF) maps, since the forward orbit
of each ramification point is preperiodic.  A result of Jones and Pink
(\cite[Theorem 3.1]{JonesSurvey} ) states that the index
$[\Aut({T}_\infty):G_{\infty,a}]$ is infinite for PCF maps, and hence
for ours, as well.

The arboreal Galois group $G_{\infty, a}$ of a specialization of a PCF
map is a mysterious group, which is hard to describe in general.  The
case that $f$ is a polynomial of degree $\deg(f)=2$ has been
extensively studied, see e.g.~\cite{census}, \cite{pink}.  In
\cite{BFHJY}, the authors consider the case of the cubic polynomial
$f(x)=-2x^3+3x^2$, providing the first completely explicit result on
its Galois theory. This cubic polynomial is the easiest example of a
normalized Belyi map.

A related problem, which is very interesting but difficult in general,
is to determine the primes that ramify in the tower of number fields
$(K_{n,a})_{n \geq 1}$.  For PCF maps $f$, it is known that only
finitely many primes ramify in the whole tower $(K_{n,a})_{n \geq
  1}$. This was proven in \cite[Theorem 1.1]{aitken} in the case that
$f$ is a polynomial and in \cite[Theorem 1]{CulFar} in the general
case; see also \cite[Theorem 3.2]{JonesManes}.

\bigskip
The goal of this paper is to study the Galois group $G_{\infty, a}$
and the primes ramifying in the corresponding tower of number fields
for normalized Belyi maps.  The class of normalized Belyi maps we
study in this paper contains maps of arbitrarily large degree, which
are not necessarily polynomial. Explicit expressions exist for these
maps; Proposition \ref{prop:Belyifam} gives a sample. However, we do
not rely on these to prove general results for this class of
maps. 

Instead, we systematically use the Galois-theoretical
characterization of these maps (Section \ref{sec:belyi}) and exploit the description of their Galois groups as an iterated wreath product. 
In particular, the combinatorial description of the monodromy of $f$ (in terms of its combinatorial type) yields a combinatorial description of the monodromy of its iterates, which enables us to explicitly determine the geometric Galois groups $G_{n,\QQb}$ for all $n \geq 1$ in Theorem~\ref{thm:G2alt}. 
Furthermore, the product discriminant that we introduce in Section~\ref{sec:descent} allows us to study the arithmetic Galois groups $G_{n,\QQ}$ and to make a comparison between the arithmetic and geometric Galois groups in Section~\ref{sec:descent}.
Finally, in analyzing the properties of the specialized Galois groups $G_{n,a}$ (Theorem~\ref{G2aG2}) our full understanding of the ramification structure of iterates of dynamical Belyi maps plays a key role. Another important ingredient is that the reduction behavior of a normalized Belyi map $f$, like its monodromy, can also be completely expressed in terms of its combinatorial type, often yielding explicit and easy to apply criteria for good and
bad reduction. Normalized Belyi maps were already studied in the context of arithmetic dynamics in our previous paper \cite{WINE2}, where we proved a first result on the reduction of normalized Belyi maps (Proposition \ref{prop:Belyiredcrit}). In the current paper, we
exploit this approach more fully to study the Galois theory of the towers $(K_{n,a})_{n \geq 1}$ for normalized Belyi maps.  

Normalized Belyi maps form a very rich source of PCF maps.  The
techniques for studying their iterates presented here illustrate that
the whole class of maps may be analyzed by general methods, which
makes the class accessible for future applications in arithmetic
dynamics.  Just to mention one possible application: Remark
\ref{rem:tower} sketches how our results may be used to explicitly
construct infinite towers of number fields branched over an explicit
finite set of primes. Our knowledge on the reduction of a normalized
Belyi map yields a much smaller set containing the branched primes
than could be expected from previous work.

\bigskip
We now describe our results more precisely.  Our first main result
completely describes the geometric groups $G_{n, \QQb}$ as subgroups of
$\mathrm{Aut}(T_n)$ for any $n \geq 1$, excluding two exceptional
cases in small degree. The geometric group $G_{n, \QQb}$ is the Galois group of
$f^n:\PP^1_\QQb\to \PP^1_\QQb$ considered as map defined over $\QQb$.
 
\begin{introtheorem}[Theorem \ref{thm:G2alt}]\label{intro1}
Let $f$ be a normalized Belyi map and let $E_n \subseteq
\mathrm{Aut}(T_n)$ be the subgroup defined in Definition
\ref{def:En}. With two exceptions  the group
$G_{n,\QQb} $ is either isomorphic to $E_n$ or to the $n$-fold
iterated wreath product of $A_d$ with itself. The case distinction
only depends on the type of $f$ and is independent of $n$.
\end{introtheorem}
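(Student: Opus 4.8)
The plan is to argue by induction on $n$: the base case $n=1$ is a purely group-theoretic classification of the monodromy group from the type of $f$, and the inductive step is a computation of the kernel of the natural projection $G_{n,\QQb}\to G_{n-1,\QQb}$. The starting observation is that, since $0,1,\infty$ are fixed by $f$ and contain all critical values, the post-critical set of every iterate is again $\{0,1,\infty\}$; hence each $f^n\colon\PP^1_{\QQb}\to\PP^1_{\QQb}$ is a Belyi map branched over exactly these three points. I would first record the resulting self-similar (wreath) recursion: writing $\sigma_0,\sigma_1,\sigma_\infty\in S_d$ for the monodromy generators of $f$ around $0,1,\infty$ (with cycle types prescribed by the type and $\sigma_0\sigma_1\sigma_\infty=1$), the monodromy generators $\sigma_b^{(n)}\in\Aut(T_n)$ of $f^n$ are determined recursively from $\sigma_b^{(n-1)}$, the decisive feature being that the \emph{only} subtree on which $\sigma_b^{(n)}$ acts recursively, rather than being merely permuted, is the one lying over the fixed point $b$. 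Here $G_{n,\QQb}=\langle\sigma_0^{(n)},\sigma_1^{(n)},\sigma_\infty^{(n)}\rangle$.

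For $n=1$ I would determine $G_{1,\QQb}$ from the cycle types alone. A Riemann--Hurwitz count for a degree-$d$ genus-$0$ cover branched over three points gives $\sum_b(d-c_b)=2d-2$, where $c_b$ is the number of cycles of $\sigma_b$; this both constrains the admissible types and controls the signs $\sgn(\sigma_b)=(-1)^{d-c_b}$. Combining this parity information with the classification of transitive subgroups of $S_d$ generated by elements of prescribed cycle types (Jordan's theorem on primitive groups containing a short cycle, together with the primitivity forced by the ramification), I would show that for every admissible type $G_{1,\QQb}$ is either all of $A_d$ or the proper subgroup $E_1$ prescribed by the type (Definition \ref{def:En} with $n=1$), the two exceptional small-degree types being exactly those where this dichotomy fails. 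I would then dispose of the easy inclusion $G_{n,\QQb}\subseteq A_d\wr\cdots\wr A_d$ (respectively $G_{n,\QQb}\subseteq E_n$) by induction, checking that each $\sigma_b^{(n)}$ satisfies the defining constraints of the target group: in the alternating case that every $\sigma_b^{(n)}$ is an even permutation of the $d^n$ leaves, which follows from the recursion together with the level-$1$ parities; in the other case that the combinatorial constraints cutting out $E_n$ are preserved by the recursion.

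The reverse inclusion is the crux. I would use the short exact sequence
\[
1\longrightarrow K_n\longrightarrow G_{n,\QQb}\longrightarrow G_{n-1,\QQb}\longrightarrow 1,\qquad K_n=G_{n,\QQb}\cap\ker\bigl(\Aut(T_n)\to\Aut(T_{n-1})\bigr),
\]
together with the inductive hypothesis that the quotient is the full claimed group acting transitively on the $d^{n-1}$ vertices of level $n-1$ (transitivity being equivalent to connectedness of $f^{n-1}$). It then suffices to prove that the normal subgroup $K_n\le (A_d)^{d^{n-1}}$ (resp. $(E_1)^{d^{n-1}}$) is the entire base group. The decisive input is the production of a nontrivial element of $K_n$ supported on a single subtree: starting from the generator $\sigma_b^{(n)}$ around a fixed point $b$, whose recursion isolates the subtree over $b$, a suitable power and commutator with the remaining generators is designed to kill the activity on all but one coordinate while leaving there a full generator of $A_d$ (resp. $E_1$). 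Given such a single-coordinate element, simplicity of $A_d$ (for $d\ge 5$), the transitive conjugation action of $G_{n-1,\QQb}$ on the coordinates, and the normality of $K_n$ force $K_n=(A_d)^{d^{n-1}}$ by the standard fill-up lemma for normal subgroups of $A_d\wr Q$ with $Q$ transitive; the $E_1$ case is handled by the analogous argument adapted to the structure of $E_1$ encoded in Definition \ref{def:En}.

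I expect the main obstacle to be precisely this last step: locating, inside the group generated by the three recursively-defined permutations, an element of the kernel that is supported on a single level-$n$ subtree yet still acts there by a generator of the prescribed group. The self-similar recursion makes such an element plausible, but extracting it requires choosing a word in $\sigma_0^{(n)},\sigma_1^{(n)},\sigma_\infty^{(n)}$ whose support collapses onto the subtree over a fixed point, and the coprimality and parity conditions needed for this collapse to succeed are exactly what single out the two exceptional types, where no such localization exists and the inductive fill-up breaks down.
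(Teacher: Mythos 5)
Your architecture tracks the paper's quite closely at the start: the level-$1$ classification from cycle types and primitivity (the paper quotes Liu--Osserman, whose proof is exactly the Jordan/Williamson-style argument you sketch), the self-similar generating system with recursion only over the fixed points (Proposition \ref{prop:gensys}), the easy inclusion $G_{n,\QQb}\subseteq E_n$ via the generators (Corollary \ref{cor:GnEn}), and the key localization device --- a commutator of powers of the three generators, supported on a single level-$(n-1)$ subtree because the three supports meet in one letter --- which is precisely the paper's $\beta_n=[\alpha_{1,n},[\alpha_{2,n},\alpha_{3,n}]]$ in Proposition \ref{prop:betas}. In the alternating case (all $e_j$ odd) your plan is complete: there the kernel of $G_{n,\QQb}\to G_{n-1,\QQb}$ really is the full base group $(A_d)^{d^{n-1}}$, and localization plus simplicity of $A_d$ plus transitivity closes the induction, exactly as in the paper's Case 2.

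The gap is in the $S_d$ case, and it is structural, not a detail. You assert that it ``suffices to prove that the normal subgroup $K_n$ is the entire base group $(E_1)^{d^{n-1}}$'' and that localization leaves ``a full generator of $E_1$'' on one coordinate. Both claims fail: any element of $E_n$ supported on a single minimal subtree is necessarily \emph{even}, because the sign condition defining $E_n$ (Definition \ref{def:En}, applied down the tree) forces $\sgn(\rho^i)=1$ --- this is the paper's Lemma \ref{lem:Ni}, $N_{n,\QQb}^i\le A_d$. So no word in the generators can localize an odd permutation, and the fill-up target is unreachable; worse, it contradicts the inclusion $G_{n,\QQb}\subseteq E_n$ you already established, since full kernels at every level would give $|G_{n,\QQb}|=|\Aut(T_n)|$ while $[\Aut(T_n):E_n]=2^{d^{n-2}+\cdots+d+1}>1$. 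The true kernel has index exactly $2^{d^{n-2}}$ in $(S_d)^{d^{n-1}}$: it is cut out by requiring the product of signs over each block of $d$ coordinates sharing a level-$(n-2)$ parent to be trivial. Computing this is the actual crux, and it is the content of the paper's Lemma \ref{lem:conjH}: after the localization step gives $(A_d)^{d^{n-1}}\le K_n$, one passes to the image of $K_n$ under the coordinatewise sign map into $\FF_2^{d^{n-1}}$, feeds in the element $\alpha_{s,n}=g_{s,n}^{e_s^{n-1}}$ for an \emph{even} $e_s\neq d$ (whose image has $e_s^{n-1}$ nonzero entries distributed $e_s$ per block among $e_s^{n-2}$ blocks), and shows that the orbit of this vector under the transitive action of $G_{n-1,\QQb}\simeq E_{n-1}$ spans the intersection of the block hyperplanes; only then does a cardinality comparison with Lemma \ref{lem:En}(1) identify $G_{n,\QQb}$ with $E_n$. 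This $\FF_2$-linear step has no counterpart in your proposal, so as written your argument can only reach the iterated wreath product of $A_d$, never $E_n$. (Two smaller points: $E_1=\Aut(T_1)\simeq S_d$ is not a proper subgroup, so your level-$1$ dichotomy should read $S_d$ versus $A_d$; and the exceptional type $(6;4,4,5)$ fails already at level $1$, where $G_{1,\QQb}\simeq S_5$ embedded transitively in $S_6$, while $(4;3,3,3)$ is the case where the localized group is the Klein four-group rather than $A_4$ --- only the latter matches your heuristic that the localization breaks down.)
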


The two possibilities for $G_{n, \QQb}$ can be described totally
explicitly. For example, there is an easy formula for its index in
$\Aut(T_n)$ (Lemma \ref{lem:En}(1)). 

In general, the
Galois group $G_{n, \QQ}$ of $f^n$, considered as map defined over
$\QQ$, is strictly larger than $G_{n, \QQb}$. In Corollaries~\ref{cor:condsgn} and \ref{cor:descent} and Remark~\ref{rem:descent} we describe exactly when $G_{n,\QQ}=G_{n, \QQb}$ for all $n\geq
1$.

Hilbert's Irreducibility Theorem implies that $G_{n, a}=G_{n, \QQ}$
for $a$ in a dense open subset of $\PP^1_\QQ$. We give an explicit
criterion on $a$ and the type of the normalized dynamical Belyi map
$f$ that guarantees that $G_{n, a}=G_{n, \QQ}=G_{n, \QQb}=E_n$ for all $n\geq 1$:

\begin{introtheorem}[Theorem \ref{G2aG2}]\label{intro2}
Let $a \in \mathbb{P}^1(\QQ)\setminus \{0,1,\infty\}$ be chosen such
that Conditions \ref{conditions} are satisfied for some choice of distinct
rational primes $p, q_1, q_2, q_3$. Assume that $G_{n, \QQ}=G_{n,
  \QQb}$ for all~$n\geq~1$.  Then $G_{n,a} \simeq G_{n,\QQ}$ for all $n \geq
1$.
\end{introtheorem}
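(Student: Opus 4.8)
The plan is to prove $G_{n,a} \simeq G_{n,\QQ}$ by showing that specialization at $a$ does not collapse the Galois group, and the natural tool for this is local analysis at the primes $p, q_1, q_2, q_3$ of Conditions \ref{conditions}. The key observation is that the tower of fields $(K_{n,a})$ sits inside the tower $(K_{n,\QQ})$, so there is always a surjection $G_{n,\QQ} \twoheadrightarrow G_{n,a}$ coming from the specialization map; what must be ruled out is that this surjection has nontrivial kernel. Equivalently, I must show that the specialized extension $K_{n,a}/\QQ$ realizes the full group $E_n$.

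**Setting up the local argument.** First I would recall that the reduction behavior of $f$ is governed entirely by its combinatorial type (via Proposition \ref{prop:Belyiredcrit}), so the primes of good reduction and the ramification of the branch points $0,1,\infty$ can be read off combinatorially. The role of the primes $q_1,q_2,q_3$ in Conditions \ref{conditions} is presumably to control the ramification above the three branch points under specialization at $a$: each $q_i$ is chosen so that the reduction of $a$ meets the critical orbit structure at exactly one of $0,1,\infty$, producing prescribed inertia at that prime. The prime $p$ is then a prime of good reduction where one can read off, via the reduction $\bar f \pmod p$ and the description of $E_n$ as an iterated wreath-type subgroup, that the Frobenius (or inertia) at $p$ generates a large enough subgroup of $E_n$ to force surjectivity.

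**The main steps in order.** (1) Use the surjection $G_{n,\QQ} \twoheadrightarrow G_{n,a}$ and reduce the problem to showing that the inertia subgroups $I_{q_1}, I_{q_2}, I_{q_3}$ at $q_1,q_2,q_3$ in $G_{n,a}$ together generate a subgroup that maps onto $G_{n,\QQ} = E_n$. (2) Compute each local inertia image: because $q_i$ is chosen with a single prescribed ramification tied to one branch point, the inertia $I_{q_i}$ should be a product of cyclic groups matching the cycle structure dictated by the type at level $n$; this is where the \textbf{full understanding of the ramification structure of iterates} (advertised in the introduction as the key ingredient for Theorem \ref{G2aG2}) does the work, letting me compute inertia at each level of the tree recursively. (3) Invoke the explicit generation of $E_n$ by these inertia elements — here the iterated wreath product structure from Theorem \ref{thm:G2alt} is essential, since one shows by induction on $n$ that the level-$n$ generators lift compatibly through the tree, using that $E_n$ is generated by elements supported on the three branch cycles.

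**The main obstacle** will be step (2)–(3): controlling inertia at the \emph{iterated} level. At level $1$ the inertia images are immediate from the type of $f$, but under iteration the specialization $a$ interacts with the entire backward orbit $f^{-n}(a)$, and one must verify that the ramification at $q_i$ propagates correctly through all $n$ levels of the tree without unexpected collapse or extra ramification. This requires combining the reduction criterion (Proposition \ref{prop:Belyiredcrit}) with a careful inductive bookkeeping of how inertia generators sit inside the iterated wreath product, and checking that Conditions \ref{conditions} are exactly strong enough to guarantee the inertia images generate $E_n$ rather than a proper subgroup. The hypothesis $G_{n,\QQ}=G_{n,\QQb}$ is what lets me identify $G_{n,\QQ}$ with the explicit group $E_n$ throughout, so that the generation statement can be verified group-theoretically rather than abstractly.
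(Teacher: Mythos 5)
Your outline contains two substantive errors that would derail the proof. First, the comparison map goes the wrong way: there is no natural surjection $G_{n,\QQ}\twoheadrightarrow G_{n,a}$. What specialization gives is an \emph{inclusion} $G_{n,a}\hookrightarrow G_{n,\QQ}$ (the paper's \eqref{eq:Galin}), and this inclusion is only available once the numerator of $f^n-a$ is known to be irreducible. That irreducibility is precisely the job of the prime $p$: good monomial reduction at $p$ together with $\nu_p(a)=1$ makes the numerator of $f^n-a$ Eisenstein at $p$ (Proposition \ref{prop:Eisenstein}), whence $[K_{n,a}:\QQ]=d^n$ and $G_{n,a}$ acts transitively on the leaves. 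Your reading of $p$ as supplying a Frobenius/inertia element that ``forces surjectivity'' misses this; without the Eisenstein step you do not even have the inclusion, let alone transitivity. Given the inclusion, the entire content of the theorem is the lower bound $G_{n,\QQb}\subseteq G_{n,a}$ (Theorem \ref{G2aG2}), after which the hypothesis $G_{n,\QQ}=G_{n,\QQb}$ squeezes all three groups together (Corollary \ref{cor:index}); note also that $G_{n,\QQb}=E_n$ only when $G_{1,\QQb}\simeq S_d$, so identifying $G_{n,\QQ}$ with $E_n$ throughout is not available in the $A_d$ case.

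The deeper gap is your step (3): the inertia subgroups $I_{q_1},I_{q_2},I_{q_3}$ do \emph{not} generate $E_n$, and the paper never claims or needs this. Each $I_{q_j}$ is cyclic, generated by an element $h_{j,n}$ that Proposition \ref{prop:localEisenstein} shows is conjugate in $G_{n,a}$ to $g_{j,n}$ --- but by three unrelated conjugators, and with no product-one relation among the $h_{j,n}$ (in contrast to the geometric generating system, where $g_{1,n}g_{2,n}g_{3,n}=1$). Knowing only the three individual cycle types, one cannot conclude generation: already at level $1$, three cycles of lengths $e_1,e_2,e_3$ with $e_1+e_2+e_3=2d+1$ need not generate a transitive subgroup of $S_d$ (for type $(5;3,4,4)$ all three supports can lie in a common $4$-element subset). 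The paper's actual mechanism is quite different: at level $1$ it works with the full group $G_{1,a}$ (transitive by the Eisenstein step), proves it is \emph{primitive} by a case analysis on block sizes, and invokes Williamson's theorem to conclude $A_d\subseteq G_{1,a}$; at level $n$ it uses weak rigidity to align the $h_{j,1}$ so that $\supp(h_{1,1})\cap\supp(h_{2,1})\cap\supp(h_{3,1})$ is a single point, forms the commutator $\beta_{n,a}=[\alpha_{1,n,a},[\alpha_{2,n,a},\alpha_{3,n,a}]]$ of powers of the inertia elements so that it lands nontrivially in a single component $N_{n,a}^{i_0}$, deduces $N_{n,a}^{i}\simeq A_d$ from normality and simplicity of $A_d$, and then reruns the block-condition index count of Lemma \ref{lem:conjH} to get $N_{n,\QQb}\subseteq N_{n,a}$ and conclude by induction on $n$. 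Your proposed ``inductive bookkeeping of how inertia generators sit inside the iterated wreath product'' would have to be replaced by exactly this commutator-plus-counting argument; as written, the plan stalls at the unproved (and in general false) generation claim.
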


While the conditions we impose are somewhat involved, it is
not hard to find instances where all conditions are satisfied; Remark
\ref{rem:tower} gives a sample.

Theorem \ref{intro2} is a generalization of the main result (Theorem
1.1) of \cite{BFHJY}, in which the authors consider the special case
that $f(x)=-2x^3+3x^2$ is a concrete polynomial of degree $3$. Our Conditions
\ref{conditions} generalize Condition $(\dagger)$ in that paper. The
general strategy for proving our results is similar to that of
\cite{BFHJY}. However, the details of the proofs are quite different.
The authors of \cite{BFHJY} can rely on ad hoc arguments in terms of
the explicit polynomial $f(x)$ that do not extend
directly. In our much more general setting, we exploit the
Galois-theoretic properties of normalized Belyi maps and the
group-theoretic properties of arboreal Galois groups as iterated
wreath products instead.

Our last main result is an application to arithmetic dynamics.  Let
$f$ be a normalized Belyi map of degree $d \geq 3$.
For any $c \in \mathbb{P}^1(\mathbb{Q})$, we may construct the
\emph{dynamical sequence} $\{f^i(c)\}_{i \geq 0}$, where $f^0(c) = c$,
and study the primes dividing at least one non-zero finite term of the sequence. (Note that since normalized Belyi maps fix $0, 1, \infty$, the only interesting dynamical sequences are those avoiding these three points.)

\begin{introtheorem}[Corollary \ref{cor:seq}(2)]\label{intro3}
Let $K$ be the splitting field of $f$ and consider the non-zero preimages of zero under $f$, denoted $c_j \in K$. Suppose that for each $c_j$ the analogues of Conditions~\ref{conditions} are satisfied for distinct $K$-primes $\mathfrak{p}_j, \mathfrak{q}_{1,j}, \mathfrak{q}_{2,j}, \mathfrak{q}_{3,j}$.  
Define a dynamical sequence $\{b_i\}_{i
  \geq 0}$ by $b_0 \in \mathbb{Q}\setminus \{0,1\}$ and $b_i = f(b_{i-1})$ for $i \geq 1$. Then
the set of prime divisors of the entries of this sequence has natural density zero.
\end{introtheorem}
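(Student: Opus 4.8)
The plan is to convert the divisibility question into a statement about forward orbits modulo a prime, and then to bound the density of the relevant primes by a group-theoretic quantity which I will show tends to zero. Throughout I may assume the analogue of Theorem~\ref{intro2} for each $c_j$, so that the specialized groups over the points $c_j$ are as large as possible.

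First I would fix a prime $\mathfrak{p}$ of $K$ of good reduction lying outside a finite bad set (primes dividing $b_0$, primes of bad reduction of $f$ detected by Proposition~\ref{prop:Belyiredcrit}, and primes where the orbit degenerates). For such $\mathfrak{p}$, the prime $\mathfrak{p}$ divides some term $b_n$ with $n\ge 1$ if and only if the reduced forward orbit $\bar b_0,\bar b_1,\dots$ reaches $0$. Since $f$ fixes $0$, reaching $0$ forces the previous orbit point to be one of the \emph{nonzero} preimages, i.e.\ $\bar b_m \equiv c_j \pmod{\mathfrak{p}}$ for some $m\ge 0$ and some $j$. Writing $A_m = \{\mathfrak{p} : \bar b_m \equiv c_j \bmod \mathfrak{p} \text{ for some } j\}$, the set $S$ of prime divisors of the sequence equals $\bigcup_{m\ge 0} A_m$ up to the finite bad set.

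Next I would split off an initial segment. For each fixed $m$, the set $A_m$ consists of the prime divisors of the \emph{fixed} nonzero element $\prod_j(b_m - c_j)\in K$, hence is finite; so $\bigcup_{m\le N} A_m$ is finite and has density $0$ for every cutoff $N$. It remains to bound the tail. If $\bar b_m\equiv c_j\pmod{\mathfrak{p}}$ with $m>N$, then $\bar b_{m-N}$ is an $\FF_{\mathfrak{p}}$-rational point of $f^{-N}(c_j)$ modulo $\mathfrak{p}$: it is the reduction of the rational number $b_{m-N}$ and maps to $\bar c_j$ after $N$ applications of $\bar f$. Hence $\bigcup_{m>N} A_m \subseteq B_N$, where $B_N$ is the set of primes for which $\overline{f^N(X)-c_j}$ acquires a root in the residue field for some $j$. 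Because each $c_j\notin\{0,1,\infty\}$ is a \emph{regular} value of $f$, the preimage tree over $c_j$ is the regular $d$-ary tree and the arboreal description applies. By Chebotarev, $\overline{d}(B_N)\le \sum_j \Pr[\sigma\in G_{N,c_j} \text{ fixes a level-}N \text{ vertex}]$, the proportion of elements with a fixed leaf. Here the hypothesis that the analogues of Conditions~\ref{conditions} hold for each $c_j$ is essential: the analogue of Theorem~\ref{intro2}, via Theorem~\ref{thm:G2alt}, identifies $G_{N,c_j}$ with the full group $E_N$ (or the $N$-fold iterated wreath product of $A_d$) for every $N$, so this proportion is a universal quantity depending only on the type of $f$.

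The heart of the argument, and the step I expect to be the main obstacle, is to show that $q_N := \Pr[\sigma\in E_N \text{ fixes some level-}N \text{ leaf}]\to 0$. I would set this up recursively from the iterated-wreath-product description of $E_N$: conditioning on the top-level permutation $\tau$ and its number of fixed points $c_1(\tau)$ gives $1-q_N = \mathbb{E}_\tau\big[(1-q_{N-1})^{c_1(\tau)}\big]=\phi(1-q_{N-1})$, where $\phi(x)=\mathbb{E}_\tau[x^{c_1(\tau)}]$ is the fixed-point generating function of the top-level group. Since the expected number of top-level fixed points equals $1$, we have $\phi(1)=1$, $\phi'(1)=1$, and $\phi$ convex on $[0,1]$, which forces $\phi(x)\ge x$ and hence $r_N:=1-q_N$ to increase to the fixed point $1$; thus $q_N\to 0$. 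Combining, for every $N$ we obtain $\overline{d}(S)\le d\big(\bigcup_{m\le N}A_m\big)+\overline{d}(B_N)=\overline{d}(B_N)\to 0$, so $S$ has natural density zero. The delicate point is precisely this last decay for $E_N$: unlike the full wreath product of $S_d$, the group $E_N$ is a proper subgroup cut out by the combinatorial type, so I must verify that its top-level projection still satisfies the derivative and convexity conditions on $\phi$ (treating the $A_d$-wreath case and the two exceptional small-degree types of Theorem~\ref{thm:G2alt} separately); a secondary technicality is making the reduction in the first paragraph uniform and pinning down the finite excluded set of primes.
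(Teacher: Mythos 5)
Your proposal is correct and follows essentially the same route as the paper: the reduction $b_i \equiv 0 \pmod{\mathfrak{p}} \Rightarrow b_{i-1} \equiv c_j \pmod{\mathfrak{p}}$, Chebotarev over $K$ applied to the specialized groups $G_{N,c_j,K}\simeq E_N$, and the decay of the leaf-fixing proportion via iterating a fixed-point comparison function --- indeed your $\phi(x)=\mathbb{E}_\tau\bigl[x^{c_1(\tau)}\bigr]$ is exactly the paper's function in Theorem~\ref{EnfixEn} after the substitution $x\mapsto 1-x$, and the sign-constraint issue you flag for $E_N$ is handled there precisely as you anticipate, by replacing the exact recursion with the inequality $x_{n+1}\le\phi(x_n)$ obtained from inclusion--exclusion. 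The only cosmetic difference is that you deduce uniqueness of the fixed point from Burnside ($\phi'(1)=1$) plus convexity, where the paper verifies directly that $y=0$ is the only solution of $\phi(y)=y$ on $[0,1]$.
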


\noindent {\bfseries Outline of the paper.}  In Section
\ref{sec:belyi}, we introduce normalized (dynamical) Belyi maps and
formulate known facts on their Galois groups. We also recall some
results from \cite{WINE2} on reduction of normalized Belyi maps.  In
Section \ref{sec:arbo} we consider the automorphism group
$\mathrm{Aut}(T_\infty)$ of an infinite regular tree. We prove our
first main result, Theorem \ref{thm:G2alt}, which describes the groups
$G_{n, \QQb}$  as subgroups of $\mathrm{Aut}(T_n)$, and prove Corollary~\ref{cor:condsgn}, which compares $G_{n, \QQ}$ with $G_{n,\QQb}$.

Section \ref{sec:spec} treats the specializations $G_{n,a}$. Our
second main result, Theorem \ref{G2aG2}, proves sufficient conditions
guaranteeing that $G_{n,a} \simeq G_{n, \QQb}$ for all $n \geq 1$.  In
Section \ref{sec:app}, we show in Theorem \ref{EnfixEn} that  
the proportion of elements of $G_n$ (and hence of
$G_{n,a}$, when Conditions \ref{conditions} are satisfied) that fix a
leaf tends to zero as $n$ tends to infinity. This is applied to derive
consequences for prime divisors of dynamical sequences in Corollary
\ref{cor:seq}.

\bigskip\noindent {\bfseries Acknowledgments.}  This project started
at the American Institute of Mathematics as part of an AIM SQuaRE on
dynamical Belyi maps.  We thank AIM for providing a stimulating
environment, and the other participants Jacqueline Anderson, Neslihan
Girgin, and Michelle Manes for many useful discussions. Our previous
paper \cite{WINE2}, which evolved from work done at  the
conference \textit{Women in Numbers Europe 2},  is a collaboration with them.  
We are grateful to Alexander Hulpke for his help with some group-theoretic
computations.  We gratefully acknowledge the support of the
Mathematisches Forschungsinstitut Oberwolfach, where part of the work
was carried out. The second and third authors were supported by an AMS -- Simons Travel Grant. 
We thank the anonymous referees for their careful reading and their helpful suggestions which greatly improved the exposition.

\section{Dynamical Belyi maps}\label{sec:belyi}

\subsection{Normalized Belyi maps of type $(d; e_1, e_2, e_3)$}\label{sec:Belyidef}

In this section we introduce the class of Belyi maps that we will study in this paper. Recall that a \textit{Belyi map} is a finite cover $f: X \to \mathbb{P}^1$ of smooth projective curves over $\mathbb{C}$ that is branched exactly over $x_1=0$, $x_2=1$, and $x_3=\infty$. 
A Belyi map has genus zero if $X$ has genus $g(X)=0$. 

\begin{remark}
A \emph{dynamical Belyi map} is a genus-$0$ Belyi map $f$ such
that $f(\{0,1,\infty\}) \subseteq \{0,1,\infty\}$.  This notion was
introduced in \cite{Zvonkin}.  In this case the $n$th iterate of $f$,
which we denote by $f^n$, is also a Belyi map of genus zero. All dynamical Belyi
maps are \emph{post-critically finite}; a map $f : \mathbb{P}^1
\to \mathbb{P}^1$ is post-critically finite (PCF) if each of
its ramification points has finite forward orbit.
\end{remark}

\begin{definition}\label{def:type}\mbox{}
\begin{enumerate}
\item A Belyi map $f$ is called \emph{single cycle} if there is a
  unique ramification point over each of the three branch points.
\item A single-cycle genus-$0$ Belyi map is called \emph{normalized}
  if its ramification points are $0$, $1$, $\infty$, and moreover
  $f(0) = 0$, $f(1) = 1$, and $f(\infty) = \infty$.
\item The \emph{(combinatorial) type} of a single-cycle genus-$0$
  Belyi map $f$ is the tuple $(d; e_1, e_2, e_3)$, where $d=\deg(f)$
  and $e_i$ is the ramification index of the unique ramification point
  above $x_i$.
\end{enumerate}
\end{definition}

Note that normalized (single-cycle genus-$0$) Belyi maps $f$ are
dynamical Belyi maps, and hence PCF maps. They even satisfy the stronger condition of
being \emph{conservative}, which means that they are PCF maps such
that each of their ramification points is a fixed point.

The Riemann--Hurwitz formula implies that the type of a single-cycle
genus-0 Belyi map of type $(d; e_1, e_2, e_3)$   satisfies
\begin{equation}\label{eq:genus0}
2d+1=e_1+e_2+e_3.
\end{equation}
A Belyi map of type $(d; e_1, e_2, e_3)$ is automatically single cycle. All Belyi maps considered in this paper are assumed to be normalized (single-cycle genus-$0$ dynamical) Belyi maps from now on.

To exclude trivial cases we always assume that all normalized Belyi
maps of type $(d; e_1, e_2, e_3)$ have exactly three ramification
points, i.e.,$e_i>1$ for all $1 \leq i \leq 3$.  For simplicity we will moreover
always assume that $e_1\leq e_2\leq e_3$.  Permuting the $e_i$
corresponds to changing coordinates on both projective lines
simultaneously, therefore these inequalities pose no restriction.  An
\textit{abstract type} is a tuple $(d; e_1, e_2, e_3)$ such that
\[
2\leq e_1\leq e_2\leq e_3\leq d
\]
 and such that (\ref{eq:genus0}) holds.  

The following result is classical; a proof can be found in
\cite[Proposition 1]{WINE2}. 
Note that the normalization condition completely fixes the coordinates on both projective lines.

\begin{proposition}\label{prop:rigid}
For each abstract type $\underline{C}:=(d; e_1, e_2, e_3)$ there is a
unique normalized Belyi map $f$ of type $\underline{C}$. Moreover,
this Belyi map is defined over $\QQ$.
\end{proposition}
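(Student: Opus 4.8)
The plan is to reduce the statement to a combinatorial rigidity assertion via Riemann's existence theorem and then to descend the resulting cover to $\QQ$. By Riemann's existence theorem, degree-$d$ covers $f\colon \PP^1\to\PP^1$ branched exactly over $0,1,\infty$ correspond, up to isomorphism, to transitive triples $(\sigma_1,\sigma_2,\sigma_3)$ in $S_d$ with $\sigma_1\sigma_2\sigma_3=1$, taken up to simultaneous conjugation, where the cycle type of $\sigma_i$ records the ramification over $x_i$. The single-cycle condition forces each $\sigma_i$ to be a single $e_i$-cycle together with $d-e_i$ fixed points, and \eqref{eq:genus0} is precisely the Riemann--Hurwitz identity $\sum_i(e_i-1)=2d-2$ that makes such a triple compatible with $g(X)=0$. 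The normalization --- placing the three ramification points at $0,1,\infty$ and demanding that they be fixed --- rigidifies the coordinates on source and target, so that exhibiting a unique normalized $f$ amounts to showing that this Nielsen class consists of a single conjugacy orbit.

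The heart of the matter is thus the existence and uniqueness, up to simultaneous conjugation, of such a triple. Existence is the easier half: a transitive factorization of an $e_3$-cycle as a product of an $e_1$-cycle and an $e_2$-cycle can be written down explicitly whenever \eqref{eq:genus0} holds. For uniqueness --- the main obstacle --- I see two routes. One is to compute the class-algebra constant $\tfrac{|C_1|\,|C_2|}{|S_d|}\sum_{\chi}\chi(C_1)\chi(C_2)\chi(C_3)/\chi(1)$ over the single-cycle classes $C_i$ and, after discarding the intransitive solutions, to check that the transitive ones form a single $S_d$-orbit. The other, more in keeping with the explicit spirit of the paper, rests on a Wronskian identity: writing $f=N/D$ in lowest terms with $\deg N=d$ and $\deg D=d-e_3$, the polynomial $W:=N'D-ND'$ has degree exactly $2d-1-e_3$, and because $f$ is unramified away from $0,1,\infty$, $W$ vanishes only at $0$ (to order $e_1-1$) and at $1$ (to order $e_2-1$). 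Since $(e_1-1)+(e_2-1)=2d-1-e_3=\deg W$, this forces
\[
N'D-ND'=c\,x^{e_1-1}(x-1)^{e_2-1},\qquad c\neq 0.
\]
Together with the prescribed vanishing orders ($x^{e_1}\mid N$ and $(x-1)^{e_2}\mid N-D$), this rigid relation should pin down $(N,D)$ up to a common scalar, giving uniqueness of $f$. I expect the genuinely nontrivial point, in either route, to be verifying that the solution set really is a single orbit (equivalently, that the constrained system for $(N,D)$ has a one-dimensional solution space) rather than a formal consequence of the degree bookkeeping.

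Finally I would descend to $\QQ$. Each $C_i$ has cycle type $(e_i,1^{d-e_i})$, and every conjugacy class of $S_d$ is rational; since the branch points $0,1,\infty$ are $\QQ$-rational, $\Gal(\QQb/\QQ)$ preserves the whole ramification datum and therefore permutes the covers in our Nielsen class. By rigidity there is only one, so it is Galois-stable and its field of moduli is $\QQ$. Moreover a normalized Belyi map has no nontrivial automorphisms: any automorphism of the cover must fix the unique ramification point over each of $0,1,\infty$, hence fix three points of $X\cong\PP^1$ and be the identity. As the object is rigidified in this sense, its field of moduli is a field of definition, so $f$ descends to $\QQ$. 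The Wronskian route reaches the same conclusion at once: the relation above and the prescribed vanishing orders cut out $(N,D)$ by a system defined over $\QQ$ with a unique solution up to scaling, which is therefore $\Gal(\QQb/\QQ)$-fixed and lies in $\QQ[x]$.
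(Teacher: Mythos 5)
Your overall skeleton (Riemann existence $+$ rigidity $+$ descent) matches the classical argument, and your descent paragraph is complete and correct: the normalized map has trivial automorphism group because any automorphism of the cover must fix the unique ramification point over each of $0,1,\infty$, hence fix three points of $\PP^1$, so the field of moduli is a field of definition, and Galois-stability of the ramification datum puts it equal to $\QQ$. The genuine gap is that the proposition's actual content --- uniqueness of the cover, i.e.\ that the single-cycle Nielsen class is a single orbit --- is exactly the step you leave open in both of your routes (``should pin down'', ``I expect the genuinely nontrivial point\dots''). Your first route does not close as sketched: the class-algebra constant counts \emph{all} factorizations, including intransitive ones and ones generating proper transitive subgroups of $S_d$ (this genuinely occurs: for type $(4;3,3,3)$ one has $G(f)\simeq A_4$, for $(6;4,4,5)$ one has $G(f)\simeq S_5$ embedded transitively in $S_6$, and for all $e_i$ odd $G(f)\simeq A_d$), and converting a count of triples into ``one $S_d$-orbit'' requires knowing the centralizer of each triple, which is essentially the statement being proved. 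This uniqueness is Liu--Osserman's weak rigidity theorem, proved by quite different (degeneration/inductive) methods; the paper itself disposes of the whole proposition by citing \cite[Proposition 1]{WINE2}, whose group-theoretic input is that result (cf.\ Lemma \ref{lem:gensys}).

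Your Wronskian route, by contrast, \emph{can} be completed, and more cheaply than you anticipate: the Wronskian is not needed for uniqueness at all. The conditions $\deg N\le d$, $\deg D\le d-e_3$, $x^{e_1}\mid N$, $(x-1)^{e_2}\mid N-D$ are linear in the coefficients of $(N,D)$, with solution space of dimension at least $(d-e_1+1)+(d-e_3+1)-e_2=1$ by \eqref{eq:genus0}. If $(N_1,D_1)$ and $(N_2,D_2)$ are two nonzero solutions, set $E:=N_1D_2-N_2D_1$; then $x^{e_1}\mid E$, and since $N_i\equiv D_i \pmod{(x-1)^{e_2}}$ also $(x-1)^{e_2}\mid E$, while $\deg E\le 2d-e_3<e_1+e_2=2d+1-e_3$, forcing $E=0$. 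Hence the solution space is exactly one-dimensional: existence and uniqueness of the rational function at once, and over $\QQ$, since the linear system has rational coefficients and its solution line is therefore Galois-stable --- which is your closing observation, now justified. What your Wronskian identity is actually needed for is the converse bookkeeping you glossed over: that the unique solution has $\gcd(N,D)=1$, exact degrees, and no ramification beyond the prescribed one. This follows from $N'D-ND'=c\,x^{e_1-1}(x-1)^{e_2-1}$ (note the leading coefficient is $e_3\,\ell(N)\ell(D)\neq 0$, so the degree is exactly $2d-1-e_3$; your claim that $W$ vanishes only at $0$ and $1$ also uses that all finite poles are simple, where $W$ is visibly nonzero) together with Riemann--Hurwitz. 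With that degree-count inserted, your second route becomes a complete, self-contained, and arguably more elementary proof than the citation route the paper takes.
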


Proposition \ref{prop:rigid} implies that a normalized Belyi map $f$ is 
completely determined by its type.  All proofs in this paper only
depend on the type of~$f$ and do not use the explicit equation for
$f$.  However, it is not too difficult to explicitly calculate the
normalized Belyi map of a given type, as the following result
illustrates. 

\begin{proposition}\label{prop:Belyifam} Let $d \geq 3$ and $k \geq 1$.
\begin{enumerate}
\item The unique normalized Belyi map of type $(d; d-k, k+1, d)$ is
\[
f(x)=cx^{d-k} (a_0x^k+\cdots +a_{k-1}x+a_k),
\]
where 
\[
a_i:= \frac{(-1)^{k-i}}{(d-i)} \binom{k}{i} \text{ and } 
c=\frac{1}{k!} \prod_{ j=0}^{k}(d-j).
\] 
\item  
The unique normalized Belyi map of  type $(d; d-k, 2k+1, d-k)$ is
\[
f(x)=x^{d-k}\left( \frac{a_0x^k-a_1x^{k-1}+\cdots +(-1)^ka_k}{(-1)^k
  a_k x^k +\cdots -a_1x+ a_0}\right),
\]
where
\[ 
a_i:=\binom{k}{i}\prod_{k+i+1\leq j \leq 2k} (d-j) \prod_{0\leq j \leq i-1}(d-j) =
k!\binom{d}{i}\binom{d-k-i-1}{k-i}.
\] 
\end{enumerate}
\end{proposition}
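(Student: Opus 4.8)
The plan is to exploit the rigidity of Proposition \ref{prop:rigid}: since there is a \emph{unique} normalized Belyi map of each abstract type, it suffices to check that each explicit $f$ in the statement is a degree-$d$ rational map which (i) satisfies $f(0)=0$, $f(1)=1$, $f(\infty)=\infty$, and (ii) is branched only over $\{0,1,\infty\}$, with a single ramification point of the prescribed index over each branch point. I would verify (ii) by locating all critical points of $f$, i.e.\ by computing the divisor of zeros of $f'$ together with the behaviour at the poles, and checking that every critical value lands in $\{0,1,\infty\}$. Given (i), (ii), and the indices, the Riemann--Hurwitz balance \eqref{eq:genus0} is automatic, and uniqueness then forces $f$ to be the asserted map.

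For part (1), $f$ is a polynomial, so $\infty$ is totally ramified with $f(\infty)=\infty$ (giving $e_3=d$), and $f(0)=0$ with a zero of order $d-k$ (giving $e_1=d-k$). Writing $f=c\,x^{d-k}P$ with $P=\sum_{i=0}^{k}a_ix^{k-i}$, one computes $f'=c\,x^{d-k-1}\bigl((d-k)P+xP'\bigr)$; since $a_i(d-i)=(-1)^{k-i}\binom{k}{i}$, the bracket collapses to the binomial sum $\sum_{i=0}^{k}(-1)^{k-i}\binom{k}{i}x^{k-i}=(1-x)^k$, whence
\[
f'(x)=c\,x^{d-k-1}(1-x)^k .
\]
Thus the only affine critical points are $x=0$ (order $d-k-1$) and $x=1$ (order $k$); with the pole at $\infty$ this shows $f$ is branched exactly over $\{0,1,\infty\}$ with a single ramified point over each, of indices $d-k$, $k+1$, $d$. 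It remains to check $f(1)=1$. Viewing $f$ as the antiderivative $c\int_0^x t^{d-k-1}(1-t)^k\,dt$ (legitimate since $f(0)=0$), this is a Beta evaluation: $\int_0^1 t^{d-k-1}(1-t)^k\,dt=B(d-k,k+1)=k!\,(d-k-1)!/d!=1/c$. The only inputs are therefore one binomial identity and one Beta value.

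Part (2) follows the same strategy but the combinatorics is heavier. Writing $f=x^{d-k}N/D$ with $N,D$ the two reciprocal degree-$k$ polynomials of the statement, one checks directly that $\deg f=d$, that $f(0)=0$, and that $f$ has a pole of order $d-k$ at $\infty$ with $f(\infty)=\infty$; the palindromic relation $D(x)=x^kN(1/x)$ moreover yields the symmetry $f(1/x)=1/f(x)$, which swaps $0$ and $\infty$ and fixes $1$. A logarithmic-derivative computation shows that the numerator of $f'$ equals $x^{d-k-1}V(x)$, where
\[
V(x):=(d-k)\,N(x)D(x)+x\bigl(N'(x)D(x)-N(x)D'(x)\bigr).
\]
Everything thus reduces to the polynomial identity $V(x)=C\,(1-x)^{2k}$ with $C\neq 0$. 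Two remarks make this manageable. First, $\deg V\le 2k$ because the top-degree term of the Wronskian $N'D-ND'$ cancels, and the leading coefficient of $V$ is $(d-k)(-1)^k a_0a_k\neq 0$; so it suffices to prove $(x-1)^{2k}\mid V$. Second, using $f-1=(x^{d-k}N-D)/D$ and $D(1)\neq 0$, this divisibility is equivalent to $(x-1)^{2k+1}\mid M$ for $M(x)=x^{d-k}N(x)-D(x)=\sum_{i=0}^k(-1)^i a_i\,(x^{d-i}-x^i)$. Once established, $V=C(1-x)^{2k}$ gives $f'(x)=C\,x^{d-k-1}(1-x)^{2k}/D(x)^2$, so that the only affine critical points are $x=0$ (order $d-k-1$) and $x=1$ (order $2k$), which together with the pole at $\infty$ and $f(0)=0$, $f(1)=1$ exhibits $f$ as a normalized Belyi map of type $(d;d-k,2k+1,d-k)$; uniqueness then finishes the proof.

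The hard part will be the divisibility $(x-1)^{2k+1}\mid M$. Expanding $M$ about $x=1$, the vanishing of the coefficients of $(x-1)^m$ for $0\le m\le 2k-1$ amounts to the family of binomial identities $\sum_{i=0}^k(-1)^i a_i\bigl(\binom{d-i}{m}-\binom{i}{m}\bigr)=0$, with $a_i=k!\binom{d}{i}\binom{d-k-i-1}{k-i}$. I would prove these by recognizing the sums as terminating hypergeometric series and applying a Chu--Vandermonde or Pfaff--Saalschütz evaluation, and, failing a clean closed form, by running Zeilberger's algorithm to produce a certificate. Here the reciprocal symmetry is a genuine shortcut: the relation $f(1/x)=1/f(x)$ translates into the anti-palindromy $M(x)=-x^dM(1/x)$, which forces the order of vanishing of $M$ at the fixed point $x=1$ to be \emph{odd}; consequently it is enough to prove $(x-1)^{2k}\mid M$ (the $2k$ conditions $0\le m\le 2k-1$), and oddness then upgrades this automatically to $(x-1)^{2k+1}\mid M$. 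Finally, $f(1)=1$ is immediate from $M(1)=0$.
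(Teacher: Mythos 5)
Your verification-plus-rigidity strategy is the right one: the paper's own proof consists only of the citation to \cite[Propositions 2 and 3]{WINE2}, so a self-contained argument must do exactly what you outline, namely check that each explicit map is a degree-$d$ map fixing $0,1,\infty$ with a single ramification point of the prescribed index over each, and then invoke the uniqueness of Proposition \ref{prop:rigid}. Part (1) is complete and correct: $(d-k)P(x)+xP'(x)=\sum_{i=0}^{k}a_i(d-i)\,x^{k-i}=(1-x)^k$ does give $f'(x)=c\,x^{d-k-1}(1-x)^k$, so the only critical points are $0,1,\infty$ with the right indices, and the Beta value $\int_0^1 t^{d-k-1}(1-t)^k\,dt=(d-k-1)!\,k!/d!=1/c$ yields $f(1)=1$. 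In part (2) your structural reductions are also sound: the cancellation of the top Wronskian term, bounding $\deg V\le 2k$ with leading coefficient $(d-k)(-1)^k a_0a_k$; the relation $x^{d-k-1}V=M'D-MD'$ tying $V$ to $M=x^{d-k}N-D$; and the anti-palindromy $M(x)=-x^dM(1/x)$, which (writing $M=(x-1)^m g$ with $g(1)\neq 0$ and evaluating at $x=1$) forces $(-1)^m=-1$, so $m$ is odd and $(x-1)^{2k}\mid M$ legitimately upgrades to $(x-1)^{2k+1}\mid M$.

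However, part (2) has a genuine gap at its computational heart: the $2k$ identities $\sum_{i=0}^{k}(-1)^i a_i\bigl(\binom{d-i}{m}-\binom{i}{m}\bigr)=0$ for $0\le m\le 2k-1$ are the \emph{only} place where the specific coefficients $a_i=k!\binom{d}{i}\binom{d-k-i-1}{k-i}$ enter --- everything preceding them is formal and would hold for any palindromic pair $(N,D)$ --- and you do not prove them; ``recognize as Pfaff--Saalsch\"utz, else run Zeilberger'' is a plan, not a proof, and until a certificate or evaluation is actually produced the claimed type $(d;d-k,2k+1,d-k)$ is unestablished. Two smaller omissions feed into the same step. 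You assert $D(1)\neq 0$ without argument; it is needed for $f(1)=1$, for the exact ramification index $2k+1$ at $x=1$, and for your passage between $(x-1)^{2k}\mid V$ and $(x-1)^{2k+1}\mid M$ (note that $M(1)=N(1)-D(1)=0$ holds automatically by the palindromy, which is what makes that passage legitimate once $m\ge 1$). And you never check that $x^{d-k}N$ and $D$ are coprime, which is required both for $\deg f=d$ and for reading the ramification of $f$ off the numerator $x^{d-k-1}V$ of $f'$. Both are repairable: a common root $r$ of $N$ and $D$ satisfies $V(r)=0$, so once $V=C(1-x)^{2k}$ is known only $r=1$ is possible, and that is excluded by the single alternating-sum computation $D(1)\neq 0$ (e.g.\ $D(1)=-2$ for $k=1$ and $12$ for $k=2$). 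So the architecture is correct, and the odd-order symmetry trick is a genuine economy over brute force, but as written the proof of part (2) is incomplete until the binomial identities and $D(1)\neq 0$ are established.
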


\begin{proof} This is \cite[Propositions 2 and
  3]{WINE2}. \end{proof}

Let $f$ be a normalized Belyi map of type $(d; e_1, e_2,
e_3)$. Associated with $f$ is a \textit{generating system} $(g_1, g_2,
g_3)$, which describes the quotient of the topological fundamental
group $\pi_1(\PP^1\setminus\{0,1,\infty\}, \ast)$ corresponding to
$f$.  In our situation, a generating system consists of three
permutations $g_i\in S_d$ for $1 \leq i \leq 3$, where $g_i$ is a single cycle of length
$e_i$, that satisfy the relation $g_1g_2g_3=1$.  This motivates the
terminology \textit{single cycle}. The generating system for $f$ is
unique up to simultaneous conjugacy by elements of $S_d$.  The group
$G(f):=\langle g_1, g_2, g_3\rangle$ is the Galois group of the Galois
closure of the cover $f:\PP^1_{\overline{\QQ}}\to \PP^1_{\overline{\QQ}}$, 
i.e., the Galois group of the splitting field of $f(x)-t$ over $\overline{\QQ}(t)$. 
Liu--Osserman (\cite[Lemma
  2.1]{liuosserman} ) show that the triple $(g_1, g_2, g_3)$ is
\textit{weakly rigid} in the sense that it is unique up to uniform
conjugacy in $S_d$. However, in the case that $G(f)\subsetneq S_d$ the
generating system is not unique up to uniform conjugacy in $G(f)$,
i.e., the triple is not rigid. In this case $G(f)$ is not necessarily
the Galois group of the cover $f:\PP^1_{\QQ}\to \PP^1_{\QQ}$, even though the map $f$ is defined
over $\QQ$. We
discuss this phenomenon in more detail in Section \ref{sec:descent}.
We refer to \cite[Chapter 2]{volklein} for a general introduction to
this topic.

\begin{lemma}\label{lem:gensys}
Let $f$ be a normalized Belyi map of type $\underline{C}=(d; e_1, e_2, e_3)$.   
\begin{enumerate}
\item Assume  $\underline{C} \neq (6; 4,4,5)$. Then the Galois group $G(f)$ of the Galois closure of $f:\PP^1_{\overline{\QQ}}\to \PP^1_{\overline{\QQ}}$ is isomorphic to $S_d$ if at least one $e_i$ is even, and to $A_d$ otherwise. 
\item Define
\begin{equation}\label{eq:gensysn=1}
\begin{split}
g_1 & =  (d,d-1,\ldots,e_3,1,2, \ldots, d-e_2-1,d-e_2),\\
 g_2 & =  (d-e_2+1, d-e_2+2, \ldots, d-1,d), \\
 g_3 & =  (e_3,e_3-1,\ldots, 2,1).
 \end{split}
 \end{equation}
Then $(g_1, g_2, g_3)$ is a generating system for $f$.
\end{enumerate}
\end{lemma}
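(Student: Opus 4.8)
For part (2), the plan is to verify directly that the explicit triple \eqref{eq:gensysn=1} has the combinatorial data required of a generating system for $f$, and then to invoke weak rigidity. First I would read off the cycle lengths: $g_2$ and $g_3$ are visibly single cycles on $e_2$ and $e_3$ points respectively, while for $g_1$ I would count its support $\{e_3, \dots, d\} \cup \{1, \dots, d-e_2\}$, which has $(d-e_3+1)+(d-e_2) = 2d+1-e_2-e_3 = e_1$ elements by the genus-$0$ relation \eqref{eq:genus0}; the same relation forces $e_2+e_3 = 2d+1-e_1 > d$ (as $e_1 \le d$), i.e. $d-e_2 < e_3$, so the two arcs are disjoint and $g_1$ is indeed a single $e_1$-cycle. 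Next I would observe that $\langle g_2, g_3\rangle$ is already transitive: since $d-e_2+1 \le e_3$, the supports $\{d-e_2+1, \dots, d\}$ and $\{1, \dots, e_3\}$ of $g_2$ and $g_3$ overlap and together cover $\{1, \dots, d\}$. Finally I would confirm $g_1 g_2 g_3 = 1$ by a direct computation, tracking the image of an arbitrary point through the three cycles in the few cases according to the arc in which it lies. A transitive triple of single cycles of type $(e_1, e_2, e_3)$ with product $1$ is precisely the data of a generating system of a single-cycle genus-$0$ cover of this type; by the weak rigidity of Liu--Osserman (\cite{liuosserman}) such a triple is unique up to uniform conjugacy in $S_d$, so ours is simultaneously conjugate to the generating system of $f$ and hence is itself one.

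For part (1), write $G = G(f) = \langle g_1, g_2, g_3\rangle \le S_d$, which is transitive by part (2). I would first dispose of the $S_d$-versus-$A_d$ dichotomy by a sign count, reducing the whole statement to showing $A_d \subseteq G$. Since $\sgn(g_i) = (-1)^{e_i-1}$ and $g_1 g_2 g_3 = 1$, an even number of the $e_i$ are even; as their sum $2d+1$ is odd, this means either all three $e_i$ are odd or exactly two are even. In the former case every $g_i$ is even, so $G \subseteq A_d$; in the latter $G$ contains an odd permutation, so $G \not\subseteq A_d$. Thus once $A_d \subseteq G$ is established, $G$ is forced to be $A_d$ when all $e_i$ are odd and $S_d$ when some $e_i$ is even, which is exactly the claim.

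The substance of the argument is therefore to prove $A_d \subseteq G$ outside the stated exception, and my plan is to show that $G$ is primitive and then feed a suitable cycle into the classification of primitive permutation groups containing a cycle. For primitivity I would analyze a hypothetical block system with blocks of size $s$, $1<s<d$: a single cycle preserving the blocks acts on the blocks meeting its support as a single cycle and contributes the same number of support points to each such block. I would exploit the largeness of the cycle lengths --- note $e_3 \ge (2d+1)/3 > d/2$ --- together with the explicit, mutually overlapping supports of $g_1, g_2, g_3$ to rule out any nontrivial block structure. I expect this primitivity step to be the main obstacle, since one large cycle alone does not force primitivity and one genuinely has to use the interaction of all three generators and the relation $g_1g_2g_3=1$.

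Granting primitivity, I would apply the classification of primitive groups containing a cycle fixing at least two points (Jordan, Marggraf, and in complete form G.\ Jones). The natural cycle to use is $g_1$: it fixes $d-e_1$ points, and $e_1 \le (2d+1)/3$ gives $d-e_1 \ge (d-1)/3 \ge 2$ as soon as $d \ge 7$. For such $d$ the classification yields $A_d \subseteq G$ apart from a short explicit list of exceptional primitive groups, each of which I would eliminate by checking that it contains no transitive triple of single cycles of type $(e_1,e_2,e_3)$ with product $1$ compatible with \eqref{eq:genus0}. The finitely many remaining types with $d \le 6$ I would settle by direct computation, and this is exactly where the single genuine exception $(6;4,4,5)$ emerges, its group being $\PGL_2(5) \cong S_5$ acting on the six points of $\PP^1(\FF_5)$. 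Combining $A_d \subseteq G$ with the sign analysis then completes the proof.
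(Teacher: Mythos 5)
Your proposal is correct, but note that the paper does not actually prove this lemma: its entire proof is a citation, part (2) being \cite[Lemma 2.1]{liuosserman} and part (1) being \cite[Theorem 5.3]{liuosserman}. What you have written is in substance a reconstruction of those cited arguments, so the real comparison is with Liu--Osserman and with Proposition \ref{prop:condEisenstein} of this paper, where the same primitivity-plus-classification argument is carried out in full for the specialized groups $G_{1,a}$. Your verification of the explicit triple is complete and correct (the arc count $(d-e_3+1)+(d-e_2)=e_1$ via \eqref{eq:genus0}, disjointness of the arcs from $e_1\le d$, transitivity of $\langle g_2,g_3\rangle$, and the product relation all check out), and concluding via weak rigidity is legitimate, though mildly circular in spirit, since the weak rigidity you quote is the very Lemma 2.1 of \cite{liuosserman} that also contains the triple \eqref{eq:gensysn=1}. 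On part (1), two corrections to your self-assessment. First, the primitivity step that you flag as ``the main obstacle,'' supposedly requiring ``the interaction of all three generators and the relation $g_1g_2g_3=1$,'' is in fact purely arithmetic and uses neither: for a block system with blocks of size $m$ (so $m\mid d$), a single cycle of length $e$ preserving the system either has $e\le m$ with support inside one block, or satisfies $m\mid e$ with support a union of whole blocks --- your observation that the blocks meeting the support are cyclically permuted with equal contributions needs only the extra line that a block meeting the support partially also contains fixed points and is therefore fixed setwise, forcing full blocks in the nontrivial case. The four-case count of how many $e_i$ fall below $m$, using only $e_1+e_2+e_3=2d+1$ and $m\mid d$, then yields a contradiction; this is exactly the argument spelled out in the proof of Proposition \ref{prop:condEisenstein}, so the step you left as a plan is routine. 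Second, for the classification step the paper (again in Proposition \ref{prop:condEisenstein}) uses Williamson's theorem \cite{Williamson} for $d>10$ plus a finite check for small $d$, whereas you invoke the complete classification of primitive groups containing a cycle; both suffice, and your fixed-point count $d-e_1\ge (d-1)/3\ge 2$ for $d\ge 7$ is correct. Your sign dichotomy and your identification of the unique exception $(6;4,4,5)$, whose group is $\PGL_2(5)\simeq S_5$ transitively embedded in $S_6$, agree with the paper's remark preceding Proposition \ref{prop:betas}.
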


\begin{proof}
The first statement is proved in \cite[Theorem~5.3]{liuosserman}. The
second statement is \cite[Lemma~2.1]{liuosserman}.
\end{proof}

\subsection{Reduction of normalized Belyi maps}\label{sec:Belyired}

In this section we recall from \cite[Section 4]{WINE2} the definition of and some results on the reduction of normalized Belyi maps.\\

We identify the cover $f$ with the rational function that defines it. Without loss of generality we may assume the following holds:
\begin{enumerate}
\item We may write $f(x) = x^{e_1}f_1(x)/f_2(x)$,  with $f_1, f_2\in \ZZ[x]$ such that $f_1(0) \neq 0$ and $f_2(0) \neq 0$;
\item The polynomials $f_1, f_2$ satisfy $\gcd(f_1, f_2)=1$ and have coprime leading coefficients;
\item The greatest common divisor of all coefficients of $f$ is $1$.
\end{enumerate}

\begin{definition}\label{def:Belyired}
Let $f$ be a normalized Belyi map of type $(d; e_1, e_2, e_3)$ and let $p$ be a prime. 
The \emph{reduction} $\overline{f}$ of $f$ at $p$ is defined as $\overline{f}=x^{e_1}\overline{f}_1/\overline{f}_2$,
where $\overline{f}_i$ is the reduction of $f_i$ modulo $p$ (as a polynomial with coefficients in $\ZZ$).
\end{definition}

Definition \ref{def:Belyired} is a simplification of the definition in
\cite[Section 4]{WINE2}, taking into account the result of
\cite[Proposition 4]{WINE2}.

The following proposition lists some properties of the reduction of a
normalized Belyi map.  The proof uses the assumption that $f$ is a normalized
Belyi map of type $\underline{C}$ in an essential way, see
\cite[Example 4]{WINE2}.

\begin{proposition}\label{prop:Belyired}
Let $f$ be a normalized Belyi map of type $\underline{C}=(d; e_1, e_2, e_3)$ and let $p$ be a prime.
\begin{enumerate}
\item The rational function $\overline{f}\in \FF_p(x)$ is non-constant.
\item We have $\overline{f}(0)=0, \overline{f}(1)=1$, and $\overline{f}(\infty)=\infty$.
\end{enumerate}
\end{proposition}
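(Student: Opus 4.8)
The plan is to prove both statements by carefully analyzing what the reduction procedure does to the normalizing data recorded in the setup before Definition \ref{def:Belyired}. We are given $f(x) = x^{e_1} f_1(x)/f_2(x)$ with $f_1, f_2 \in \ZZ[x]$, where $f_1(0)\neq 0$, $f_2(0)\neq 0$, $\gcd(f_1,f_2)=1$ with coprime leading coefficients, and the content of $f$ is $1$. The key observation is that each normalizing hypothesis forbids a specific degeneracy of the reduction, so the proof is essentially a check that none of these degeneracies can occur modulo $p$.

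\emph{Non-constancy (1).} First I would show $\overline{f}$ cannot be constant. The danger is that reducing mod $p$ could kill enough coefficients of the numerator and denominator to leave a constant (or the indeterminate form $0/0$). The content-one assumption (3) is exactly what prevents the whole function from vanishing: since the gcd of all coefficients of $f$ is $1$, not all of them are divisible by $p$, so $\overline{f}$ is not identically $0$. To rule out a nonzero constant, I would use the leading-coefficient coprimality in (2): since $f_1$ and $f_2$ have coprime leading coefficients, at most one of $\overline{f}_1, \overline{f}_2$ can drop in degree, so the degree of $\overline{f}$ as computed from $x^{e_1}\overline{f}_1/\overline{f}_2$ still records genuine $x$-dependence coming from the factor $x^{e_1}$ with $e_1 \geq 2$ (recall $e_i > 1$). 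This forces a pole or zero at a finite point, so $\overline{f}$ is non-constant.

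\emph{Fixed points (2).} For the three fixed-point conditions I would treat each branch point separately. That $\overline{f}(0)=0$ is immediate: the explicit factor $x^{e_1}$ in $f = x^{e_1} f_1/f_2$ survives reduction because $f_2(0)\not\equiv 0 \pmod p$ would be needed, but more carefully, since $f_1(0)\neq 0$ and $f_2(0)\neq 0$ in $\ZZ$, I must check these nonvanishing conditions persist mod $p$; here the leading/constant-coefficient control is used, and in any case $\overline{f}(0)=0$ follows from $x^{e_1}\mid$ numerator. For $\overline{f}(\infty)=\infty$, I would compare $\deg$ of numerator and denominator of $f$: since $f$ fixes $\infty$ with ramification index $e_3$, the original function satisfies $\deg(\text{num}) - \deg(\text{den}) = e_3 \geq 2 > 0$, and coprimality of leading coefficients (2) guarantees this degree gap is preserved under reduction, giving a pole at $\infty$. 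For $\overline{f}(1)=1$, I would use that $f(1)=1$ means $(x-1)$ divides $f(x)-1$ in $\ZZ[x]$ (after clearing the denominator $f_2$), i.e.\ $x^{e_1}f_1(x) - f_2(x) = (x-1)^{e_2} h(x)$ for the ramification structure; reducing this polynomial identity mod $p$ and evaluating at $x=1$ yields $\overline{f}(1)=1$ provided $\overline{f}_2(1)\neq 0$.

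The main obstacle I anticipate is verifying that the nonvanishing conditions $f_1(0)\neq 0$, $f_2(0)\neq 0$, $f_2(1)\neq 0$, and the degree gaps, all survive reduction mod $p$ for \emph{every} prime $p$ rather than for all but finitely many. The normalizing assumptions (1)--(3) are precisely engineered to guarantee this, but I would need to argue that they are strong enough on their own, since for certain primes of bad reduction the degrees of $\overline{f}_1$ or $\overline{f}_2$ could drop. The resolution is that Proposition \ref{prop:rigid} and the coprime-leading-coefficient normalization ensure $\overline{f}$ is still a well-defined nonconstant rational map fixing the three marked points even when the \emph{type} of the reduction degenerates; the reduction may fail to remain a Belyi map of the same type, but the three fixed points $0,1,\infty$ persist, which is all that statements (1) and (2) assert.
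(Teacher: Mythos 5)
Your proposal has a genuine gap, and it is precisely the one the paper warns about: the remark before the proposition states that the proof ``uses the assumption that $f$ is a normalized Belyi map of type $\underline{C}$ in an essential way'' (the paper's own proof is a citation to the proof of \cite[Proposition~4]{WINE2} and \cite[Remark~6]{WINE2}), whereas your argument tries to run on the normalizing conditions (1)--(3) alone, via ``persistence'' claims that are false at primes of bad reduction. Specifically: $f_1(0)$, $f_2(0)$, and $f_2(1)$ are nonzero \emph{integers}, and nothing in (1)--(3) prevents $p$ from dividing them, so your steps ``these nonvanishing conditions persist mod $p$'' and ``provided $\overline{f}_2(1)\neq 0$'' can fail. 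A concrete example inside the paper's own family (Proposition \ref{prop:Belyifam}(2) with $d=5$, $k=1$, type $(5;4,3,4)$): $f(x)=x^4(3x-5)/(3-5x)$ has $f_2(1)=-2$, so at $p=2$ one gets $\overline{f}_2(1)=0$, a common factor $x+1$ appears, and $\overline{f}=x^4$; the conclusion $\overline{f}(1)=1$ survives, but only via an order-of-vanishing comparison, not via persistence. Similarly, coprimality of the leading coefficients only guarantees that at most one of $\overline{f}_1,\overline{f}_2$ drops degree; when the numerator's degree drops (as in the example), your ``degree gap is preserved'' argument at $\infty$ has no content. Condition (3) also does not rule out $\overline{f}\equiv 0$: the unit coefficient could sit in $f_2$ while every coefficient of $f_1$ is divisible by $p$, so the non-constancy argument is incomplete as well. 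The closing appeal to Proposition \ref{prop:rigid} cannot repair any of this: rigidity concerns the uniqueness of $f$ over $\QQ$ and says nothing about reductions.

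The missing mechanism is the numerics of the single-cycle genus-zero type combined with an integral identity at $x=1$. Since $f(1)=1$ with ramification index $e_2$ and $(x-1)^{e_2}$ is monic, one has $x^{e_1}f_1 - f_2 = (x-1)^{e_2}h$ with $h\in\ZZ[x]$, an identity that reduces modulo \emph{every} prime; moreover \eqref{eq:genus0} with $e_i\leq d$ gives $\deg f_2 = d-e_3 = e_1+e_2-d-1 < \min(e_1,e_2)$ and $\deg f_1 = d-e_1 < e_2$. These inequalities do all the work: $\overline{f}_2\not\equiv 0$, since otherwise $x^{e_1}\overline{f}_1=(x-1)^{e_2}\overline{h}$ would force $(x-1)^{e_2}\mid \overline{f}_1$, impossible as $\deg f_1<e_2$, unless also $\overline{f}_1\equiv 0$, contradicting condition (2); symmetrically $\overline{f}_1\not\equiv 0$. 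Then $\mathrm{ord}_0\,\overline{f}_2\leq \deg f_2 < e_1$ yields $\overline{f}(0)=0$ even when $p\mid f_2(0)$; the reduced identity together with $\mathrm{ord}_1\,\overline{f}_2\leq \deg f_2 < e_2$ yields $\overline{f}(1)=1$ even when $p\mid f_2(1)$; the bound $\deg(x^{e_1}\overline{f}_1)\geq e_1 > d-e_3 \geq \deg\overline{f}_2$ yields $\overline{f}(\infty)=\infty$; and $x^{e_1}\overline{f}_1 = c\,\overline{f}_2$ is impossible for the same reason, giving non-constancy. None of these inequalities, nor any use of the exact vanishing orders at the three ramification points, appears in your proposal, so as written it does not establish the statement at exactly the primes where the reduction degenerates --- which is where the proposition has its content.
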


\begin{proof}
The statement is not stated explicitly in \cite{WINE2} in this form,
but it follows immediately from the proof of \cite[Proposition
  4]{WINE2} and \cite[Remark 6]{WINE2}.
\end{proof}

\begin{definition}\label{def:goodred}\mbox{}
\begin{enumerate}
\item A normalized Belyi map $f$ has \emph{bad reduction} at a
  prime $p$ if $\overline{f}$ has strictly smaller degree than
  $f$. Otherwise, $f$ has \emph{good reduction} at $p$.
\item A normalized Belyi map $f$ has \emph{good monomial reduction
  at $p$} if it has good reduction at $p$ and its reduction is
  $\overline{f}(x) = x^{\deg(f)}$.
\item A normalized Belyi map $f$ has \emph{good
  separable reduction at $p$} if it has good reduction at $p$ and its
  reduction is a separable rational map. A rational map in $\FF_p(x)$ is separable if and only if it is not contained in $\overline{\FF}_p(x^p)$, if and only if it defines a separable map $\mathbb{P}^1_{\mathbb{F}_p} \to \mathbb{P}^1_{\mathbb{F}_p}$.
\end{enumerate}
\end{definition}

If a normalized Belyi map $f$ has good separable reduction at $p$ then
its reduction $\overline{f}$ is also a normalized Belyi map and
$\overline{f}$ has the same type as $f$. This follows from the proof
of \cite[Proposition 5]{WINE2}. Good monomial reduction is a special
case of good inseparable reduction. In Proposition
\ref{prop:Eisenstein} we use it to characterize the irreducibility of
specializations of $f$ and its iterates. 

The following result allows us to prescribe the reduction of a normalized Belyi map by purely combinatorial conditions on its ramification indices.

\begin{proposition}\label{prop:Belyiredcrit}
Let $f$ be a normalized Belyi map of combinatorial type $(d; e_1, e_2, e_3)$.
\begin{enumerate}
\item Assume that $p>d$. Then $f$ has good separable reduction at $p$.
\item Write $d = p^nd'$ where $p \nmid d'$. Then $f$ has good monomial
  reduction at $p$ if and only if $e_2 \leq p^n$.
\end{enumerate}
\end{proposition}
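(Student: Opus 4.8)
The plan is to exploit two structural identities satisfied by a normalized Belyi map $f = N/D$ of type $(d; e_1, e_2, e_3)$, both forced by the fact that $f$ is ramified only over $0,1,\infty$ with a single ramification point over each branch point. Writing $N = x^{e_1}f_1$ and $D = f_2$ with $\deg f_1 = d-e_1$ and $\deg f_2 = d-e_3$ (so that $f(\infty)=\infty$ to order $e_3$), the single-cycle condition together with the genus-$0$ relation $e_1+e_2+e_3 = 2d+1$ forces the derivative to have no extra zeros; comparing degrees and leading terms yields
\[
f'(x) = \frac{e_3\,a\,b\;x^{e_1-1}(x-1)^{e_2-1}}{f_2(x)^2},
\]
where $a,b$ are the leading coefficients of $f_1,f_2$. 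Dually, since $x=1$ is the unique (and totally ramified) point over $1$, there is a factorization $x^{e_1}f_1 - f_2 = c\,(x-1)^{e_2}h$ with $\deg h = d-e_2$ and $h(1)\neq 0$. Reduced modulo $p$, these two identities drive both parts.

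For part (1), suppose $p > d$. Then $p\nmid e_i$ for all $i$, since $e_i \le d$. Moreover $f$ is a tamely ramified cover of $\PP^1$ branched over $\{0,1,\infty\}$, and its geometric Galois group $G(f)$ embeds in $S_d$, so $|G(f)|$ divides $d!$ and has all prime factors $\le d < p$. Since the branch locus $\{0,1,\infty\}$ reduces to three distinct points in every characteristic, $p$ is prime both to the order of the monodromy group and to the bad primes of the branch divisor; by the specialization theory of tame covers of $\PP^1$, the map $f$ then has good reduction and $\overline f$ is again a separable cover of $\PP^1_{\FF_p}$ of the same degree $d$ and the same type $(d;e_1,e_2,e_3)$. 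Concretely, good reduction amounts to $p\nmid a$ together with $\gcd(\overline N,\overline D)=1$ (no degree drop), and then reducing the displayed derivative formula gives $\overline f' = \overline{e_3 a b}\,x^{e_1-1}(x-1)^{e_2-1}/\overline{f_2}^{\,2}\neq 0$ because $p\nmid e_3 a b$; hence $\overline f\notin\overline{\FF}_p(x^p)$ is separable.

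For part (2), write $d = p^n d'$ with $p\nmid d'$. Necessity of $e_2\le p^n$ is the clean direction: if $\overline f = x^d$ then $\overline{f_2}$ is a nonzero constant and $\overline f - 1 = x^d - 1 = (x^{d'}-1)^{p^n}$, in which $(x-1)$ occurs with multiplicity exactly $p^n$ (since $x^{d'}-1$ is separable with $1$ as a simple root). Reducing $x^{e_1}f_1 - f_2 = c(x-1)^{e_2}h$ shows that $\overline f - 1$ is divisible by $(x-1)^{e_2}$, whence $e_2\le p^n$.

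Sufficiency is the harder direction and is the main obstacle. One must show that $e_2\le p^n$ forces every non-leading coefficient of the numerator and every non-constant coefficient of the denominator of $f$ to vanish modulo $p$, i.e.\ $\overline f = x^d$. The plan is to compute the $p$-adic valuations of the coefficients of $f$ directly. These coefficients are ratios of products of integers $\le d$ and binomial coefficients — explicitly so for the two families of Proposition~\ref{prop:Belyifam}, and in general through the relations defining $f$ — so the analysis reduces to Kummer's and Lucas's theorems on binomial coefficients modulo $p$, with $e_2\le p^n$ being precisely the combinatorial threshold at which every coefficient except the top one acquires positive $p$-adic valuation. The principal difficulty is to carry out this valuation bookkeeping uniformly in the type rather than family-by-family, which is exactly where the explicit shape of the coefficients together with the factorization above (controlling vanishing at $x=1$) must be brought to bear.
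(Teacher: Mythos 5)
Your target here is really the proofs in \cite{WINE2}: the paper itself establishes this proposition purely by citation (Statement (1) is [WINE2, Corollary 2], Statement (2) is [WINE2, Theorem 1]), so a blind attempt must reconstruct those arguments. Your part (1) is a legitimate route: for $p>d$ all ramification indices are prime to $p$, the cover is tame, and the specialization theory of the tame fundamental group of $\PP^1\setminus\{0,1,\infty\}$ (a Beckmann-type argument, since $|G(f)|$ divides $d!$) gives good reduction with the same type; separability then follows from the correct identity $f' = e_3ab\,x^{e_1-1}(x-1)^{e_2-1}/f_2^2$, whose degree count $e_1+e_2-2 = 2d-e_3-1$ is forced by the three-point single-cycle condition. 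The one soft spot is that you silently identify abstract good reduction of the \emph{cover} (a finite flat model over $\Spec(\ZZ_p)$) with the naive coefficientwise reduction $\overline{f}$ of Definition~\ref{def:Belyired}; these notions differ in general (cf.\ the discussion in Remark~\ref{rem:tower}), and deducing $p\nmid a$, $p\nmid b$ and $\gcd(\overline{N},\overline{D})=1$ from the abstract statement, with the coordinates pinned by the normalization, is precisely part of the content of [WINE2, Section 4] rather than a formality. Your necessity direction in (2) is correct and complete: $x^{e_1}f_1-f_2=(x-1)^{e_2}H$ with $H\in\ZZ[x]$ (division by a monic polynomial), good monomial reduction forces $\overline{f_2}$ to be a nonzero constant by the degree comparison $\overline{f_1}=x^{d-e_1}\overline{f_2}$, and comparing with $x^d-1=(x^{d'}-1)^{p^n}$, in which $x=1$ has multiplicity exactly $p^n$, yields $e_2\leq p^n$.

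The genuine gap is the sufficiency direction of (2), which you do not prove but only outline, and you flag the unresolved difficulty yourself. Concretely, your plan to "compute the $p$-adic valuations of the coefficients directly" via Kummer's and Lucas's theorems has no input to run on in general: explicit coefficient formulas exist only for the two special families of Proposition~\ref{prop:Belyifam} (types $(d;d-k,k+1,d)$ and $(d;d-k,2k+1,d-k)$), and for an arbitrary type $(d;e_1,e_2,e_3)$ neither this paper nor your proposal supplies a closed form; the phrase "in general through the relations defining $f$" is not an argument. One would first need a uniform description of the coefficients (e.g., via the hypergeometric structure implicit in your derivative identity) or a structural argument avoiding coefficients altogether, and showing that $e_2\leq p^n$ is exactly the combinatorial threshold at which all lower coefficients reduce to zero is the entire substance of [WINE2, Theorem 1]. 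As submitted, the step "every non-leading numerator coefficient and every non-constant denominator coefficient vanishes modulo $p$" is asserted, not derived, so the "if" direction of (2) remains unproven.
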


\begin{proof}
Statement (1) follows from \cite[Corollary 2]{WINE2}.  Statement~(2)
is \cite[Theorem 1]{WINE2}.
\end{proof}

\section{Arboreal Galois groups}\label{sec:arbo}
\subsection{Automorphisms of the $d$-ary regular tree}\label{sec:tree}

Let $T_n$ be the regular $d$-ary rooted tree of level $n$ (cf.~Figure \ref{fig1}).
 \begin{figure}[ht]
\begin{tikzpicture}[node distance=2cm]
 \node(zero) at (0,5) {$\circ$};
 \node (circ1) at (-1,4) {$\circ$};
 \node (circ2) at (0,4) {$\circ$};
 \node (circ3) at (1,4) {$\circ$};
 \node (prevcircle1) at (-2.5,2) {$\circ$};
 \node (prevcircle2) at (0,2) {$\circ$};
 \node (prevcircle3) at (2.5,2) {$\circ$};
 \node (v1) at (-3,1) {$\circ$};
 \node (v1circ1) at (-2.5,1) {$\circ$};
 \node (v1circ2) at (-2,1) {$\circ$};
 \node (vk) at (0,1) {$\circ$};
 \node (vkcirc1) at (-0.5,1) {$\circ$};
 \node (vkcirc2) at (0.5,1) {$\circ$};
 \node (vN) at (3,1) {$\circ$};
 \node (vNcirc1) at (2.5,1) {$\circ$};
 \node (vNcirc2) at (2,1) {$\circ$};
 \draw[dotted] (circ1)--(prevcircle1);
 \draw[dotted] (circ2)--(prevcircle2);
 \draw[dotted] (circ3)--(prevcircle3);
 \draw (prevcircle1) -- (v1);
 \draw (prevcircle1) -- (v1circ1);
 \draw (prevcircle1) -- (v1circ2);
 \draw (prevcircle2) -- (vk);
 \draw (prevcircle2) -- (vkcirc1);
 \draw (prevcircle2) -- (vkcirc2);
 \draw (prevcircle3) -- (vN);
 \draw (prevcircle3) -- (vNcirc1);
 \draw (prevcircle3) -- (vNcirc2);
 \draw (v1) -- (prevcircle1);
 \draw (circ1) -- (zero);
 \draw (circ2) -- (zero);
 \draw (circ3) -- (zero);
\end{tikzpicture}
\caption{The regular $3$-ary tree}\label{fig1}
\end{figure}
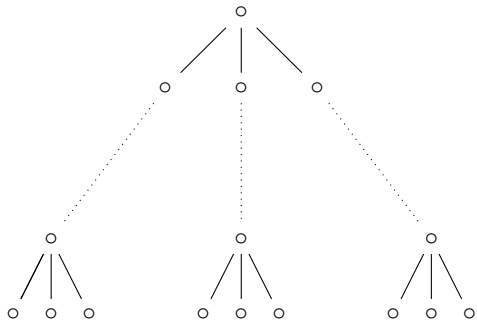

We label the vertices of the tree as follows: the root of the tree
corresponds to the level $0$, and has the empty label $()$.  The
vertices at level $i$ are labeled as $(\ell_1, \ldots, \ell_i)$ with
$\ell_j\in \{1, \ldots, d\}$.  Here $ (\ell_1, \ldots, \ell_{i-1})$ is
the unique vertex at level $i-1$ which is connected to $(\ell_1,
\ldots, \ell_i)$ by an edge.  For the vertices of $T_n$ at level $n$,
also called the leaves, we additionally use the numbering
\begin{equation}\label{eq:leafnumber}
1+\sum_{k=1}^n(\ell_k-1) d^{k-1}\in \{1,2,\ldots, d^n\} \qquad \text{ instead of } (\ell_1,
\ldots, \ell_n).
\end{equation}
Since $\Aut(T_n)$ acts faithfully on the leaves of the tree $T_n$, the
choice of the numbering induces an injective group homomorphism
\begin{equation}\label{eq:iota}
\iota_n:\Aut(T_n)\hookrightarrow S_{d^n}.
\end{equation}
In this paper we use the convention that permutations act from the right. 
 
Rather than considering $\Aut(T_n)$ as a subgroup of $S_{d^n}$ it is
more convenient for our purposes to view $\Aut(T_n)$ as a subgroup of
the $n$-fold iterated wreath product of $S_d$ by itself. The structure
of $\Aut(T_n)$ as $n$-fold iterated wreath product is 
\begin{equation}\label{eq:wrauto}
\Aut(T_n)\simeq \Aut(T_{n-1})\wr \Aut(T_1) \text{ for } n \geq 2.
\end{equation} 
This isomorphism is induced by the decomposition of $T_n$ as the
subtree $T_1$ (consisting of the levels $0$ and $1$), and $d$ copies
of $T_{n-1}$ each consisting of the complete subtree of $T_n$ with
root $(j)$ for $j\in \{1, \ldots, d\}$. We remark that
$\Aut(T_1)\simeq S_d$, but that the iterated wreath product
$\Aut(T_n)$ is a strict subgroup of $S_{d^n}$ for $n\geq 2$.

Equation  \eqref{eq:wrauto}  allows us to write the elements of
$\Aut(T_n)$ as tuples
\begin{equation}\label{eq:wrelts}
(\boldsymbol{\sigma}, \tau)=((\sigma_1, \ldots, \sigma_d), \tau) \text{ with }\sigma_j\in \Aut(T_{n-1})\text{ and }\tau\in \Aut(T_1)\simeq S_d.
\end{equation}
Using this identification, an element of $\Aut(T_n)$ acts on vertices of $T_n$ as
\begin{equation}\label{eq:wraction} 
(\ell_1, \ldots, \ell_n)\cdot ((\sigma_1, \ldots,\sigma_d), \tau) =
 ( (\ell_1)\tau, (\ell_2 \ldots, \ell_n)\sigma_{\ell_1} ).
\end{equation}
In other words, $\tau$ permutes the $d$ complete subtrees isomorphic
to $T_{n-1}$, and $\sigma_j$ acts on the complete subtree with root
$(j)$.

Let $(\ell_1, \ldots, \ell_m)$ be a vertex of $T_n$ at level $m\leq
n-1$ and let $i=1+\sum_{k=1}^m (\ell_k-1)d^{k-1}$.  We denote the
complete subtree of $T_n$ with root $(\ell_1, \ldots, \ell_m)$ by
\begin{equation}\label{eq:subtree}
T_n(\ell_1, \ldots, \ell_m)=T_n^i.
\end{equation}
Correspondingly, we denote the group of permutations that only permute
the leaves of $T_n^i$ and fix all other leaves of $T_n$ by
\begin{equation}\label{eq:symmsubtree}
\mathcal{S}(T_n^i)=\{ (\boldsymbol{\sigma}, \tau)\in \Aut(T_n)\mid  (\boldsymbol{\sigma}, \tau) \text{ acts trivially outside  }T_n^i).
\end{equation}
We may view 
  $\mathcal{S}(T_n^i)$ as a subgroup of $S_{d^n}$ via $\iota_n$.

For future reference, we note that
\begin{equation}\label{eq:wrconj}
\begin{split}
((\sigma'_1, \ldots,\sigma'_d), \tau')\cdot ((\sigma_1,
  \ldots,\sigma_d), \tau) &= ( (\sigma'_1\sigma_{(1)\tau'}, \ldots,
  \sigma'_d\sigma_{(d)\tau'}), \tau'\tau),\\ 
  ((\sigma_1,  \ldots,\sigma_d), \tau)^{-1}&=((\sigma_{(1)\tau^{-1}}^{-1},\ldots,\sigma_{(d)\tau^{-1}}^{-1}), \tau^{-1})
 ),\\ 
  ((\sigma_1, \ldots,\sigma_d), \tau)^{-1} ((\xi_1,
  \ldots,\xi_d),-) ((\sigma_1, \ldots,\sigma_d), \tau)&=((\sigma^{-1}_{(1)\tau^{-1}}\xi_{(1)\tau^{-1}}\sigma_{(1)\tau^{-1}},
  \ldots), -).
 \end{split}
\end{equation}
Here $-$ denotes the trivial permutation.

For every $m\leq n$ we write $\pi_m$ for the natural projection
\[
\pi_m:\Aut(T_n)\to \Aut(T_m),
\]
which corresponds to restricting the action of an element of
$\Aut(T_n)$ to the subtree $T_m$ consisting of the levels $0, 1, \ldots, m$.

\begin{definition}\label{def:sgn2}\mbox{}
\begin{enumerate}
\item  Define $\sgn_2:\Aut(T_2)\to \{\pm 1\}$ by
setting
 \begin{equation}\label{eq:wrsgn}
 \sgn_2(((\sigma_1, \ldots,\sigma_d), \tau)) = \sgn(\tau) \prod_{i=1}^d
 \sgn(\sigma_i).
 \end{equation}
Here $\sgn$ is the usual sign on $\Aut(T_1)$ via the identification $
\Aut(T_1)\simeq S_d$ induced by the choice of labeling of the vertices.
\item For $n> 2$ we define
\begin{equation}\label{eq:sgn2}
\sgn_2:=\sgn_2\circ \pi_2:\Aut(T_n)\to \{\pm 1\}.
\end{equation}
\end{enumerate}
We call $\sgn_2$  the \emph{wreath-product sign}.
\end{definition}

 Using  \eqref{eq:wrconj}, one may check that $\sgn_2$ is a group
 homomorphism.  We define a subgroup $E_n$ of $\Aut(T_n)$ using the
 wreath-product sign. We show in Corollary \ref{cor:GnEn} below that
 the Galois group of $f^n$ is a subgroup of $E_n$.

\begin{definition}\label{def:En}
Define the subgroup $E_n\subseteq \Aut(T_n)$ by
\[
E_n=\begin{cases}  \Aut(T_1)&\text{ if }n=1,\\ (E_{n-1}\wr
E_1)\cap \ker(\sgn_2)\subseteq \Aut(T_{n-1})\wr\Aut(T_1) = \Aut(T_n)
&\text{ otherwise}.
\end{cases}
\]
\end{definition}

\begin{lemma}\label{lem:En}
\begin{enumerate}
\item For all $d\geq 2$ and $n\geq 2$ we have 
\[
[\Aut(T_n):E_n]=2^{d^{n-2}+d^{n-3}+\cdots+d+1}.
\]
\item Assume that $d$ is odd. Then the wreath-product sign on $\Aut(T_n)$ agrees with the restriction of the usual sign on
  $S_{d^2}$ via the embedding $\iota_n$  from  \eqref{eq:iota}:
\[
\sgn_2:\Aut(T_2)\to\{\pm 1\}, \qquad \sgn_2((\boldsymbol{\sigma},
\tau))\mapsto \sgn\circ \iota_2((\boldsymbol{\sigma}, \tau)).
\]
\end{enumerate}
\end{lemma}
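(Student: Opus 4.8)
The plan is to treat the two parts in turn; both come down to understanding the homomorphism $\sgn_2$ explicitly. For (1) I would argue by induction on $n$, writing $a_n := |\Aut(T_n)|$ and $e_n := |E_n|$, so that \eqref{eq:wrauto} gives $a_n = a_{n-1}^d\, d!$. The key observation is that the restriction of $\sgn_2$ to $E_{n-1}\wr E_1$ is surjective onto $\{\pm 1\}$: taking $\tau\in E_1=\Aut(T_1)\simeq S_d$ to be a transposition and all $\sigma_i$ trivial, formula \eqref{eq:wrsgn} gives $\sgn_2((\boldsymbol{\sigma},\tau))=\sgn(\tau)=-1$, and this element lies in $E_{n-1}\wr E_1$ since the identity lies in $E_{n-1}$. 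As $E_n=(E_{n-1}\wr E_1)\cap\ker(\sgn_2)$ is by definition the kernel of $\sgn_2|_{E_{n-1}\wr E_1}$, surjectivity forces $[E_{n-1}\wr E_1:E_n]=2$, whence $e_n=\tfrac12\,e_{n-1}^d\,d!$. Combining the two recursions yields
\[
[\Aut(T_n):E_n]=\frac{a_{n-1}^d\,d!}{\tfrac12\,e_{n-1}^d\,d!}=2\,[\Aut(T_{n-1}):E_{n-1}]^d .
\]
The base case $n=2$ gives $[\Aut(T_2):E_2]=2$ (again because $\sgn_2$ is surjective on $\Aut(T_2)$ for $d\geq 2$), and writing $[\Aut(T_n):E_n]=2^{c_n}$ turns the recursion into $c_2=1$, $c_n=1+d\,c_{n-1}$, which solves to $c_n=d^{n-2}+\cdots+d+1$, as claimed.

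For (2) I would exploit that $\sgn\circ\iota_2$ is a homomorphism and evaluate it on the two natural types of factors. Using the factorization $(\boldsymbol{\sigma},\tau)=(\boldsymbol{\sigma},-)\cdot((1,\ldots,1),\tau)$, which follows directly from the multiplication rule \eqref{eq:wrconj} with $-$ the trivial permutation, and multiplicativity of $\sgn$, it suffices to treat each factor. By \eqref{eq:wraction} the base factor $(\boldsymbol{\sigma},-)$ acts on the $d^2$ leaves as $d$ disjoint permutations, $\sigma_i$ permuting the block of leaves below $(i)$, so $\sgn\circ\iota_2((\boldsymbol{\sigma},-))=\prod_{i=1}^d\sgn(\sigma_i)$. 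The top factor $((1,\ldots,1),\tau)$ permutes the $d$ blocks of size $d$ according to $\tau$; a transposition of two blocks is a product of $d$ leaf-transpositions, so $\sgn\circ\iota_2(((1,\ldots,1),\tau))=\sgn(\tau)^d$. Multiplying gives
\[
\sgn\circ\iota_2((\boldsymbol{\sigma},\tau))=\sgn(\tau)^d\prod_{i=1}^d\sgn(\sigma_i),
\]
which agrees with $\sgn_2((\boldsymbol{\sigma},\tau))=\sgn(\tau)\prod_i\sgn(\sigma_i)$ from \eqref{eq:wrsgn} precisely when $d$ is odd, since then $\sgn(\tau)^d=\sgn(\tau)$.

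The steps for (1) are entirely routine once the surjectivity of $\sgn_2$ on the relevant wreath product is noted, so the real content sits in (2). There the main point to get right is the block-permutation computation: under the right-action convention of \eqref{eq:wraction}, one must correctly track that $\tau$ permuting $d$ blocks of size $d$ contributes the factor $\sgn(\tau)^d$, and it is exactly the oddness of $d$ that collapses this back to $\sgn(\tau)$. The most error-prone part is therefore keeping the action conventions consistent through this sign computation; everything else is bookkeeping with the multiplicativity of $\sgn$ and the recursion for the orders.
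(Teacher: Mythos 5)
Your proposal is correct and follows essentially the same route as the paper: part (1) via the recursion $[\Aut(T_n):E_n]=2\,[\Aut(T_{n-1}):E_{n-1}]^d$ obtained from $|E_n|=|E_{n-1}|^d|E_1|/2$, and part (2) via the factorization $(\boldsymbol{\sigma},\tau)=(\boldsymbol{\sigma},-)\cdot((-),\tau)$ together with the observation that $((-),\tau)$ acts on the $d^2$ leaves with sign $\sgn(\tau)^d$. Your only addition is to make explicit, via a transposition in the top copy of $S_d$, the surjectivity of $\sgn_2$ on $E_{n-1}\wr E_1$ that the paper's order count uses implicitly--a worthwhile detail, but not a different argument.
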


\begin{proof}
The definition of $E_n$
(Definition \ref{def:En}) implies that 
\[
|E_n|=|E_{n-1}|^d\cdot |E_1|/2.
\]
Since $E_1=\Aut(T_1)$, the definition of the wreath-product sign
 \eqref{eq:sgn2}  implies that
\[
[\Aut(T_n):E_n]=2\left([\Aut(T_{n-1}):E_{n-1}]\right)^d.
\]
Statement (1) follows from this by induction.

To prove Statement (2), we first note that for any $(\boldsymbol{\sigma}, \tau) \in \Aut(T_2)$ we may write
\[
(\boldsymbol{\sigma}, \tau)=(\boldsymbol{\sigma},
-)\circ ((-), \tau).
\]
Since $\sgn_2$ is a group homomorphism, it suffices to treat the
elements on the right hand side separately.  

From  \eqref{eq:wraction}  it follows that $((-), \tau)$ acts as the
product of $d$ 
disjoint permutations of the same cycle type as
$\tau$. We conclude that the sign in $S_{d^2}$ of
the image of $((-), \tau)$ under $\iota_2$ is
$\sgn(\tau)^{d}$. This equals the wreath-product sign
$\sgn_2(((-), \tau))=\sgn(\tau)$  if $d$ is odd.

Now let $(\boldsymbol{\sigma},-)=((\sigma_1, \ldots, \sigma_d), -)\in
\Aut(T_2)$ with $\sigma_i\in E_{1}=\Aut(T_1)$. This element acts as
the product $\prod_i \sigma_i$, where each $\sigma_i$ acts on the
subtree of $T_2^i$, i.e., as element of $\mathcal{S}(T_2^i)$. (See
 \eqref{eq:symmsubtree}.)  We may therefore identify
$(\boldsymbol{\sigma},-)$ with its image under $\iota_2$ in $S_{d^2}$.
We
conclude that
\[
\sgn_2((\boldsymbol{\sigma}, -))=\prod_j\sgn(\sigma_j)=\sgn(\iota_2(\sigma_1,
\ldots, \sigma_d)).
\]
This finishes the proof of Statement (2).
\end{proof}

Note that it follows from the proof of Lemma \ref{lem:En}(2)
that for $d$ even the wreath-product sign  \eqref{eq:wrsgn}  is not
compatible with the natural sign on~$S_{d^2}$.

\subsection{A generating system for $f^n$}\label{sec:gensys}

Let $f$ be a normalized Belyi map. 
Recall that $f^n=f\circ \cdots \circ f$ denotes the $n$th iterate of
$f$, which is again a normalized Belyi map.
We start by defining the Galois groups that are the central object of
study in this paper. We then determine a generating system for $f^n$
in terms of a generating system for $f$.

Let $K$ be a field of definition of $f$. Since $f$ is a Belyi map, we
may assume that $K$ is a number field. Moreover, since $f$ is normalized, it
follows from Proposition \ref{prop:rigid} that we may take
$K=\QQ$. 
Write $F_0:=K(t)$ for the function field of $\PP^1_K$. Since
$K$ is a field of definition for $f^n$ for all $n$, the field $K$ is
integrally closed in the extension of function fields corresponding
to the map $f^n:\PP^1_K\to\PP^1_K$, which we denote by $F_n/F_0$.  We
choose a normal closure $M_n/F_0$ of $F_n/F_0$ such that $M_n$
contains $M_{n-1}$ for any $n \geq 1$.

The extension of function fields $\left(F_n\otimes_K\QQb\right)/\QQb(t)$
corresponds to $ f^n:\PP^1_\QQb\to\PP^1_\QQb$ considered as map over
the algebraic closure $\QQb$ of the number field $K$. Note that
$\left(M_n\otimes_K \QQb\right)/\QQb(t)$ is a normal closure of $\left(F_n \otimes_K
\overline{\QQ}\right)/\overline{\QQ}(t)$.  

\begin{definition}\label{def:Gn}
For arbitrary $n \geq 1$ we define
\[
G_{n, \QQ} =\Gal(M_n/F_0), \qquad G_{n, \QQb} =\Gal(\left(M_n\otimes_K\QQb\right)/\QQb(t)).
\]
\end{definition}

Note that $G_{1,\QQb} = G(f)$ as defined in Section
\ref{sec:Belyidef}.  

It follows from the definitions  that 
\begin{equation}\label{eq:GnQQ}
G_{n, \QQb}\subseteq G_{n, \QQ}.
\end{equation}

In general it is not true that $G_{n, \QQb}= G_{n, \QQ}$, see Remark
\ref{rem:disc}. In the case that we have equality in~\eqref{eq:GnQQ} we say that \emph{the Galois extension
  $\left(M_n\otimes_K\QQb\right)/\QQb(t)$ descends to} $\QQ(t)$ or in short that
the group $G_{n\,\QQb}$ \emph{descends} to $\QQ$.

Our convention that the normal closure $M_n$ contains $M_m$ for all
$m\leq n$ implies that $G_{n, \QQ}$, and hence $G_{n,\QQb}$, naturally
has the structure of a wreath product. Identifying the sheets of $f^n$
above a chosen base point in $\PP^1(\QQ)\setminus\{0,1,\infty\}$ with
the leaves of the tree $T_n$ yields an inclusion
\begin{equation}\label{eq:Gnaswr}
G_{n,\QQ}\hookrightarrow \Aut(T_n)\simeq \Aut(T_{n-1})\wr \Aut(T_1).
\end{equation}
In the rest of the paper we fix this inclusion.

Our next goal is to determine a generating system $(g_{1,n}, g_{2,n},
g_{3,n})$ for $f^n$ for all $n$ in terms of a fixed generating system
$(g_{1,1}, g_{2,1}, g_{3,1})$ for $f$. Since $G_{n,\QQb}=\langle
g_{1,n}, g_{2,n}\rangle$, we can use this to determine the group
$G_{n,\QQb}$. We refer to Theorem \ref{thm:G2alt} for the precise
result.

\begin{remark}\label{rem:ramfn}
 By the single-cycle condition, the fiber above $0$ in $f^n$
 contains a unique point with ramification index $e_1^n$. (This is the
 point $x = 0$.)
 Additionally, for each $0\leq i\leq n-1$ 
 there are exactly $(d-e_1)d^{n-1-i}$ ramification points with
 ramification index $(e_1)^i$.  (These are exactly the points in the
 inverse image $f^{-n+i}(0)$ which are not in $f^{-n-1+i}(0)$.)
 The analogous statements with $0$ replaced by $1$ or $\infty$ and
 $e_1$ by $e_2$ or $e_3$, respectively, also hold.  This description
 determines the cycle type of the group elements of a generating
 system of $f^n$ considered as elements of $S_{d^n}$. For our purposes
 we need more precise information, which the following proposition
 supplies.
\end{remark}

A number is said to be \emph{in the support} of a permutation if it appears in one of the cycles of the permutation, i.e., if it is not fixed by the permutation.

\begin{proposition}\label{prop:gensys}
Let $(g_{1,1}, g_{2,1}, g_{3,1})$ be a generating system of $f$ such that $1$ is in the support of~$g_{1,1}$. 
For any $n \geq 2$, inductively define
\[
\begin{split}
g_{1,n}&=((g_{1,n-1}, -, \ldots,-), g_{1,1});\\
g_{2,n}&=((-,\ldots, -, g_{2,n-1}, -, \ldots,-), g_{2,1}), \text{ where $g_{2,n-1}$ is in position } (1)g_{1,1};\\
g_{3,n} &=( (-,\ldots, -, g_{3,n-1}, -, \ldots,-), g_{3,1}), \text{ where $g_{3,n-1}$ is in position } (1)g_{1,1}g_{2,1}.
\end{split}
\]
Here again $-$ denotes the trivial permutation. 
Then $(g_{1,n}, g_{2,n}, g_{3,n})$ is a generating system for~$f^n$.
\end{proposition}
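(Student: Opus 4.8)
The plan is to argue by induction on $n$, the base case $n=1$ being exactly the hypothesis that $(g_{1,1},g_{2,1},g_{3,1})$ is a generating system for $f$. For the inductive step I would regard $f^n=f\circ f^{n-1}$ as a composition of covers and compute the monodromy of the composition, using the decomposition of $T_n$ into the top level $T_1$, governed by the outer copy of $f$, and the $d$ subtrees isomorphic to $T_{n-1}$ rooted at the level-$1$ vertices, each governed by $f^{n-1}$ (cf.~\eqref{eq:wrauto} and \eqref{eq:wraction}). Fixing a base point $t_0\in\PP^1(\QQ)\setminus\{0,1,\infty\}$, the level-$1$ vertices are the $d$ preimages of $t_0$ under the outer $f$, and the subtree rooted at such a vertex $x_1$ is the fibre of $f^{n-1}$ over $x_1$. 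For a loop $\gamma_i$ around the branch point $x_i\in\{0,1,\infty\}$, the standard description of the monodromy of a composition of covers shows that the induced element of $\Aut(T_{n-1})\wr\Aut(T_1)$ has top (second-coordinate) component equal to the monodromy of $f$ around $x_i$, namely $g_{i,1}$; this already produces the $\tau$-component in each of the three formulas.

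The heart of the matter is to locate the vertical monodromy. Lifting $\gamma_i$ to the intermediate $x_1$-line, the lift starting at a level-$1$ vertex $x_1$ is a genuine loop around $x_i$ precisely when $x_1$ is the \emph{unique} ramification point of $f$ over $x_i$; for every other $x_1$ the lift is null-homotopic in $\PP^1\setminus\{0,1,\infty\}$ and contributes the trivial permutation. Hence exactly one subtree receives a nontrivial contribution, equal to the monodromy $g_{i,n-1}$ of $f^{n-1}$ around $x_i$, and all other subtree-components are trivial; here the single-cycle hypothesis is essential, as it guarantees a unique ramified sheet for each generator. It then remains to pin down the position of this nontrivial component in the chosen leaf-numbering. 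Threading the distinguished sheet labelled $1$ through the tower, and transporting fibres by the connecting paths, one obtains the ramified positions $1$, $(1)g_{1,1}$ and $(1)g_{1,1}g_{2,1}$ for $i=1,2,3$ respectively; the hypothesis $1\in\supp(g_{1,1})$ ensures that position $1$ lies in the ramified $e_1$-cycle over $0$, so that the induction is correctly seeded.

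I would then record two checks. First, a purely formal computation with the wreath-product multiplication \eqref{eq:wrconj} confirms the consistency of these positions: in forming $g_{1,n}g_{2,n}g_{3,n}$, the cross-terms collapse onto the single position $1$ precisely because $g_{2,n-1}$ sits in position $(1)g_{1,1}$ and $g_{3,n-1}$ in position $(1)g_{1,1}g_{2,1}$, leaving top component $g_{1,1}g_{2,1}g_{3,1}=1$ and bottom component $g_{1,n-1}g_{2,n-1}g_{3,n-1}$, which is trivial by induction; hence $g_{1,n}g_{2,n}g_{3,n}=1$, the defining relation of a generating system. Second, I would match the cycle types with the ramification data of $f^n$ in Remark~\ref{rem:ramfn}: since the nontrivial vertical component lies in the length-$e_i$ cycle of $g_{i,1}$, the cycle-product over that cycle equals $g_{i,n-1}$, so each cycle of $g_{i,n-1}$ of length $\ell$ yields a cycle of length $e_i\ell$, while the $d-e_i$ fixed points of $g_{i,1}$ contribute $(d-e_i)d^{n-1}$ trivially-acted subtrees; an immediate induction then reproduces the unique $e_i^n$-cycle together with the $(d-e_i)d^{n-1-j}$ cycles of length $e_i^j$.

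The step I expect to be the main obstacle is the positional bookkeeping of the second paragraph: proving rigorously that the vertical monodromy is concentrated at a single leaf and that, after a fixed identification of fibres by connecting paths, this leaf is $(1)g_{1,1}$ for the loop around $1$ and $(1)g_{1,1}g_{2,1}$ for the loop around $\infty$. This demands an orientation-consistent choice of base paths, compatible across the whole tower and with the relation $\gamma_0\gamma_1\gamma_\infty=1$ in $\pi_1(\PP^1\setminus\{0,1,\infty\})$, and it is exactly this choice that must be arranged so that the ramified positions land in $\supp(g_{2,1})$ and $\supp(g_{3,1})$, as the cycle-type check in the previous paragraph requires.
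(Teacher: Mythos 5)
Your proposal takes essentially the same approach as the paper: the paper's entire proof is the single sentence that the result ``follows by considering the image of the sheets of the Belyi map under $f^n = f\circ f^{n-1}$, using the notation introduced in \eqref{eq:wrelts},'' i.e., precisely the wreath-product decomposition and sheet-tracking you carry out. Your elaboration --- the concentration of the vertical monodromy at a single position via the single-cycle property, the formal verification of $g_{1,n}g_{2,n}g_{3,n}=1$ via \eqref{eq:wrconj}, and the cycle-type match with Remark~\ref{rem:ramfn} --- is correct and in fact supplies more detail (including the path-choice bookkeeping you rightly flag) than the paper itself records.
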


\begin{proof}
This follows by considering the image of the sheets of the Belyi map
under $f^n=f\circ f^{n-1}$, using the notation introduced in
\eqref{eq:wrelts}.
\end{proof}

A generating system for $f$ was given in Lemma \ref{lem:gensys}(2).

\begin{corollary}\label{cor:GnEn}
For all $n\geq 2$ we have that
\[
G_{n,\QQb}\subseteq E_n.
\]
\end{corollary}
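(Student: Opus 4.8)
The statement says the geometric Galois group sits inside $E_n$. Now $E_n$ was defined via the wreath-product sign: $E_n = (E_{n-1} \wr E_1) \cap \ker(\sgn_2)$, with $E_1 = \Aut(T_1)$. So I need to show two things: first that $G_{n,\QQb}$ lands inside $E_{n-1} \wr E_1$ (the "recursive containment"), and second that every element of $G_{n,\QQb}$ is in $\ker(\sgn_2)$, i.e. has trivial wreath-product sign.

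The natural approach is induction on $n$. The base case is $n=2$. For the induction, recall that $G_{n,\QQb} = \langle g_{1,n}, g_{2,n} \rangle$ where the generators $g_{i,n}$ are given explicitly by Proposition \ref{prop:gensys} in the wreath form $((-,\ldots,g_{i,n-1},\ldots,-), g_{i,1})$. Since $\sgn_2$ is a group homomorphism (noted after Definition \ref{def:sgn2}), it suffices to check $\sgn_2(g_{i,n}) = 1$ on the generators $g_{1,n}, g_{2,n}$.

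Let me think about the wreath-product sign on these generators. By definition $\sgn_2 = \sgn_2 \circ \pi_2$, so I only need the action on the top two levels. For $g_{i,n} = ((-,\ldots, g_{i,n-1}, \ldots, -), g_{i,1})$, the top permutation $\tau$ is $g_{i,1}$, a single $e_i$-cycle in $S_d$, and the projection to $T_2$ of the single nontrivial slot $g_{i,n-1}$ is governed by $\pi_1(g_{i,n-1}) = g_{i,1}$. Thus $\sgn_2(g_{i,n}) = \sgn(g_{i,1}) \cdot \sgn(g_{i,1}) = \sgn(g_{i,1})^2 = 1$, since all the other $d-1$ slots are trivial. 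This computation is clean precisely because each generator has exactly one nontrivial lower-level slot, so the product $\prod_j \sgn(\sigma_j)$ in \eqref{eq:wrsgn} reduces to a single sign, which then cancels against $\sgn(\tau)$ because $\pi_1(g_{i,n-1}) = g_{i,1}$. So the $\ker(\sgn_2)$ condition is essentially automatic from the recursive structure of the generating system.

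For the recursive containment $G_{n,\QQb} \subseteq E_{n-1} \wr E_1$, I would argue as follows. The projection $\pi_{n-1}$ sends $G_{n,\QQb}$ onto $G_{n-1,\QQb}$, which by the induction hypothesis lies in $E_{n-1}$; and the first-level action lands in $E_1 = \Aut(T_1)$ trivially. The subtle point is that membership in the wreath product $E_{n-1}\wr E_1$ requires each lower-level component $\sigma_j$ of \emph{every} element of $G_{n,\QQb}$ to lie in $E_{n-1}$, not just the projection. I would justify this by observing that the groups $\mathcal{S}(T_n^i)$ from \eqref{eq:symmsubtree} intersect $G_{n,\QQb}$ in copies of $G_{n-1,\QQb} \subseteq E_{n-1}$ (the decomposition subtrees each carry the Galois group of $f^{n-1}$ via the inertia/monodromy structure), so the components are controlled. \textbf{The main obstacle} I anticipate is exactly this recursive-containment step: making rigorous that the slot-wise components of arbitrary group elements (not merely the displayed generators) lie in $E_{n-1}$. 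The sign condition falls out mechanically from the generators, but verifying the full wreath structure $G_{n,\QQb} \hookrightarrow G_{n-1,\QQb} \wr G(f)$ and then invoking the induction hypothesis componentwise is where the real work lies; one clean way is to note that $G_{n,\QQb}$ is itself an iterated wreath product built from $G(f)$ by the very construction of the generating system, so its image under any projection to a single subtree $T_n^i$ is a conjugate of $G_{n-1,\QQb}$, which by induction sits in $E_{n-1}$.
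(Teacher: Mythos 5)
Your central mechanism is exactly the paper's: since $\sgn_2$ is a group homomorphism and $E_n$ is a subgroup of $\Aut(T_n)$, membership can be checked on the generators $g_{j,n}$ alone, and your sign computation $\sgn_2(g_{j,n})=\sgn_2(\pi_2(g_{j,n}))=\sgn(g_{j,1})^2=1$ — using that the unique nontrivial slot of $g_{j,n}$ projects under $\pi_1$ to $g_{j,1}$ — is precisely the paper's observation that $\pi_2(g_{j,n})=g_{j,2}$ with $\sgn_2(g_{j,2})=1$, followed by the same induction through Definition \ref{def:En}.

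The flaw is in how you treat your anticipated ``main obstacle'': it is not an obstacle, and the resolution you sketch rests on two false claims. Since $E_{n-1}\wr E_1$ is itself a subgroup of $\Aut(T_n)$ (the multiplication rule \eqref{eq:wrconj} shows the slot components of a product are products of permuted slot components, hence remain in $E_{n-1}$), the containment $G_{n,\QQb}\subseteq E_{n-1}\wr E_1$ also only needs to be verified on generators; by Proposition \ref{prop:gensys} the unique nontrivial slot of $g_{j,n}$ is $g_{j,n-1}$, which lies in $E_{n-1}$ by the induction hypothesis, so there is nothing to control for ``arbitrary group elements.'' Your substitute arguments, by contrast, are incorrect. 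First, $G_{n,\QQb}$ is \emph{not} ``itself an iterated wreath product built from $G(f)$'': if it were, then in the case $G(f)\simeq S_d$ it would equal all of $\Aut(T_n)$, which does not lie in $\ker(\sgn_2)$ — contradicting the very statement being proved; indeed, by Theorem \ref{thm:G2alt} and Lemma \ref{lem:En}(1) it is the index-two proper subgroup $E_n$ of $E_{n-1}\wr E_1$. Second, the intersections $G_{n,\QQb}\cap\mathcal{S}(T_n(j))$ are not copies of $G_{n-1,\QQb}$: when $G_{1,\QQb}\simeq S_d$, so that $G_{n,\QQb}=E_n$, an element $((\sigma,-,\ldots,-),-)$ lies in $E_n$ if and only if $\sigma\in E_{n-1}$ and $\sgn(\pi_1(\sigma))=1$, which cuts out an index-two subgroup of $E_{n-1}$. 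If you delete that paragraph and replace it by the subgroup observation above, your proof coincides with the paper's two-line argument.
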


\begin{proof}
We have already seen that $G_{n, \QQb}\subseteq \Aut(T_n)$. Since
$G_{n,\QQb}$ is generated by the $g_{j,n}$, it suffices to check that
$g_{j,n}\in E_n$ for $j=1,2,3$.

The inductive definition of $g_{j,n}$ given in Proposition
\ref{prop:gensys} implies that $\pi_2(g_{j,n})=g_{j,2}$ and that $\sgn_2(g_{j,2})=1$ for
$j\in \{1,2,3\}$.  The statement for $n=2$ follows. The statement for
arbitrary $n\geq 2$ follows by induction from the definition of $E_n$
(Definition \ref{def:En}).
\end{proof}

\begin{definition}\label{def:NNi}
Define
\[
N_{n, \overline{\QQ}}=\ker(\pi_{n-1}:G_{n, \overline{\QQ}}\to G_{n-1, \overline{\QQ}})\subseteq G_{n, \overline{\QQ}}.
\]
\end{definition}

Note that $(\boldsymbol{\sigma}, \tau)\in G_{n, \QQb}$ is contained in
$ N_{n, \overline{\QQ}}$ if and only if $(\boldsymbol{\sigma}, \tau)$ fixes all vertices
of the tree $T_n$ at the levels $1, \ldots, n-1$.  For any $1\leq
i=\ 1+\sum_{k=1}^{n-1}(\ell_k-1)d^{k-1}\leq d^{n-1}$, the subtree
$T_n^i$ with root $(\ell_1, \ldots, \ell_{n-1})$ contains exactly $d$
leaves.

\begin{definition}\label{def:Ni}
For $1\leq i\leq d^{n-1}$, we define 
\[
N_{n, \overline{\QQ}}^i:=N_{n, \overline{\QQ}}\cap \mathcal{S}(T_n^i).
\]
\end{definition}

The group $N_{n, \overline{\QQ}}^i$ is naturally a permutation group
on the $d$ letters $(\ell_1, \ldots,\ell_{n-1}, s)$ for $s=1, \ldots,
d$; recall from \eqref{eq:leafnumber} that
$i=\ 1+\sum_{k=1}^{n-1}(\ell_k-1)d^{k-1}$.  We therefore obtain an
identification
\[
N_{n, \overline{\QQ}}^i\subseteq N_{n, \overline{\QQ}}^1\oplus \cdots
\oplus N_{n, \overline{\QQ}}^{d^{n-1}} = N_{n, \overline{\QQ}}
\subseteq (S_d)^{d^{n-1}}.
 \]
With this identification, we may write
\begin{equation}\label{eq:rhodef}
N_{n, \overline{\QQ}}=\{(\rho^1, \ldots, \rho^{d^{n-1}})\mid \rho^i\in
\mathcal{S}(T_n^i) \text{ for all } 1 \leq i \leq d^{n-1} \}\subseteq (S_d)^{d^{n-1}}.
\end{equation}

\begin{lemma}\label{lem:Ni}
The group $N_{n,\overline{\mathbb{Q}}}^i$ is a subgroup of $A_d$ for all $i \in \{1,\ldots, d^{n-1}\}$.
\end{lemma}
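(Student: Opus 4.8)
The plan is to show that the permutation induced by an arbitrary element $\sigma \in N_{n, \overline{\QQ}}^i$ on the $d$ leaves of the subtree $T_n^i$ is \emph{even}. Fix such a $\sigma$. By Corollary \ref{cor:GnEn} we have $\sigma \in G_{n, \overline{\QQ}} \subseteq E_n$; by definition of $N_{n, \overline{\QQ}} = \ker(\pi_{n-1})$ the element $\sigma$ fixes every vertex of $T_n$ at levels $1, \ldots, n-1$; and since $\sigma \in \mathcal{S}(T_n^i)$ it acts trivially on every leaf lying outside $T_n^i$. It therefore suffices to prove the following purely tree-theoretic claim, in which $T_n^i$ is rooted at a vertex $u = (\ell_1, \ldots, \ell_{n-1})$ at level $n-1$: if $g \in E_n$ fixes all vertices at levels $1, \ldots, n-1$ and acts trivially outside a single level-$(n-1)$ subtree $T_n^i$, then the permutation $g$ induces on the $d$ leaves of $T_n^i$ lies in $A_d$.

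I would prove this claim by induction on $n$, peeling off one level of the wreath decomposition $\Aut(T_n) = \Aut(T_{n-1}) \wr \Aut(T_1)$ at a time, using the notation of \eqref{eq:wrelts} and \eqref{eq:wraction}. In the base case $n = 2$, write $g = ((\sigma_1, \ldots, \sigma_d), \tau)$; fixing the level-$1$ vertices forces $\tau = -$, and acting trivially outside $T_2^i$ forces $\sigma_j = -$ for all $j \neq i$. The defining condition $\sgn_2(g) = 1$ of $E_2$ (Definition \ref{def:En}) then reads $\sgn(\tau)\prod_j \sgn(\sigma_j) = \sgn(\sigma_i) = 1$, and $\sigma_i$ is precisely the action of $g$ on the leaves of $T_2^i$, so it lies in $A_d$.

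For the inductive step ($n \geq 3$) I again write $g = ((\sigma_1, \ldots, \sigma_d), \tau) \in E_{n-1} \wr E_1$. As before $\tau = -$, and since $g$ acts trivially outside $T_n^i \subseteq T_n(\ell_1)$ only the single component $\sigma_{\ell_1} \in E_{n-1}$ is nontrivial. Now $\sigma_{\ell_1}$ fixes all vertices of its tree $T_{n-1}$ at levels $1, \ldots, n-2$ (these correspond to the fixed levels $2, \ldots, n-1$ of $T_n$ below $(\ell_1)$) and acts trivially outside the corresponding level-$(n-2)$ subtree, so the induction hypothesis applies to $\sigma_{\ell_1}$ and shows that its action on the relevant $d$ leaves is even. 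This action is exactly the action of $g$ on the leaves of $T_n^i$, which finishes the induction and hence the lemma.

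The one genuine subtlety, and the reason the statement is not immediate from the definition of $E_n$, is that the wreath-product sign $\sgn_2$ only tests the top two levels of the tree: for $n \geq 3$ the projection $\pi_2(\sigma)$ of an element $\sigma \in N_{n, \overline{\QQ}}$ is trivial, so $\sgn_2(\sigma) = 1$ carries \emph{no} direct information about the action deep down at level $n$. The true content of the lemma is a sign condition attached to the level-$(n-2)$ vertex sitting just above $T_n^i$, which is encoded recursively in the membership $\sigma_{\ell_1} \in E_{n-1}$ rather than in $\sgn_2$ itself. The induction above is precisely the bookkeeping that extracts this deep sign condition; I expect this to be the only step requiring care, the remainder being a routine unwinding of the wreath-product notation.
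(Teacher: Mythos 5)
Your proof is correct and follows essentially the same route as the paper's: induction on $n$, with the base case $n=2$ settled by the wreath-product sign condition $\sgn_2=1$ coming from Corollary~\ref{cor:GnEn}, and the inductive step carried out by peeling off the top level of the wreath decomposition so that the sign condition buried at level $n-2$ is extracted recursively. The only (cosmetic) difference is that you run the induction inside $E_n$, using $E_n \subseteq E_{n-1}\wr E_1$ from Definition~\ref{def:En}, whereas the paper runs it inside $G_{n,\QQb}$ via the containment $G_{n,\QQb} \subseteq G_{n-1,\QQb}\wr G_{1,\QQb}$; your version thus isolates the marginally more general, purely group-theoretic statement that any element of $E_n$ fixing levels $1,\dots,n-1$ and supported on a single bottom-level subtree acts evenly on its $d$ leaves.
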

\begin{proof}
We prove this for $i=1$; the proof for other $i$ is identical. 
First let $n=2$. If $\sigma=(\rho^1,-,\ldots,-)$ is an element of $N_{2,\overline{\mathbb{Q}}}^i$, then $\sgn_2(\sigma)=\sgn(\rho^1)=1$ by Corollary~\ref{cor:GnEn}. 
 Now let $n \geq 2$. If $\sigma=(\rho^1,\ldots,\rho^{d^{n-1}})$ is in $N_{n,\overline{\mathbb{Q}}}$, then $(\rho^1,\ldots,\rho^{d^{n-2}})$ is an element of $N_{n-1,\overline{\mathbb{Q}}}$, since $G_n$ can be identified with a subgroup of $G_{n-1} \wr G_1$ for any $n\geq 2$. Hence the proof follows by induction on ~$n$.
\end{proof}

In the rest of this section, we fix a normalized Belyi map $f$ of type $\underline{C}=(d; e_1, e_2,
e_3)$. We use Proposition \ref{prop:gensys} to construct suitable
elements in $N_{n, \overline{\QQ}}$. This is a first step towards determining $G_{n,\QQb}$ in Theorem \ref{thm:G2alt}.

\begin{lemma}\label{alphas}
Let $f$ be a normalized Belyi map of type $(d; e_1, e_2, e_3)$ and let $(g_{1,1}, g_{2,1}, g_{3,1})$ be a generating system of $f$.  For any
$n\geq 2$ and $j\in \{1,2,3\}$ we define
\[
\alpha_{j,n}:=(g_{j,n})^{e_j^{n-1}}.
\]
\begin{enumerate}
\item
Then
\[
\alpha_{j,n}\in N_{n, \overline{\QQ}}.
\]
\item
Write $\rho_j^i$ for the component of $\alpha_{j,n}$ in $\mathcal{S}(T_n^i)$
 using the notation from \eqref{eq:rhodef}.
Then
\[
\rho_{j}^i=\begin{cases}
g_{j,1}& \text{ if }i=1+\sum_{k=1}^{n-1}(\ell_k-1)d^{k-1} \text{ with $\ell_k\in\supp(g_{j,1})$ for all $k$, }\\
- & \text{ otherwise}.
\end{cases}
\]
\item Conjugation by the elements of $G_{n, \QQb}$ acts transitively
  on $N_{n, \overline{\QQ}}^1, \ldots, N_{n, \overline{\QQ}}^{d^{n-1}}$ for all $n\geq 2$. 

\end{enumerate}
\end{lemma}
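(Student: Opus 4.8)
The plan is to compute the power $\alpha_{j,n}=(g_{j,n})^{e_j^{n-1}}$ directly from the inductive description of $g_{j,n}$ in Proposition~\ref{prop:gensys}, using the multiplication rule \eqref{eq:wrconj}, and then to prove all three statements by induction on $n$. Writing $g_{j,n}=(\boldsymbol{\sigma},g_{j,1})$ with a single nontrivial entry $\sigma_{p_j}=g_{j,n-1}$ in the position $p_j$ prescribed by Proposition~\ref{prop:gensys}, the key computational input is the power formula in the wreath product, which follows from \eqref{eq:wrconj} by an easy induction: for $g=(\boldsymbol{\sigma},\tau)$ the $i$-th entry of $g^m$ is $\sigma_i\,\sigma_{(i)\tau}\cdots\sigma_{(i)\tau^{m-1}}$ and the top entry is $\tau^m$. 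Since $g_{j,1}$ is a single $e_j$-cycle it has order $e_j$. A preliminary observation I would record is that the three prescribed positions $1$, $(1)g_{1,1}$ and $(1)g_{1,1}g_{2,1}$ lie in $\supp(g_{1,1})$, $\supp(g_{2,1})$ and $\supp(g_{3,1})$, respectively; this is exactly what forces the power computation to collapse to a single surviving factor.

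With this in hand, the induction proving (1) and (2) runs as follows. For the base case $n=2$ I compute $(g_{j,2})^{e_j}$: its top entry is $g_{j,1}^{e_j}=-$, and for the $i$-th entry the product $\prod_{k=0}^{e_j-1}\sigma_{(i)\tau^k}$ with $\tau=g_{j,1}$ sweeps the $\tau$-orbit of $i$. If $i\in\supp(g_{j,1})$ this orbit is all of $\supp(g_{j,1})$ and passes through $p_j$ exactly once, contributing the single factor $g_{j,1}$; if $i\notin\supp(g_{j,1})$ then $\tau$ fixes $i$ and the entry is $\sigma_i^{e_j}=-$. This is precisely the case distinction in (2) for $n=2$. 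For the inductive step I first form $(g_{j,n})^{e_j}$ by the same orbit argument, obtaining an element with trivial top entry whose $i$-th entry is $g_{j,n-1}$ for $i\in\supp(g_{j,1})$ and trivial otherwise; raising this to the power $e_j^{n-2}$ acts entrywise (the top being trivial), producing $\alpha_{j,n-1}=(g_{j,n-1})^{e_j^{n-2}}$ in the subtree $T_n^i$ for each $i\in\supp(g_{j,1})$ and the identity elsewhere. Feeding in the inductive hypothesis for $\alpha_{j,n-1}$ then shows that the surviving components $g_{j,1}$ occur exactly at those $i$ whose digits $\ell_1,\dots,\ell_{n-1}$ all lie in $\supp(g_{j,1})$, which is (2); statement (1) is then immediate, since every component lies in some $\mathcal{S}(T_n^i)$, so $\alpha_{j,n}$ fixes all vertices at levels $1,\dots,n-1$ and hence lies in $N_{n,\overline{\QQ}}$.

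For (3) the idea is that conjugation permutes the subgroups $N_{n,\overline{\QQ}}^i$ according to the action of $G_{n,\overline{\QQ}}$ on the level-$(n-1)$ vertices. Concretely, for $g\in G_{n,\overline{\QQ}}$ the conjugate $g^{-1}\mathcal{S}(T_n^i)g$ equals $\mathcal{S}(T_n^{i'})$, where $i'$ is the image of the vertex labelled $i$ under $\pi_{n-1}(g)$; since $N_{n,\overline{\QQ}}$ is normal in $G_{n,\overline{\QQ}}$ (it is $\ker\pi_{n-1}$), intersecting gives $g^{-1}N_{n,\overline{\QQ}}^i g=N_{n,\overline{\QQ}}^{i'}$. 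Thus the conjugation action factors through the permutation action of $\pi_{n-1}(G_{n,\overline{\QQ}})=G_{n-1,\overline{\QQ}}$ on the leaves of $T_{n-1}$. This action is transitive because the cover $f^{n-1}$ is connected (its source is $\PP^1$), so its geometric monodromy group $G_{n-1,\overline{\QQ}}$ acts transitively on the sheets; transitivity of the conjugation action on the $N_{n,\overline{\QQ}}^i$ follows.

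The main obstacle I anticipate is the bookkeeping in the inductive step of (2): one must keep straight the two different decompositions of $T_n$ — the wreath decomposition used to write $g_{j,n}$, where $\Aut(T_1)$ sits on top, versus the indexing of the subtrees $T_n^i$ by level-$(n-1)$ vertices — and verify that raising $(g_{j,n})^{e_j}$ to the power $e_j^{n-2}$ really does reproduce $\alpha_{j,n-1}$ inside each surviving subtree with the leading digit $\ell_1$ peeled off. The other delicate point is the preliminary claim that the prescribed insertion positions lie in the supports $\supp(g_{j,1})$: without it the power $(g_{j,n})^{e_j}$ would collapse to the identity in those entries and the component count would be wrong, so this property of the chosen generating system genuinely has to be checked rather than assumed.
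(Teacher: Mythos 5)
Your proposal is correct and follows essentially the same route as the paper: your orbit/power computation in the wreath product is exactly the unwritten calculation behind the paper's equation \eqref{eq:alpha} (that $\alpha_{j,n}$ has trivial top level and components $\alpha_{j,n-1}$ precisely at the positions indexed by $\supp(g_{j,1})$), from which (1) and (2) follow by the same induction on $n$, and the preliminary support condition on the insertion positions that you flag is indeed the tacit hypothesis that makes both your computation and the paper's collapse to a single surviving factor. The only (immaterial) divergence is in (3), where the paper inducts from the transitivity of $G_{1,\overline{\QQ}} = G(f)$ on $d$ letters, while you deduce transitivity of $G_{n-1,\overline{\QQ}}$ on the leaves of $T_{n-1}$ directly from connectedness of the cover $f^{n-1}$ and note that conjugation permutes the subgroups $N_{n,\overline{\QQ}}^i$ through $\pi_{n-1}$; both arguments amount to the same fact.
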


\begin{proof}
Let $i\in \{1,2,3\}$ and $n\geq 2$ arbitrary.  Using \eqref{eq:wrconj}
one computes that
\begin{equation}\label{eq:alpha}
\alpha_{j,n}=((\sigma_{1}, \ldots, \sigma_{d}), -), \qquad \text{ where }
\sigma_{i}=\begin{cases} \alpha_{i,n-1}& \text{ if }i\in \supp(g_{j,1}),\\
-&\text{ otherwise.}
\end{cases}
\end{equation}

Expression  \eqref{eq:alpha} for $\alpha_{j,n}$ implies that
$\alpha_{j,n}$ fixes all vertices on level one. If $\alpha_{j,n-1}\in
N_{n-1, \overline{\QQ}}$ then $\alpha_{j,n}$ fixes all vertices of the
tree on levels $2, \ldots,n-1$, as well. It follows that
$\alpha_{j,n}\in N_{n, \overline{\QQ}}$.  Statement~(1) of the
proposition is vacuous for $n=1$. The statement therefore follows by
induction on~$n$.

Moreover, $\alpha_{j,n}$ acts as $\alpha_{j,n-1}$ on the subtree $T_n(\ell_1)$ of
$T_n$ with root $(\ell_1)$ if $\ell_1\in \supp(g_{j,1})$ and acts trivially otherwise. Statement (2) therefore also follows by induction
on $n$.

Statement (3) follows by induction, as well, since $G_{1, \QQb}=G(f)$ is a
transitive subgroup of $S_d$.
\end{proof}

In the following proposition we exclude two types in small degree. For
$\underline{C}=(6; 4,4,5)$ we have that $G_{1, \QQb}$ is $S_5$
(embedded as a transitive group in $S_6$), hence is isomorphic to
neither $S_6$ nor $A_6$. For $\underline{C}=(4; 3,3,3)$ we have that
$G_{1, \QQb}\simeq A_4$. However in this case Proposition
\ref{prop:betas} fails. (In the case that $\underline{C}=(4; 3,3,3)$ the
group $N_{n, \QQb}^i$ is the Klein $4$-group.)

\begin{proposition}\label{prop:betas}
Let $f$ be a normalized Belyi map of type $\underline{C}$. Assume that $\underline{C}\notin\{ (6; 4,4,5), (4;3,3,3)\}$.  Then 
\[
N_{n, \overline{\QQ}}^i\simeq A_d
\]
for all $1 \leq i \leq d^{n-1}$ and all $n\geq 2$.
\end{proposition}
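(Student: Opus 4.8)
The plan is to prove $N_{n,\overline{\QQ}}^i\simeq A_d$ by a two-step strategy: first establish that $N_{n,\overline{\QQ}}^i$ is a \emph{transitive} subgroup of $S_d$ containing enough explicit cycles to force it to be large, and then invoke the known structure of $G_{1,\overline{\QQ}}=G(f)$ together with Lemma~\ref{lem:Ni} to pin down the group exactly as $A_d$. By Lemma~\ref{alphas}(3), conjugation by $G_{n,\overline{\QQ}}$ permutes the $N_{n,\overline{\QQ}}^i$ transitively, so it suffices to treat a single index, and I will take $i=1$ throughout, matching the convention in the proof of Lemma~\ref{lem:Ni}. Since $N_{n,\overline{\QQ}}^1\subseteq A_d$ is already known, the entire content is the reverse containment $A_d\subseteq N_{n,\overline{\QQ}}^1$.

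\smallskip
First I would produce explicit cycles inside $N_{n,\overline{\QQ}}^1$ coming from the elements $\alpha_{j,n}$ of Lemma~\ref{alphas}. By Lemma~\ref{alphas}(1),(2), each $\alpha_{j,n}$ lies in $N_{n,\overline{\QQ}}$ and its component $\rho_j^1$ in $\mathcal{S}(T_n^1)$ equals $g_{j,1}$ precisely when the index $i=1$ is built from the label $(\ell_1,\dots,\ell_{n-1})$ with every $\ell_k\in\supp(g_{j,1})$. Using the explicit generating system of Lemma~\ref{lem:gensys}(2), the point $1$ lies in $\supp(g_{1,1})$, and by choosing which of the three generators contributes at each level one obtains, for a suitable index $i$, components that realize $g_{1,1}$, $g_{2,1}$, or $g_{3,1}$ (single cycles of lengths $e_1,e_2,e_3$) as honest elements of $N_{n,\overline{\QQ}}^i$. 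The key point is that because $N_{n,\overline{\QQ}}$ is a \emph{direct sum} of its components $N_{n,\overline{\QQ}}^i$ by \eqref{eq:rhodef}, each component $\rho_j^i$ of $\alpha_{j,n}$ individually lies in $N_{n,\overline{\QQ}}^i$; projecting onto a fixed index then deposits the cycles $g_{1,1},g_{2,1},g_{3,1}$ (up to the $A_d$-conjugacy induced by Lemma~\ref{alphas}(3)) into the single group $N_{n,\overline{\QQ}}^1$.

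\smallskip
With these single cycles in hand, I would invoke the generation result behind Lemma~\ref{lem:gensys}(1): the triple $(g_{1,1},g_{2,1},g_{3,1})$ generates $G(f)$, which is $S_d$ or $A_d$ for all types outside the two excluded cases $(6;4,4,5)$ and $(4;3,3,3)$. Consequently the subgroup of $N_{n,\overline{\QQ}}^1$ generated by the deposited cycles contains a conjugate (inside $S_d$) of $G(f)$; but by Lemma~\ref{lem:Ni} everything in $N_{n,\overline{\QQ}}^1$ already sits inside $A_d$, so even when $G(f)=S_d$ the relevant generated subgroup is forced into $A_d$ and, being a transitive group containing a full set of Belyi-cycle generators, must equal $A_d$. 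The exclusion of $(4;3,3,3)$ is exactly where $G(f)\simeq A_4$ is too small and the $e_i$ are all equal to $3$, so the deposited cycles only generate a Klein four-group; excluding it (as the statement does) removes the one case where this generation argument fails, and the $(6;4,4,5)$ exclusion removes the case where $G(f)$ is not even $S_d$ or $A_d$.

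\smallskip
The main obstacle I anticipate is the bookkeeping needed to guarantee that the three single cycles $g_{1,1},g_{2,1},g_{3,1}$ can be made to land in the \emph{same} component $N_{n,\overline{\QQ}}^1$ (or in components that are conjugate under $G_{n,\overline{\QQ}}$ so as to land there after conjugation), rather than in three unrelated components. This is where Lemma~\ref{alphas}(2)'s support condition and Lemma~\ref{alphas}(3)'s transitivity must be combined carefully: one needs an index $i$ (equivalently a label $(\ell_1,\dots,\ell_{n-1})$) that simultaneously witnesses all three generators, or else one transports the cycles into a common component using conjugation by suitable elements of $G_{n,\overline{\QQ}}$ and checks that the resulting conjugates still generate (a conjugate of) $G(f)$ inside $A_d$. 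I expect an induction on $n$ to organize this cleanly, with the base case $n=2$ handled directly from the $\alpha_{j,2}$ and the inductive step propagating the conclusion from $N_{n-1,\overline{\QQ}}^i$ to $N_{n,\overline{\QQ}}^i$ via the wreath-product structure \eqref{eq:wrauto} and the recursion \eqref{eq:alpha}.
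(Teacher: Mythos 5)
Your proposal has a genuine gap at its central step: the claim that, because of \eqref{eq:rhodef}, ``each component $\rho_j^i$ of $\alpha_{j,n}$ individually lies in $N_{n,\overline{\QQ}}^i$'' is false. Equation \eqref{eq:rhodef} only says that elements of $N_{n,\overline{\QQ}}$ can be written as tuples of components in the $\mathcal{S}(T_n^i)$; it does \emph{not} say that $N_{n,\overline{\QQ}}$ equals the direct sum $\oplus_i N_{n,\overline{\QQ}}^i$, and in fact the containment is strict whenever $G_{1,\QQb}\simeq S_d$: Lemma~\ref{lem:conjH} computes $[(S_d)^{d^{n-1}}:N_{n,\overline{\QQ}}]=2^{d^{n-2}}$, whereas $\oplus_i N_{n,\overline{\QQ}}^i\simeq (A_d)^{d^{n-1}}$ has index $2^{d^{n-1}}$; the quotient is exactly the block-condition subspace $\mathcal{H}\subseteq\FF_2^{d^{n-1}}$. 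Concretely, if some $e_j$ is even (which happens precisely when $G_{1,\QQb}\simeq S_d$), then $g_{j,1}$ is an $e_j$-cycle and hence an \emph{odd} permutation, so by Lemma~\ref{lem:Ni} — which you yourself cite — it can never be an element of $N_{n,\overline{\QQ}}^i\subseteq A_d$. Your plan to ``deposit'' the three cycles $g_{1,1},g_{2,1},g_{3,1}$ into a single component is therefore impossible in exactly the main case, and the subsequent assertion that these cycles generate a conjugate of $G(f)=S_d$ inside $A_d$ is self-contradictory. (In the all-odd case the parity obstruction disappears, but there the direct-sum decomposition $N_{n,\overline{\QQ}}=\oplus_i N_{n,\overline{\QQ}}^i$ is a consequence of Theorem~\ref{thm:G2alt}(2), which rests on the very proposition being proved, so using it would be circular.)

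The difficulty you flag in your last paragraph — getting a non-trivial element into a \emph{single} component — is indeed the crux, and the paper resolves it by a mechanism your proposal is missing: instead of projecting, it forms the iterated commutator
\[
\beta_n=[\alpha_{1,n},[\alpha_{2,n},\alpha_{3,n}]],
\]
which genuinely lies in $N_{n,\overline{\QQ}}$ (being a product of elements of $N_{n,\overline{\QQ}}$), and whose component at an index $i=1+\sum_k(\ell_k-1)d^{k-1}$ is $[g_{1,1},[g_{2,1},g_{3,1}]]$ if every $\ell_k$ lies in $\supp(g_{1,1})\cap\supp(g_{2,1})\cap\supp(g_{3,1})$ and trivial otherwise. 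With the explicit generating system of Lemma~\ref{lem:gensys}(2) that triple intersection is the single point $\{e_3\}$, so $\beta_n$ is supported on the unique subtree $T_n^{i_0}$ with $i_0=1+\sum_{k}(e_3-1)d^{k-1}$, i.e., $\beta_n\in N_{n,\overline{\QQ}}^{i_0}$, and one checks directly that $[g_{1,1},[g_{2,1},g_{3,1}]]\neq\mathrm{id}$ (note a commutator of elements of $S_d$ is automatically even, so no parity obstruction arises). The conclusion is then reached not by your transitivity-plus-generation argument but by normality: conjugating $\beta_n$ by the $\alpha_{j,n}$, whose $i_0$-components are the $g_{j,1}$ generating $G_{1,\QQb}\supseteq A_d$, shows $N_{n,\overline{\QQ}}^{i_0}$ is a non-trivial normal subgroup of $A_d$, hence equals $A_d$ by simplicity for $d\geq 5$, with $d\in\{3,4\}$ checked case by case; Lemma~\ref{alphas}(3) then transports the result to all $i$. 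To repair your write-up you would need to replace the projection step by this (or an equivalent) construction of an element of $N_{n,\overline{\QQ}}$ supported on one subtree, and replace the generation claim by the normality-plus-simplicity argument.
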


\begin{proof}
Lemma \ref{alphas}(3) states that $G_{n, \QQb}$ acts transitively on
the set of subgroups $ N_{n, \overline{\QQ}}^i$ for $i=1,\ldots
d^{n-1}$. Therefore it suffices to prove the proposition for a
specific value of $i$. We prove that $N_{n, \overline{\QQ}}^{i_0}$ is
a non-trivial normal subgroup of $G_{1, \overline{\QQ}}$ for some
value $i_0$, by showing it is normal and contains a non-trivial
element $\beta_n$.  Since $A_d$ is simple for $d \geq 5$ and $N_{n,
  \overline{\QQ}}^i \leq A_d$ for all $i$ by Lemma~\ref{lem:Ni}, it follows
that $ N_{n, \overline{\QQ}}^{i_0}=A_d$. In the remaining cases $d\in
\{3,4\}$ the statement can be shown by treating each type separately.

\bigskip
\textbf{Claim 1}: 
Let $i_0:=1+\sum_{k=1}^{n-1}(e_3-1)d^{k-1}$.
There exists a non-trivial element $\beta_n\in N_{n, \overline{\QQ}}^{i_0}$, hence $N_{n, \overline{\QQ}}^{i_0}$ is non-trivial. \\
Define
\begin{equation}\label{beta}
\beta_n:=[\alpha_{1,n}, [\alpha_{2,n},\alpha_{3,n}]] =\alpha_{1,n}[\alpha_{2,n},\alpha_{3,n}]\alpha_{1,n}^{-1}[\alpha_{2,n},\alpha_{3,n}]^{-1}, 
\end{equation} 
where $[\alpha_{2,n},
  \alpha_{3,n}]=\alpha_{2,n}\alpha_{3,n}\alpha_{2,n}^{-1}\alpha_{3,n}^{-1}$
is the commutator.
 Recall that
\[ 
\alpha_{j,n}=((\sigma_1,\ldots, \sigma_d), -) \in G_{n-1}\wr G_1,
\]  where $\sigma_i=\alpha_{j,n-1} \in G_{n-1}$ for $i$ in the support of $g_{j,1}$ and trivial otherwise. 
Since the generating system is weakly rigid, to prove the claim it
suffices to prove that the element $\beta_n$ is a non-trivial element
in $N_n^{i_0}$ for the generating system of Lemma
\ref{lem:gensys}(2). We use this generating system for the rest of
this proof. 

With this choice we have
\[
\supp(g_{1})\cap \supp(g_{2})\cap \supp(g_{3})=\{e_3\}.
\]
By induction we find that the component $\rho^i$ of $\beta_n$ in
$N_{n,\QQb}^i$ satisfies
\[
\rho^i=\begin{cases} [g_{1},[g_{2},g_{3}]]& \text{ if }
i=i_0,\\
-&\text{ otherwise}.
\end{cases}
\]
Hence $\beta_n \in N_{n, \overline{\QQ}}^{i_0}$ as claimed.

To show that $\beta_n$ is non-trivial, it suffices to show that
$[g_{1,1},[g_{2,1},g_{3,1}]]$ is non-trivial.  This is an explicit
calculation using the generating system in Lemma \ref{lem:gensys}(2). 
For instance, one checks that $e_3$ is not sent to itself.
  We conclude that $\beta_n$ is non-trivial, and Claim 1 follows.

\bigskip
\textbf{Claim 2}: $N_{n, \overline{\QQ}}^{i_0}$ is a normal subgroup of
$A_d$.
 
By Lemma~\ref{lem:Ni}, $N_{n,\QQb}^{i_0}$ is a subgroup of $A_d$. It follows from~\eqref{eq:wrconj}
that the conjugates of $\beta_n$ by the elements $\alpha_{j,n}\in N_{n, \overline{\QQ}}$
are also in~$N_{n, \overline{\QQ}}^{i_0}$ for $j\in \{1,2,3\}$.  The group
$G_{1, \QQb}$ is generated by the $g_{j,1}$.  Since $g_{j,1}$ is the
component of $\alpha_{j,n}$ in $N_{n, \overline{\QQ}}^{i_0}$ for  $j\in \{1,2,3\}$,
the group $N_{n, \overline{\QQ}}^{i_0}$ contains the element $\sigma^{-1}
\rho^{i_0} \sigma$ for all $\sigma \in G_{1, \QQb}$. Since $A_d\subseteq
G_{1,\QQb}$ we conclude that $N_{n, \overline{\QQ}}^{i_0}$ is a normal
subgroup of $A_d$. This proves Claim $2$.

As explained in the beginning of the proof, the statement for $d\geq
5$ follows from Claims $1$ and $2$. The remaining cases  can be checked separately. 
\end{proof}

\subsection{Determination of $G_{n, \QQb}$ for normalized Belyi maps}\label{sec:GnQQb}

Let $f$ be a normalized Belyi map of type $(d; e_1, e_2,
e_3)$.  In this section, we completely determine the group structure
of the groups $G_{n, \QQb}$ (Definition~\ref{def:Gn}) as a subgroup of
$\Aut(T_n)$. We refer to Lemma~\ref{lem:gensys} for the description of
$G_{1, \QQb}$.

\begin{theorem}\label{thm:G2alt}
Let $f$ be a normalized Belyi map of type $\underline{C}=(d; e_1, e_2,
e_3)\notin\{(4;3,3,3), (6;4,4,5)\}$.  
\begin{enumerate}
\item Assume that $G_{1,\QQb}\simeq S_d$.  Then
\[
G_{n, \QQb} \simeq E_{n}.
\]
\item Assume that $G_{1,\QQb}\simeq A_d$, i.e., that all $e_j$ are odd. Then 
$G_{n,\QQb}$ is the $n$-fold iterated wreath product of~$A_d$ with itself.
\end{enumerate}
\end{theorem}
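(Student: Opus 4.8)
The plan is to prove both statements by induction on $n$, exploiting the wreath-product decomposition $\Aut(T_n) \simeq \Aut(T_{n-1})\wr\Aut(T_1)$ together with the exact sequence
\[
1 \to N_{n,\QQb} \to G_{n,\QQb} \xrightarrow{\pi_{n-1}} G_{n-1,\QQb} \to 1.
\]
The base case $n=1$ is just Lemma~\ref{lem:gensys}(1), which tells us $G_{1,\QQb}$ is $S_d$ (when some $e_j$ is even) or $A_d$ (when all $e_j$ are odd). For the inductive step, the key observation is that $G_{n,\QQb}$ is completely pinned down by two pieces of data: its image under $\pi_{n-1}$, which is $G_{n-1,\QQb}$ by construction, and its kernel $N_{n,\QQb}$. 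The strategy is therefore to identify $N_{n,\QQb}$ exactly, then assemble the group from the short exact sequence.

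First I would determine the kernel $N_{n,\QQb}$. Recall from \eqref{eq:rhodef} that $N_{n,\QQb}$ sits inside $(S_d)^{d^{n-1}}$ as the product of its components $N_{n,\QQb}^i = N_{n,\QQb}\cap\mathcal{S}(T_n^i)$. Proposition~\ref{prop:betas} already shows $N_{n,\QQb}^i \simeq A_d$ for each $i$ (under our hypothesis $\underline{C}\notin\{(4;3,3,3),(6;4,4,5)\}$). The subtle point is that $N_{n,\QQb}$ need not be the full direct product $(A_d)^{d^{n-1}}$: the components may be linked by sign relations. Here the two cases diverge. In the $A_d$ case, each $\alpha_{j,n}$ and each $\beta_n$ already lies in $A_d$ componentwise with no sign obstruction, so one shows $N_{n,\QQb} = (A_d)^{d^{n-1}}$ is the full product, whence $G_{n,\QQb} = G_{n-1,\QQb}\wr A_d$ by induction equals the $n$-fold iterated wreath product of $A_d$. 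In the $S_d$ case, the wreath-product sign $\sgn_2$ imposes exactly one linear relation coming from Corollary~\ref{cor:GnEn} ($G_{n,\QQb}\subseteq E_n$), and I would show this is the only relation, so that $N_{n,\QQb} = (E_{n-1}\wr E_1)\cap\ker(\sgn_2)$ matches Definition~\ref{def:En} precisely.

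The main obstacle is proving that $N_{n,\QQb}$ is as large as the sign constraint allows, i.e.~the reverse inclusion showing no \emph{extra} relations among the components survive. The containment $G_{n,\QQb}\subseteq E_n$ (Corollary~\ref{cor:GnEn}) gives the upper bound for free in case (1); the work is the lower bound. The cleanest route is a counting/order argument: using the transitivity of the conjugation action from Lemma~\ref{alphas}(3) together with the fact that each $N_{n,\QQb}^i\simeq A_d$ is a full copy, one shows $N_{n,\QQb}$ surjects onto a subgroup of $(A_d)^{d^{n-1}}$ whose image under the collection of componentwise signs is all of $\ker(\text{total sign})$ in case (1) or all of $(A_d)^{d^{n-1}}$ in case (2). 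Since $A_d$ is perfect for $d\geq 5$, its abelianization is trivial and the sign map on each factor is already accounted for, which rigidifies the argument; the low-degree cases $d=3,4$ (excluding $(4;3,3,3)$) must be handled by direct inspection, exactly as in Proposition~\ref{prop:betas}. Once $N_{n,\QQb}$ is identified, comparing $|G_{n,\QQb}| = |N_{n,\QQb}|\cdot|G_{n-1,\QQb}|$ against $|E_n| = |E_{n-1}|^d\cdot|E_1|/2$ (the order formula from the proof of Lemma~\ref{lem:En}) closes the induction.
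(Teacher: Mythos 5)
Your skeleton — induction via the exact sequence $1 \to N_{n,\QQb} \to G_{n,\QQb} \xrightarrow{\pi_{n-1}} G_{n-1,\QQb} \to 1$, identification of the kernel, and an order comparison against $E_n$ — is exactly the paper's strategy, and your case (2) is correct as sketched. But case (1) contains a genuine error: you miscount the sign constraints on the kernel. The wreath-product sign does \emph{not} impose ``exactly one linear relation'' on $N_{n,\QQb}$ once $n\geq 3$. Unwinding the recursive Definition~\ref{def:En}, membership in $E_n$ amounts to one sign condition at every vertex of levels $0,\dots,n-2$ (this is why $[\Aut(T_n):E_n]=2^{d^{n-2}+\cdots+d+1}$ in Lemma~\ref{lem:En}(1)); on the kernel of $\pi_{n-1}$ the conditions at levels $\leq n-3$ become vacuous, and what survives are precisely the $d^{n-2}$ \emph{block} conditions: writing $N_{n,\QQb}\subseteq (S_d)^{d^{n-1}}$ as in \eqref{eq:rhodef}, for each vertex at level $n-2$ the product of the signs of the $d$ components $\rho^i$ in the corresponding block must be trivial. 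Hence $[(S_d)^{d^{n-1}}:N_{n,\QQb}]=2^{d^{n-2}}$ (this is Lemma~\ref{lem:conjH}), not $2$. If the image of $N_{n,\QQb}$ under componentwise signs were all of $\ker(\text{total sign})$ as you assert, the order count would give $|G_{n,\QQb}|=2^{d^{n-2}-1}\,|E_n|$ for $n\geq 3$, contradicting $G_{n,\QQb}\subseteq E_n$ (Corollary~\ref{cor:GnEn}); your induction cannot close. (For $n=2$ your count is right, which is likely the source of the slip; also, your displayed identification ``$N_{n,\QQb}=(E_{n-1}\wr E_1)\cap\ker(\sgn_2)$'' literally equates the kernel with $E_n$ itself.)

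The second, related gap is that the lower bound — that $N_{n,\QQb}$ is as large as the block conditions permit — is the real content of the theorem, and ``transitivity from Lemma~\ref{alphas}(3) plus perfectness of $A_d$'' does not deliver it. Lemma~\ref{alphas}(3) gives transitivity of conjugation on the component subgroups $N_{n,\QQb}^i$, whereas what is needed is that the image of $N_{n,\QQb}$ under the componentwise sign map $\chi$ spans the full intersection $\mathcal{H}$ of the $d^{n-2}$ block hyperplanes in $\FF_2^{d^{n-1}}$. The paper's Lemma~\ref{lem:conjH} does this by choosing an even $e_s\neq d$, observing that the sign vector of $\alpha_{s,n}$ has $e_s^{n-1}$ ones distributed blockwise, invoking the inductive hypothesis $G_{n-1,\QQb}\simeq E_{n-1}$ to show that the $G_{n,\QQb}$-orbit of this vector consists of \emph{all} vectors satisfying the block condition, and then an explicit linear-algebra argument (producing the vectors $\xi_{k,\ell;j}$ supported on two entries of a single block as $\xi_{\mathcal{I}}+\xi_{\mathcal{I}'}$) to show these orbit vectors span $\mathcal{H}$. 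Note that the inductive hypothesis enters here in an essential way — it is needed to know which coordinate permutations are available — a subtlety entirely absent from your sketch. To repair the proof, replace ``one relation'' by the block conditions and supply this spanning argument; the rest of your outline then matches the paper.
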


The key step in the proof of Theorem \ref{thm:G2alt} is determining
the size of the group $G_{n, \QQb}$.

\begin{lemma}\label{lem:conjH}
Let $f$ be a normalized Belyi map. 
Fix an integer $n\geq 2$ and assume that $E_{n-1} \simeq G_{n-1,
  \QQb}$.  Then
\[
[(S_d)^{d^{n-1}}:N_{n, \overline{\QQ}}]=2^{d^{n-2}}.
\]
\end{lemma}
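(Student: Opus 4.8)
The plan is to determine $N_{n,\QQb}$ by passing to its image under the coordinatewise sign map and exploiting the wreath recursion, arguing by induction on $n$. First I record the ambient picture: $N_{n,\QQb}=\ker(\pi_{n-1})$ consists exactly of the automorphisms fixing levels $1,\dots,n-1$, so it sits in $(S_d)^{d^{n-1}}$ as in \eqref{eq:rhodef}. The hypothesis $E_{n-1}\simeq G_{n-1,\QQb}$ together with $G_{n-1,\QQb}\subseteq E_{n-1}$ (Corollary \ref{cor:GnEn}) forces the equality $G_{n-1,\QQb}=E_{n-1}$ of subgroups of $\Aut(T_{n-1})$; projecting to level $1$ then gives $G_{1,\QQb}=\pi_1(E_{n-1})=S_d$, so in particular $\underline C\notin\{(4;3,3,3),(6;4,4,5)\}$ and Proposition \ref{prop:betas} applies. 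Since the $N_{n,\QQb}^i\simeq A_d$ commute and are supported on disjoint leaf-blocks, their internal product is $(A_d)^{d^{n-1}}\subseteq N_{n,\QQb}$. Thus $N_{n,\QQb}$ is determined by its image $\overline N$ in $V:=(S_d)^{d^{n-1}}/(A_d)^{d^{n-1}}\cong\FF_2^{\,d^{n-1}}$, and $[(S_d)^{d^{n-1}}:N_{n,\QQb}]=2^{\operatorname{codim}\overline N}$; the goal reduces to $\operatorname{codim}\overline N=d^{n-2}$.

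For the upper bound I would use the top wreath decomposition $G_{n,\QQb}\subseteq G_{n-1,\QQb}\wr G_{1,\QQb}$ together with $\pi_{n-1}((\sigma_j)_j,\tau)=((\pi_{n-2}\sigma_j)_j,\tau)$: an element of $N_{n,\QQb}$ has $\tau=-$ and each $\sigma_j\in\ker(\pi_{n-2}\colon G_{n-1,\QQb}\to G_{n-2,\QQb})=N_{n-1,\QQb}$, whence $N_{n,\QQb}\subseteq (N_{n-1,\QQb})^d$. By the case $n-1$ of the lemma (its hypothesis $G_{n-2,\QQb}=E_{n-2}$ follows by applying $\pi_{n-2}$ to $G_{n-1,\QQb}=E_{n-1}$), the index of $N_{n-1,\QQb}$ in $(S_d)^{d^{n-2}}$ is $2^{d^{n-3}}$, so $[(S_d)^{d^{n-1}}:(N_{n-1,\QQb})^d]=2^{d^{n-2}}$. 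Consequently $\overline N$ is contained in the block-sum-zero subspace $W=\{v:\sum_{i\in B_w}v_i=0\text{ for every level-}(n-2)\text{ vertex }w\}$, which has codimension $d^{n-2}$ (one independent relation per level-$(n-2)$ block $B_w$ of $d$ leaves). This already gives index $\ge 2^{d^{n-2}}$.

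The reverse inclusion $\overline N=W$ is the heart of the matter and the step I expect to be the main obstacle: it says precisely that $N_{n,\QQb}$ is as large as the corresponding kernel of $\pi_{n-1}$ on $E_n$, i.e.\ $(N_{n-1,\QQb})^d\subseteq G_{n,\QQb}$. The base case $n=2$ carries the essential mechanism. Because $e_1+e_2+e_3=2d+1$ is odd, an odd number of the $e_j$ are odd; as $G_{1,\QQb}=S_d$ rules out all three being odd, exactly two of the $e_j$ are even, and since $e_1\le e_2<d$ always holds, there is an even $e_j$ with $j\in\{1,2\}$ and $e_j<d$. For this $j$ the element $\alpha_{j,2}\in N_{2,\QQb}$ of Lemma \ref{alphas} has sign image equal to the indicator of $\supp(g_{j,1})$, a sum-zero vector of weight strictly between $0$ and $d$; its $S_d$-orbit spans all of $W$, giving $\overline N=W$ and index $2=2^{d^{0}}$. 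For the inductive step I would exploit that conjugation by $G_{n,\QQb}$ preserves $N_{n,\QQb}$ and acts on $\overline N$ through $G_{n-1,\QQb}=E_{n-1}$ by permuting coordinates, that $W=\bigoplus_w W_0^{(w)}$ with $E_{n-1}$ permuting the summands $W_0^{(w)}$ (the sum-zero patterns on $B_w$), and that $E_{n-1}$ is transitive on the level-$(n-2)$ vertices; thus it suffices to show $\overline N\supseteq W_0^{(w_0)}$ for a single $w_0$. The delicate point is isolating and spanning one block: I would combine $E_{n-1}$-translates and sums of the explicit vectors $\overline{\alpha_{j,n}}$ with the abundant $A_d$-subgroups $N_{n,\QQb}^i$, and invoke the self-similarity of the tower (the restriction to a level-$1$ subtree of the stabilizer of the corresponding level-$1$ vertex recovers $G_{n-1,\QQb}$) to reduce this block statement to the already-established case one level down. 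Matching upper and lower bounds yields $\operatorname{codim}\overline N=d^{n-2}$ and hence the asserted index $2^{d^{n-2}}$.
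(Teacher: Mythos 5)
Your overall architecture is essentially that of the paper: you quotient $N_{n,\QQb}$ by $(A_d)^{d^{n-1}}$ via Proposition~\ref{prop:betas}, identify the lemma with the statement $\overline{N}=W$ (the paper's $\mathcal{H}$), and plan to span $W$ using conjugation-translates of the sign vectors of the $\alpha_{j,n}$ from Lemma~\ref{alphas} together with block transitivity. Your base case $n=2$ is complete and correct, and your upper bound via $N_{n,\QQb}\subseteq(N_{n-1,\QQb})^d$ and induction is a valid, slightly different packaging of what the paper draws from $G_{n,\QQb}\subseteq E_n$. (Minor slip: at $n=2$ the recursion is vacuous since $N_{1,\QQb}=S_d$, so the containment $\overline{N}\subseteq W$ there must come directly from Corollary~\ref{cor:GnEn} and the $\sgn_2$ condition; you have this available but do not say it.)

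The genuine gap is the inductive lower bound, which you yourself flag as ``the main obstacle'' and leave as a plan rather than a proof. Moreover, the one concrete mechanism you propose for it --- self-similarity, i.e.\ that restricting the stabilizer of a level-$1$ vertex to its subtree recovers $G_{n-1,\QQb}$ --- does not do the job as stated: that restriction is a \emph{quotient} of the stabilizer, so an element whose restriction to the subtree lies in (the copy of) $N_{n-1,\QQb}$ may act nontrivially at intermediate levels \emph{outside} the subtree, hence need not lie in $N_{n,\QQb}$ at all; producing elements trivial everywhere at levels $1,\dots,n-1$ is precisely the content of the lemma, and correcting the outside part requires exactly the explicit manipulations you defer. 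The paper closes this step concretely and without any self-similarity: choosing $s$ with $e_s$ even and $e_s\neq d$, it takes two conjugation-translates of $\alpha_{s,n}$ whose supports agree outside a single block and differ inside it; the sum of their images in $\FF_2^{d^{n-1}}$ is a weight-two vector $\xi_{k,\ell;j}$ supported in one block, and transitivity of $G_{n,\QQb}$ on blocks together with transitivity of $N_{n,\QQb}^1\simeq A_d$ within a block (Proposition~\ref{prop:betas}) then produces all $\xi_{k,\ell;j}$, which span $W$; since $\overline{N}$ is a subgroup, hence an $\FF_2$-subspace, this gives $\overline{N}=W$. Your list of ingredients ($E_{n-1}$-translates, sums, the $A_d$-subgroups) is exactly what this computation uses, so the repair is short --- but as written the decisive step is asserted, not proved, and the proposed reduction ``one level down'' would fail. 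One further caution if you execute the translate argument: the coordinate action of $G_{n-1,\QQb}=E_{n-1}$ is through tree automorphisms of $T_{n-1}$, so it preserves the distribution of a support over subtrees at \emph{every} level, not merely the bottom-block statistics; you must therefore pick the two translates inside the genuine orbit (e.g., modify $\supp(\overline{\alpha}_{s,n})$ within one block by an even permutation supported there), a point on which the paper's own orbit description is itself stated too generously.
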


Note that the assumption that $E_{n-1} \simeq G_{n-1, \QQb}$ for 
$n=1$ states that $G_{1, \QQb}=E_1\simeq S_d$. 

\begin{proof}
Let $\chi$ denote the following homomorphism, induced by the $\sgn$ function on each of the components $N_{n, \overline{\QQ}}^i$: 
\[
\chi: N_{n, \overline{\QQ}} \to \mathbb{F}_2^{d^{n-1}}, \qquad (\rho^i)_{1\leq  i \leq d^{n-1}}\mapsto (\dlog_{-1}(\sgn(\rho^i)))_{1\leq i \leq d^{n-1}},
\]
where the discrete logarithm $\dlog_{-1}$ sends $1\mapsto 0$ and
$-1\mapsto 1$. Proposition \ref{prop:betas} implies that the kernel of $\chi$ equals $ N_{n, \overline{\QQ}}^1\oplus \cdots \oplus N_{n, \overline{\QQ}}^{d^{n-1}} \simeq (A_d)^{d^{n-1}}$. 
Therefore, $\chi$ induces an injection
\[
\overline{\chi}:\overline{N}_{n, \overline{\QQ}}:=N_{n, \overline{\QQ}}/ \left( \oplus_{1 \leq i \leq d^{n-1}}{N_{n, \overline{\QQ}}^i} \right) \hookrightarrow \mathbb{F}_2^{d^{n-1}}.
\]
For the remainder of the proof we identify $\overline{N}_{n, \overline{\QQ}}$ with its image in
$\mathbb{F}_2^{d^{n-1}}$.

Since $e_1+e_2+e_3=2d+1$ is odd and we assume that $G_{1, \QQb}\simeq S_d$, at
least one of the $e_j$ is even. It follows that there is a unique~$j$
such that $e_j$ is odd.  Let $s\in \{1,2, 3\}$ be one of the indices
such that $e_s$ is even. Since at most one of the $e_j$ equals $d$, we
may assume that $e_s\neq d$.

Lemma \ref{alphas}(2) implies that exactly $e_s^{n-1}$ entries of the
element $\alpha_{s,n}\in N_{n, \QQb}$ are $e_s$-cycles, and all other
entries are trivial. It follows that exactly $e_s^{n-1}$ entries of
the corresponding element $\overline{\alpha}_{s,n}\in \overline{N}_{n,
  \overline{\QQ}}$ are non-trivial, i.e.,equal to $1$. Moreover,
dividing the indices once more into $d^{n-2}$ blocks of $d$ indices,
where the $i$th block corresponds to the vertices of $T_{n-1}^i$, we
see that, for each $\overline{\alpha}_{s,n}$, exactly $e_s^{n-2}$ of these blocks contain exactly $e_s$
non-trivial entries. In this proof we denote the $i$th block by $B(i)$. Our choice of numbering implies that\[ 
B(i)=\{1+(i-1)d, 2+(i-1)d, \ldots, d+(i-1)d\},
\]
but we do not need this in what follows.
  A vector $x\in
\FF_2^{d^{n-2}}$ with exactly $e_s^{n-1}$ non-trivial entries
distributed in this way among the blocks is said to \emph{satisfy the
  block condition}.

Equation \eqref{eq:wrconj} implies that conjugation by $G_{n, \QQb}$
yields a well-defined action on $\overline{N}_{n, \overline{\QQ}}$, by
permutation of the coordinates of $x = (x_1, \ldots, x_{d^{n-1}}) \in
\overline{N}_{n, \overline{\QQ}}$ respecting the block structure.  The
assumption that $G_{n-1, \QQb} \simeq E_{n-1}$, together with the
observation that the projection $G_{n,\QQb} \to G_{n-1, \QQb}$ is
surjective, implies that the orbit of $\overline{\alpha}_{s, n}$ under this
action consists of all $x\in \mathbb{F}_2^{d^{n-1}}$ that have exactly
$e_s^{n-1}$ entries equal to $1$ and satisfy the block condition.

For every subset $\mathcal{I} \subseteq \{1,2, \ldots, d^{n-1}\}$ with
$|\mathcal{I}|=e_s^{n-1}$, we denote by $\xi_\mathcal{I}$ the element
of $\mathbb{F}_2^{d^{n-1}}$ such that $(\xi_\mathcal{I})_i = 1$ if $i
\in \mathcal{I}$, and $(\xi_\mathcal{I})_i = 0$ otherwise. With this
notation, the elements in the orbit of $\overline{\alpha}_s$ can be
described as $\xi_\mathcal{I}$ for some $\mathcal{I}$ satisfying the
following properties:
\begin{enumerate} 
   \item For $1\leq i\leq d^{n-2}$, let $\mathcal{I}(i):=\mathcal{I}\cap
     B(i)$. Then $\mathcal{I}(i) \neq \emptyset$ for exactly
     $e_s^{n-2}$ values of $i$, and
   \item if $\mathcal{I}(i) \neq \emptyset$ for some $i$, then
     $|\mathcal{I}(i)|=e_s$. 
\end{enumerate}

Let $\mathcal{H} = \cap_{i=1}^{d^{n-2}} \mathcal{H}_i$ denote the
intersection of all hyperplanes
\[
\mathcal{H}_i:=\{x\in \mathbb{F}_2^{d^{n-1}}\mid \sum_{b\in B(i)} x_{b}=0 \}.
\]
  
\textbf{Claim}: The elements $\xi_\mathcal{I}$ for $\mathcal{I}$
satisfying properties (1) and (2) above generate $\mathcal{H}$.

Fix $k,\ell$ and $i$ such that $1\leq k< \ell \leq d$ and $1\leq
j\leq d^{n-2}$. Let $\xi_{k,\ell;j}$ denote the vector whose entries are
$1$  in the positions $k+(j-1)d$ and $\ell+(j-1)d$ and $0$ otherwise. Note that
$\xi_{k,\ell;j} \in \mathcal{H}$.

To prove the claim it suffices to show that for all $1\leq j\leq
d^{n-2}$ and for all $1\leq k< \ell \leq d$, the vectors $\xi_{k,l;j}$
are in the linear hull of the $\xi_\mathcal{I}$. Using that
$G_{n,\QQb}$ acts transitively on the blocks, it suffices to prove
this for $j=1$. Since $N_{n, \QQb}^1\subseteq G_{n,\QQb}$ acts
transitively on $B(1)$ by Proposition \ref{prop:betas}, we may
moreover assume that $(k, \ell)=(1,2)$. In other words, it suffices to show that $\xi_{1,2;1}\in \mathcal{H}$.

Define 
\[ 
\mathcal{I}(1)=\{1,\ldots,e_{s+1}\}-\{2\} \hspace{2mm} \text{and} \hspace{2mm} \mathcal{I}'(1)=\{1,\ldots,e_{s+1}\}-\{1\}.
\]
For any $2\leq i \leq e_s^{n-2}$, we define the set $\mathcal{I}(i)
= \{b+(i-1)d \mid 1 \leq b \leq e_s\}$.
Then letting $\mathcal{I}= (\cup_{i=2}^{e_s^{n-2}}\mathcal{I}(i)) \cup
\mathcal{I}(1)$ and $\mathcal{I}'=
(\cup_{i=2}^{e_s^{n-2}}\mathcal{I}(i)) \cup \mathcal{I}'(1)$, we see
that $\xi_{1,2;1}=\xi_{\mathcal{I}} +\xi_{\mathcal{I}'}\in \mathcal{H}$.  This proves
the claim.

The claim inductively implies that
\[
[(S_d)^{d^{n-1}}:N_{n, \overline{\QQ}}]=\frac{2^{d^{n-1}}}{|\overline{N}_{n, \overline{\QQ}}|}=2^{d^{n-2}}.
\]
This proves the lemma.
\end{proof}

Now we are ready to prove Theorem~\ref{thm:G2alt}. 

\bigskip\noindent \textit{Proof of Theorem}~\ref{thm:G2alt}.  In
Corollary \ref{cor:GnEn} we have shown that $G_{n, \QQb}$ is a
subgroup of $E_n$ for all $n \geq 1$.  

\textbf{Case 1}: Assume that $G_{1, \QQb}=E_1=\Aut(T_1) \simeq S_d$.  

To prove Statement (1), it suffices to show that the groups $G_{n, \QQb}$ and
$E_n$ have the same cardinality. Since $G_{n, \QQb}$ is a subgroup of
$\Aut(T_n)$ for all $n\geq 1$, the definition of $N_{n, \overline{\QQ}}$ implies  that
\[
\frac{[\Aut(T_n):G_{n, \QQb}]}{[\Aut(T_{n-1}):G_{n-1,
      \QQb}]}=[\left(S_d\right)^{d^{n-1}}:N_{n, \overline{\QQ}}]=2^{d^{n-2}}.
\]
The last equality is the statement of Lemma  \ref{lem:conjH}.
The expression for $[\Aut(T_n):E_n]$ in Lemma~\ref{lem:En}(1) implies that
\[
\frac{[\Aut(T_n):E_n]}{[\Aut(T_{n-1}):E_{n-1}]}=2^{d^{n-2}},
\]
as well.
It follows that $|G_{n, \QQb}|=|E_n|$. This proves the theorem in the first case.

\bigskip
\textbf{Case 2}: Assume that $G_{1, \QQb} \simeq A_d$. 

In this case $G_{n, \QQb}$ is  a subgroup of the $n$-fold
iterated wreath product of $A_d$ with itself.  The statement of the theorem in
this case therefore  follows by induction on $n$ using Proposition
\ref{prop:betas}.

This finishes the proof of the theorem.
\qed

\subsection{Descent}\label{sec:descent}

In this section we determine the groups $G_{n,\QQ}$ (Definition
\ref{def:Gn}) under certain conditions.
Recall from \eqref{eq:GnQQ} that 
\[
G_{n,
  \QQb}\subseteq G_{n,\QQ}\subseteq \Aut(T_n), \qquad \text{ for all }n\geq 1.
\]
Theorem \ref{thm:G2alt} implies that 
\[
[\Aut(T_n):G_{n, \QQb}]=\begin{cases} 2^{d^{n-2}+d^{n-3}+\cdots+d+1}
&\text{ if }G_{1, \QQb}\simeq
S_d,\\ 2^{d^{n-1}+d^{n-2}+\cdots+d+1}&\text{ if }G_{1, \QQb}\simeq A_d
\text{ and }\underline{C}\neq (4;3,3,3).
\end{cases}
\]
If we can show for some normalized Belyi map $f$ that $G_{n,
  \QQb}=G_{n, \QQ}$ for all $n\geq 1$, then we have found an explicit
description of $G_{\infty, \QQ}:=\varprojlim_{n} G_{n, \QQ}$,
and we have shown that this group satisfies $[\Aut(T_\infty):G_{\infty, \QQ}]=\infty$.

\begin{remark}\label{rem:disc}
It is not true in general that $G_{n, \QQb}=G_{n, \QQ}$.
 In fact, this may already fail for $n=1$ when $G_{1,
\QQb}\subsetneq S_d$. Consider a normalized Belyi map $f$ of type
  $(d; e_1, e_2, e_3)$ with all $e_i$ odd. Recall from Lemma
  \ref{lem:gensys}(1) that then $G_{1, \QQb}\simeq A_d$. It follows
  that
\[
A_d\simeq G_{1,\QQb} \subseteq G_{1, \QQ}\subseteq \Aut(T_1)\simeq S_d.
\]
Using the notation from Section \ref{sec:Belyired}, we have that
$G_{1, \QQ}\simeq A_d$ if and only if the discriminant $\Delta(f-t)$ of
$x^{e_1}f_1-tf_2$, considered as polynomial in $x$ over $\mathbb{Q}(t)$, is a square in
$\QQ(t)$, and $G_{1, \QQ}\simeq S_d$ otherwise. For simplicity, we
restrict to the case that $f$ is a polynomial, i.e., that $e_3=d$. In
this case, a formula for the discriminant of $f$ is given in
\cite[Proposition 3.1]{aitken}:
\[
\Delta(f-t)=(-1)^{(d-1)(d-2)/2}d^d \ell(f)^{d-1}t^{e_1-1}(t-1)^{e_2-1},
\]
where $\ell(f)$ denotes the leading coefficient of
$f$. Proposition~\ref{prop:Belyifam}(1) gives an explicit expression
for $\ell(f)$.  The formula for the discriminant, together with the
expression for $\ell(f)$, implies that if $f$ is a polynomial and $d$
is odd, then $\Delta(f-t)$ is never a  square in $\QQ(t)$. We note that a
formula for $\Delta(f-t)$ in the non-polynomial case may be deduced
from \cite[Proposition~1]{CulFar}. 
\end{remark}

The structure of $\Aut(T_n)$ as an iterated wreath product implies that
 the case $n=2$ plays a key role in determining $G_{\infty,\QQ}$. 
 Recall that $f^n$ denotes the $n$th iterate of $f$. 
   
 \begin{proposition}\label{prop:leveltwo}
Let $f$ be a normalized Belyi map of type $\underline{C} = (d; e_1,
e_2, e_3)$. 
If $G_{2, \QQ}=G_{2, \QQb}$, then
$G_{n, \QQ}=G_{n, \QQb}$ for all $n\geq 2$.
\end{proposition}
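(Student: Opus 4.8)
The plan is to recast the statement in terms of constant fields and then exploit the factorization $f^n=f^{n-2}\circ f^2$ to reduce the (non-)growth of the field of constants at level $n$ to the assumption made at level $2$. Throughout I work inside a fixed algebraic closure of $F_0=\QQ(t)$, into which all the $M_m$ are embedded compatibly.

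First I would record the standard dictionary between the two Galois groups. Writing $K_n:=\QQb\cap M_n$ for the field of constants of $M_n$, one has $G_{n,\QQb}\cong\Gal(M_n/K_n(t))$ and hence $G_{n,\QQ}/G_{n,\QQb}\cong\Gal(K_n/\QQ)$; thus $G_{n,\QQ}=G_{n,\QQb}$ if and only if $K_n=\QQ$, i.e.\ $M_n/\QQ(t)$ is a regular extension. Since $M_1\subseteq M_2$, the hypothesis $K_2=\QQ$ immediately forces $K_1=\QQ$ (and $K_0=\QQ$ is trivial). It therefore suffices to prove, by induction on $n$, that $K_n=\QQ$ for all $n$, the cases $n\in\{0,1,2\}$ being already settled.

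For the inductive step ($n\geq 3$, using $K_{n-2}=\QQ$) I would use the decomposition $f^n=f^{n-2}\circ f^2$. The roots of $f^n(x)-t$ are precisely the roots of $f^2(x)-w$, where $w$ ranges over the roots of $f^{n-2}(x)-t$; all such $w$ already lie in $M_{n-2}$, the splitting field of $f^{n-2}-t$. Consequently
\[
M_n=M_{n-2}\cdot \prod_{w} S_w,\qquad S_w:=\QQ(w)\bigl(\{x : f^2(x)=w\}\bigr),
\]
the compositum being taken inside the fixed algebraic closure of $F_0$. The crucial observation is that each $w$ is transcendental over $\QQ$ with $\QQ(w)$ rational, so that $f^2(X)-w\in\QQ(w)[X]$ is nothing but the generic polynomial $f^2(X)-t$ after the renaming $t\mapsto w$; hence $S_w$ is an abstract copy of $M_2$ over $\QQ(t)$, and by the hypothesis $K_2=\QQ$ it is a regular extension of $\QQ(w)$.

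It then remains to assemble the compositum. By the inductive hypothesis $M_{n-2}$ has field of constants $\QQ$, and for each $w$ the extension $S_w/\QQ(w)$ is regular with $\QQ(w)\subseteq M_{n-2}$. Adjoining the $S_w$ one at a time and invoking the stability of regular extensions under arbitrary base change (if $L/k$ is regular and $k\subseteq F$, then the compositum $LF$ is regular over $F$, so no new constants appear), I would conclude that each step keeps the field of constants equal to $\QQ$; hence $K_n=\QQ$. This closes the induction and proves the proposition. The step I expect to be the main obstacle is exactly this last one: one must check carefully that the $S_w$ are genuinely regular over their base $\QQ(w)$ — which is where the level-$2$ hypothesis enters, through the identification $S_w\cong M_2$ — and that forming the compositum with the already-regular $M_{n-2}$ introduces no new constants. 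The only genuinely delicate point is the bookkeeping of which parameter, $t$ or $w$, plays the role of the free transcendental in each copy.
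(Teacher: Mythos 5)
Your reduction of the statement to constant fields is fine: $G_{n,\QQ}=G_{n,\QQb}$ iff $K_n:=\QQb\cap M_n=\QQ$, the decomposition $M_n=M_{n-2}\cdot\prod_w S_w$ is correct, and each $S_w$ is indeed an abstract copy of $M_2$ over a rational base $\QQ(w)$, so the level-$2$ hypothesis gives $\QQb\cap S_w=\QQ$. The proof fails at exactly the step you flagged as the main obstacle, and it fails for a structural reason, not for bookkeeping. The principle you invoke --- ``if $L/k$ is regular and $k\subseteq F$, then the compositum $LF$ is regular over $F$, so no new constants appear'' --- is false for composita inside a common overfield. Take $k=\QQ$, $L=\QQ(\pi)$ and $F=\QQ(\pi\sqrt{2})$: both are regular over $\QQ$ (constant field $\QQ$), yet $LF=\QQ(\pi,\sqrt{2})$ has constant field $\QQ(\sqrt{2})$. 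Base-change stability holds either when $L/k$ is regular and one forms $\mathrm{Frac}(L\otimes_k F)$ (equivalently, when $L$ and $F$ are linearly disjoint over $k$), or when $L/k$ is regular and $L,F$ are algebraically free over $k$. Neither hypothesis is available to you: $S_w/\QQ(w)$ is a finite extension, hence never regular over $\QQ(w)$ (saying it is regular over $\QQ(w)$ conflates ``regular over the base'' with ``constant field $\QQ$''), and $S_w$ is not free from the previously built field over $\QQ$, since both contain the transcendental $w$. So no version of the base-change theorem closes the induction.

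Moreover, this gap cannot be repaired by purely formal field theory, because the fields $S_w$ are genuinely correlated over $\QQb M_{n-2}$: by Proposition \ref{prop:betas} and Lemma \ref{lem:conjH}, the geometric group $N_{n,\QQb}=\Gal(\QQb M_n/\QQb M_{n-1})$ is a \emph{proper} subgroup of the product of the full fiberwise groups (of index $2^{d^{n-2}}$ in $(S_d)^{d^{n-1}}$), the components being tied together through signs. Whether adjoining the $S_w$ creates new constants is exactly the question of whether the arithmetic correlation matches the geometric one, and the implication ``$K_2=\QQ\Rightarrow K_n=\QQ$'' is not a general truth about rational maps: for a general $f$, a quadratic constant (e.g.\ coming from $\Delta(f^n-t)$ being a nonsquare rational times a square in $\QQ(t)$) could appear first at level $n>2$. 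This is why the paper's proof does not argue field-theoretically but inducts group-theoretically: using Theorem \ref{thm:G2alt} it knows $G_{n,\QQb}$ equals $E_n$ (or the iterated wreath product of $A_d$), so that $G_{n,\QQb}\subseteq G_{n,\QQ}\subseteq G_{n-1,\QQ}\wr G_{1,\QQ}=E_{n-1}\wr E_1$ with $[E_{n-1}\wr E_1:E_n]=2$, and the single remaining bit is cut out by $\sgn_2\circ\pi_2$, hence is detected at level $2$, where the hypothesis $G_{2,\QQ}=G_{2,\QQb}$ applies. Some such maximality input on the geometric group is indispensable; your argument never uses it, which is precisely why the compositum step cannot go through as written.
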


 \begin{proof}
 We first assume that $G_{1, \QQb}=S_d$.
We prove the statement by induction on $n \geq 2$. Assume that the statement holds for $n-1$, i.e., we have $G_{n-1, \QQ}=G_{n-1,\QQb}$.
Theorem \ref{thm:G2alt}(1) implies that
\[
E_n=G_{n, \QQb}\subseteq G_{n, \QQ}\subseteq \Aut(T_n).
\]
The induction hypothesis implies that
\[
G_{n,\QQb}\subseteq G_{n, \QQ}\subseteq G_{n-1, \QQ}\wr G_{1, \QQ}=G_{n-1,
  \QQb}\wr G_{1, \QQb}=E_{n-1}\wr E_1.
\]
The second inclusion follows from the structure of $G_{n, \QQ}$ as a 
wreath product induced by the decomposition $f^n=f\circ f^{n-1}$.
Lemma \ref{lem:En}(1) implies that $[E_{n-1}\wr E_1:E_n]=2$. We
conclude that $G_{n, \QQ}$ equals either $G_{n, \QQb}=E_n$ or
$E_{n-1}\wr E_1$. The definition of the wreath-product sign
  (Definition \ref{def:sgn2}) implies that we can  distinguish
  the two possible groups by considering their images under $\pi_2$. Since
  $\pi_2(G_{n, \QQ})=G_{2, \QQ}$ and $\pi_2(G_{n, \QQb})=G_{2, \QQb}$, the result follows in this case.
  
  Now assume that $G_{1, \QQb}=A_d$.  Theorem \ref{thm:G2alt}(2) implies that $G_{n, \QQb}$ is the $n$-fold iterated wreath product of $A_d$ with itself. We denote this group by $E_n^+$. As in the previous case, we conclude from the induction hypothesis that
  \[
  G_{n,\QQb}\subseteq G_{n, \QQ}\subseteq G_{n-1, \QQ}\wr G_{1, \QQ}=G_{n-1,
    \QQb}\wr G_{1, \QQb}=E_{n-1}^+\wr E_1^+.
  \]
   Note that the wreath product sign restricted to $E_n^+$ is trivial. The statement follows as in the first case.
  \end{proof}

We write $f(x)=g(x)/h(x)$, where $g,h \in \ZZ[x]$ are relatively prime as polynomials in $\ZZ[x]$.  
We define the discriminant of the rational map $f(x)-t$ as the discriminant of the polynomial $g(x)-th(x)$, viewed as a polynomial in $x$ over $\QQ(t)$. In other words, $\Delta(f-t)$ is in $\QQ[t]$ and
 \[ \Delta(f-t):=\Delta(g(x)-th(x)). \] 
 
 Let $L$ be the splitting field of $g(x)-th(x)$ over $\Q(t)$ and let $t_i$ for $i=1,\ldots,d$ be the roots of $g(x)-th(x)$ in $L$.

\begin{lemma}\label{lem:sgncharacter}
Let $f(x)$ be a rational map in $\QQ(x)$. The Galois group $G_{2,\QQ}$ (attached to $f$) is a subgroup of $\ker(\sgn_2)$ if and only if 
the product $\Delta(f(x)-t)\prod_i{\Delta(f(x)-t_i)}$ is a square in $\QQ(t)$. 
\end{lemma}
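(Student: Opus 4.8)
The plan is to recognize the wreath-product sign $\sgn_2$ restricted to $G_{2,\QQ}$ as a product of two quadratic characters and to match each of these with a square root of a discriminant that lives in the splitting field $M_2$, so that the whole statement reduces to the classical fact that a nonzero element of $\QQ(t)$ is a square exactly when its square root in $M_2$ is fixed by $G_{2,\QQ}$. Concretely, for $\sigma\in G_{2,\QQ}\subseteq \Aut(T_2)=\Aut(T_1)\wr\Aut(T_1)$ written as $\sigma=((\sigma_1,\dots,\sigma_d),\tau)$ in the notation of \eqref{eq:wrelts}, the definition \eqref{eq:wrsgn} of the wreath-product sign factors as $\sgn_2(\sigma)=\chi_1(\sigma)\chi_2(\sigma)$ with $\chi_1(\sigma)=\sgn(\tau)$ and $\chi_2(\sigma)=\prod_{i=1}^d\sgn(\sigma_i)$. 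Here $\chi_1=\sgn\circ\pi_1$ is manifestly a homomorphism, and $\chi_2=\sgn_2\cdot\chi_1$ (values in $\{\pm1\}$) is therefore one as well.

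First I would treat $\chi_1$. Under the identification of the leaves of $T_1$ with the roots $t_1,\dots,t_d$ of $g(x)-th(x)$, the permutation $\tau$ records the action of $\sigma$ on these roots. The standard Vandermonde computation then shows that the fixed square root $\delta:=b^{\,d-1}\prod_{i<k}(t_i-t_k)$ of $\Delta(f-t)$, where $b\in\QQ(t)$ is the leading coefficient of $g-th$ in $x$, satisfies $\sigma(\delta)=\sgn(\tau)\,\delta=\chi_1(\sigma)\,\delta$.

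The heart of the argument is the analogue for $\chi_2$. Identifying the $d$ leaves above the level-one vertex $t_i$ with the roots $x_{i,1},\dots,x_{i,d}$ of $g(x)-t_i h(x)$, the action \eqref{eq:wraction} sends the leaf labelled $(i,j)$ to $((i)\tau,(j)\sigma_i)$, i.e.\ $\sigma(x_{i,j})=x_{(i)\tau,(j)\sigma_i}$. Setting $V_i:=\prod_{j<k}(x_{i,j}-x_{i,k})$, a direct computation gives $\sigma(V_i)=\sgn(\sigma_i)\,V_{(i)\tau}$; since $\tau$ merely permutes the factors, this yields $\sigma\bigl(\prod_i V_i\bigr)=\bigl(\prod_i\sgn(\sigma_i)\bigr)\prod_i V_i=\chi_2(\sigma)\prod_i V_i$. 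After multiplying by the $\QQ(t)$-rational scalar $A:=\bigl(\prod_i a_i\bigr)^{d-1}$, where $a_i$ is the leading coefficient of $g-t_ih$ in $x$ (the product $\prod_i a_i$ being symmetric in the $t_i$, hence in $\QQ(t)$), the element $W:=A\prod_i V_i\in M_2$ satisfies $W^2=\prod_i\Delta(f-t_i)$ and transforms by $\sigma(W)=\chi_2(\sigma)\,W$.

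Finally I would combine the two: the element $\delta W\in M_2$ satisfies $\sigma(\delta W)=\sgn_2(\sigma)\,\delta W$ and $(\delta W)^2=\Delta(f-t)\prod_i\Delta(f-t_i)$, which is a nonzero element of $\QQ(t)$ because $f$ is separable in characteristic $0$ and $\prod_i\Delta(f-t_i)$ is symmetric in the $t_i$. By Galois descent, $G_{2,\QQ}\subseteq\ker(\sgn_2)$ (that is, $\sgn_2\equiv1$ on $G_{2,\QQ}$) holds precisely when $\delta W$ is fixed by $G_{2,\QQ}$, i.e.\ lies in $\QQ(t)$, which is in turn equivalent to $\Delta(f-t)\prod_i\Delta(f-t_i)$ being a square in $\QQ(t)$. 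The main obstacle is the computation for $\chi_2$: one must carefully track how the base permutation $\tau$ (which permutes the $d$ subtrees, hence the factors $V_i$) interacts with the top permutations $\sigma_i$ (which reorder the roots inside each subtree), and verify the leading-coefficient bookkeeping needed to establish both $A\in\QQ(t)$ and the exact identity $W^2=\prod_i\Delta(f-t_i)$.
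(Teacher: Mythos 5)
Your proposal is correct and takes essentially the same route as the paper's own proof: both identify the roots $t_{ij}$ (your $x_{i,j}$) with the vertices of $T_2$, compute via \eqref{eq:wraction} that $\sigma$ sends each product of root differences $\prod_{j<k}(t_{ij}-t_{ik})$ to $\sgn(\sigma_i)$ times the corresponding product for the $\tau$-permuted index, deduce $\sigma(\sqrt{D})=\sgn_2(\sigma)\sqrt{D}$ for $D=\Delta(f-t)\prod_i\Delta(f-t_i)$, and conclude by Galois descent. The only difference is cosmetic bookkeeping: you track the leading-coefficient factors ($\delta$, $A$, $W$) and the symmetry argument putting $A$ in $\QQ(t)$ explicitly, details the paper's proof suppresses.
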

\begin{proof}
Let $\sigma=((\sigma_1,\ldots,\sigma_k), \tau)$ be in $G_{2,\QQ} \leq \Aut(T_2)$ and let $D=\Delta(f(x)-t)\prod_i{\Delta(f(x)-t_i)}$.
Then we will show that $\sigma(\sqrt{D})=\sgn_2(\sigma)\sqrt{D}$. 

Let $t_{ij} $, for $1\leq j\leq d$, denote the roots of $f(x)-t_i$ for
all $i$. We can identify $t_{ij}$ with the vertex $(i,j)$ of the tree
$T_2$. By \eqref{eq:wraction} the action of $\sigma$ on $t_{ij}$ is
given by
   \[
   \sigma(t_{ij})=t_{\tau(i)\sigma_i(j)}. 
    \]
 We note here that, for $i=1,\ldots,d$, $\sqrt{\Delta(f-t_i)}$ lives in some quadratic extension of L. Hence 
 \[
   \sigma(\sqrt{\Delta(f-t_i)})=\sigma(\prod_{j<k}(t_{ij}-t_{ik}))=\prod_{j<k}(t_{\tau(i)\sigma_i(j)}-t_{\tau(i)\sigma_i(k)})=\sgn(\sigma_i)\sqrt{\Delta(f-t_{\tau(i)})}.
 \]   
 Similarly, $\sigma(\sqrt{\Delta(f-t)})=\sgn(\tau)\sqrt{\Delta(f-t)}$ and hence
  \[
     \sigma(\sqrt{D})=\sgn_2(\sigma)\sqrt{D}.
  \]
    This concludes the proof.
\end{proof}

\begin{corollary}\label{cor:condsgn}
Let $f$ be a normalized Belyi map of type $\underline{C} = (d; e_1,
e_2, e_3)$. Assume that $G_{1,\QQb}=S_d$. Then $G_{n,\QQ}=G_{n,\QQb}$ for all $n \geq 1$ if and only if the product $\Delta(f(x)-t)\prod_i{\Delta(f(x)-t_i)}$ is a square in $\QQ(t)$.
 \end{corollary}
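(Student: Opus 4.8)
The plan is to collapse the quantifier ``for all $n\geq 1$'' down to the single case $n=2$, and then to identify the $n=2$ equality with the square condition already handled by Lemma~\ref{lem:sgncharacter}. I would begin by disposing of $n=1$ for free: since $G_{1,\QQb}\simeq S_d$ is assumed and $\Aut(T_1)\simeq S_d$, the chain $G_{1,\QQb}\subseteq G_{1,\QQ}\subseteq\Aut(T_1)$ forces $G_{1,\QQ}=G_{1,\QQb}$ unconditionally. Hence the statement ``$G_{n,\QQ}=G_{n,\QQb}$ for all $n\geq 1$'' is equivalent to the same statement for all $n\geq 2$.

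Next I would invoke Proposition~\ref{prop:leveltwo}, which asserts that the single equality $G_{2,\QQ}=G_{2,\QQb}$ already implies $G_{n,\QQ}=G_{n,\QQb}$ for all $n\geq 2$. Together with the trivial reverse implication (specialize the universal statement to $n=2$), this reduces the whole corollary to proving that $G_{2,\QQ}=G_{2,\QQb}$ holds if and only if $\Delta(f(x)-t)\prod_i{\Delta(f(x)-t_i)}$ is a square in $\QQ(t)$.

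The crux is then to pin down $G_{2,\QQb}$ explicitly. By Theorem~\ref{thm:G2alt}(1) (applicable since $G_{1,\QQb}\simeq S_d$) we have $G_{2,\QQb}\simeq E_2$, and unwinding Definition~\ref{def:En} with $E_1=\Aut(T_1)$ gives $E_2=(\Aut(T_1)\wr\Aut(T_1))\cap\ker(\sgn_2)=\Aut(T_2)\cap\ker(\sgn_2)$; that is, $G_{2,\QQb}$ is precisely the kernel of the wreath-product sign $\sgn_2$ on $\Aut(T_2)$, an index-$2$ subgroup. Since $G_{2,\QQb}\subseteq G_{2,\QQ}\subseteq\Aut(T_2)$, the intermediate group $G_{2,\QQ}$ can only be $\ker(\sgn_2)=G_{2,\QQb}$ or all of $\Aut(T_2)$. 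Consequently $G_{2,\QQ}=G_{2,\QQb}$ is equivalent to the inclusion $G_{2,\QQ}\subseteq\ker(\sgn_2)$. Finally, Lemma~\ref{lem:sgncharacter} equates $G_{2,\QQ}\subseteq\ker(\sgn_2)$ with the product $\Delta(f(x)-t)\prod_i{\Delta(f(x)-t_i)}$ being a square in $\QQ(t)$, and chaining the equivalences yields the claim.

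The argument is largely bookkeeping once the two heavy inputs, Proposition~\ref{prop:leveltwo} and Lemma~\ref{lem:sgncharacter}, are in hand. The only genuinely delicate point, and the one I would emphasize, is the identification $G_{2,\QQb}=E_2=\ker(\sgn_2)$: it is exactly the fact that $G_{2,\QQb}$ has index precisely $2$ in $\Aut(T_2)$ that produces the two-way dichotomy for $G_{2,\QQ}$ and thereby makes the sign condition (hence the square condition) decisive. Everything else follows by assembling previously established results.
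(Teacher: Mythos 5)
Your proof is correct and takes essentially the same route the paper intends for this corollary: dispose of $n=1$ trivially, reduce all $n\geq 2$ to $n=2$ via Proposition~\ref{prop:leveltwo}, identify $G_{2,\QQb}=E_2=\Aut(T_2)\cap\ker(\sgn_2)$ (index $2$, by Theorem~\ref{thm:G2alt}(1) and Lemma~\ref{lem:En}(1)), and convert the resulting condition $G_{2,\QQ}\subseteq\ker(\sgn_2)$ into the square condition via Lemma~\ref{lem:sgncharacter}. Your implicit use of Theorem~\ref{thm:G2alt} is legitimate because the hypothesis $G_{1,\QQb}=S_d$ automatically excludes the exceptional types $(4;3,3,3)$ and $(6;4,4,5)$, whose geometric monodromy groups are $A_4$ and a transitive $S_5$, respectively.
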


Next, we will compute the discriminant product in Lemma~\ref{lem:sgncharacter} for a dynamical Belyi map of type $\underline{C}=(d;e_1,e_2,e_3)$. Let us write
$\Delta(f(x)-t)=a(f)t^{e_1-1}(1-t)^{e_2-1}$ with $a(f) \in \Q$. 

\begin{proposition}\label{prop:discriminantsgn}
Let $f(x) = g(x)/h(x)$ be a normalized Belyi map of type $\underline{C} = (d; e_1,
e_2, e_3)$. Then 
   \[
       \Delta(f(x)-t) \prod_i{\Delta(f(x)-t_i)} = u(1-t)^{2(e_2-1)}t^{2(e_1-1)},
   \]
 where $u=(-1)^{(d+1)(e_1-1)}a(f)^{d+1}h(0)^{e_1-1}g(1)^{e_2-1}/\ell(g)^{e_1+e_2-2}$.
\end{proposition}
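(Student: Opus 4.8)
The plan is to treat the identity $\Delta(f(x)-t)=a(f)t^{e_1-1}(1-t)^{e_2-1}$ as an identity of polynomials in the single variable $t$ and to specialize it at each root $t_i$ of $g(x)-th(x)$. The crucial point enabling this is that $\Delta(g(x)-sh(x))$, viewed as a function of $s$, is genuinely a polynomial in $s$ with coefficients in $\QQ$: since $f(\infty)=\infty$ with ramification index $e_3\geq 2$, we have $\deg g=d>\deg h=d-e_3$, so the leading coefficient of $g(x)-sh(x)$ in $x$ equals the constant $\ell(g)$ for every $s$. The discriminant is a polynomial in the coefficients divided by this (constant, nonzero) leading coefficient, hence lies in $\QQ[s]$, and a polynomial identity in $\QQ[s]$ remains valid after substituting $s=t_i\in L$. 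Therefore $\Delta(f(x)-t_i)=a(f)t_i^{e_1-1}(1-t_i)^{e_2-1}$ for each $i$.

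Taking the product over the $d$ roots $t_1,\dots,t_d$ then gives
\[
\prod_{i=1}^d \Delta(f(x)-t_i)=a(f)^d\Big(\prod_{i=1}^d t_i\Big)^{e_1-1}\Big(\prod_{i=1}^d(1-t_i)\Big)^{e_2-1},
\]
so it remains to evaluate the two symmetric functions of the $t_i$ using Vieta's formulas applied to $g(x)-th(x)=\ell(g)\prod_i(x-t_i)$. For the first, $\prod_i t_i=(-1)^d(g(0)-th(0))/\ell(g)$; since $f(0)=0$ and $h(0)\neq 0$ force $g(0)=0$, this equals $(-1)^{d+1}th(0)/\ell(g)$. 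For the second, $\prod_i(1-t_i)=(g(1)-th(1))/\ell(g)$; since $f(1)=1$ and $h(1)\neq 0$ force $g(1)=h(1)$, this equals $g(1)(1-t)/\ell(g)$.

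Substituting these expressions and raising to the appropriate powers yields
\[
\prod_{i=1}^d\Delta(f(x)-t_i)=a(f)^d(-1)^{(d+1)(e_1-1)}\frac{h(0)^{e_1-1}g(1)^{e_2-1}}{\ell(g)^{e_1+e_2-2}}\,t^{e_1-1}(1-t)^{e_2-1}.
\]
Multiplying this by $\Delta(f(x)-t)=a(f)t^{e_1-1}(1-t)^{e_2-1}$ and collecting the powers of $t$ and of $1-t$ produces exactly $u\,(1-t)^{2(e_2-1)}t^{2(e_1-1)}$ with the asserted constant $u$. The only delicate step is the specialization in the first paragraph: one must confirm that the normalization indeed guarantees $\deg h<\deg g$, so that the leading coefficient is independent of $s$ and the discriminant is polynomial in $s$, which is what legitimizes plugging the algebraic numbers $t_i$ into the discriminant identity. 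The remaining computations are routine applications of Vieta's formulas together with the normalization conditions $g(0)=0$ and $g(1)=h(1)$.
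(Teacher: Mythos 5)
Your proposal is correct and follows essentially the same route as the paper's proof: specialize the known identity $\Delta(f(x)-t)=a(f)t^{e_1-1}(1-t)^{e_2-1}$ at each root $t_i$, take the product, and evaluate $\prod_i t_i$ and $\prod_i(1-t_i)$ via $g(x)-th(x)=\ell(g)\prod_i(x-t_i)$ at $x=0$ and $x=1$, using $g(0)=0$ and $g(1)=h(1)$. Your first paragraph in fact adds a justification the paper leaves implicit, namely that $\deg h = d-e_3 < d = \deg g$ makes the leading coefficient of $g-sh$ constant, so $\Delta(g-sh)\in\QQ[s]$ and substituting the algebraic elements $t_i$ is legitimate.
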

\begin{proof}
For $f(x)=g(x)/h(x)$ as above we have $\Delta(g(x)-t_ih(x))=a(f) t_i^{e_1-1}(1-t_i)^{e_2-1}$ and hence
 \[ 
  \prod_i \Delta(g(x)-t_ih(x))=a(f)^d\prod_{i=1}^d(t_i)^{e_1-1}\prod_{i=1}^d(1-t_i)^{e_2-1}. 
 \]
 Using the fact that $g(x)-th(x)=\ell(g)\prod_{i=1}^d(x-t_i)$, substituting $x=0$ and $x=1$, we compute that 
\[
 \prod_{i=1}^d {t_i}=\frac{(-1)^{d+1}h(0)}{\ell(g)}t \quad \text{and} \quad  \prod_{i=1}^d {(1-t_i)}=\frac{g(1)}{\ell(g)}(1-t). 
 \]
Therefore we find that
\begin{align*}
  \Delta(f(x)-t) \prod_i{\Delta(f(x)-t_i)}=& a(f)^{d+1}t^{e_1-1}(1-t)^{e_2-1} \prod_{i=1}^d(t_i)^{e_1-1}\prod_{i=1}^d(1-t_i)^{e_2-1} \\
  =& a(f)^{d+1}t^{e_1-1}(1-t)^{e_2-1}(\frac{(-1)^{d+1}h(0)}{\ell(g)}t)^{e_1-1} (\frac{g(1)}{\ell(g)}(1-t))^{e_2-1},
\end{align*}
as claimed.
\end{proof}

\begin{corollary} \label{cor:descent} \hfill
\begin{enumerate}
   \item Let $d$ be odd.
\begin{enumerate}
\item Let $f$ be a normalized Belyi map of type $(d; e_1, e_2,
  e_3=d)$ such that  $e_1$ or $e_2$ is even. Then $f$ is a polynomial such that $G_{1,\QQb}=S_d$ and  $G_{n,\QQ}=G_{n,\QQb}$ for all $n \geq 1$.
  \item Let $f$ be a normalized Belyi map of type $(d; d-k, 2k+1, d-k)$ with $k$ odd.  Then $G_{1,\QQb}=S_d$ and $G_{n,\QQ}=G_{n,\QQb}$ for all $n \geq 1$.
\end{enumerate}
\item Let $f$ be a normalized Belyi map such that $G_{1,\QQb}=G_{1,\QQ}=A_d$. Then $G_{n,\QQ}=G_{n,\QQb}$ for all $n \geq 1$.
\end{enumerate}
\end{corollary}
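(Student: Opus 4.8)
The plan is to dispatch parts (1)(a) and (1)(b) through Corollary~\ref{cor:condsgn} and Proposition~\ref{prop:discriminantsgn}, reducing everything to a parity/square-class computation, and to dispatch part~(2) by a direct wreath-product sandwich followed by Proposition~\ref{prop:leveltwo}. For both cases of part~(1) I would first check that $G_{1,\QQb}\simeq S_d$. Since $d$ is odd and $e_1+e_2+e_3=2d+1$, the triple $(e_1,e_2,e_3)$ is either all odd or has exactly one odd entry. In case (a) we have $e_3=d$ odd and $e_1$ or $e_2$ even, and in case (b) we have $e_2=2k+1$ odd and $e_1=e_3=d-k$ even (because $d$ and $k$ are both odd); so in either case some $e_i$ is even and Lemma~\ref{lem:gensys}(1) gives $G_{1,\QQb}\simeq S_d$ (neither type is $(6;4,4,5)$). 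Moreover $e_3=d$ forces total ramification over $\infty$, so $f$ is a polynomial in case (a).

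By Corollary~\ref{cor:condsgn} it remains to show that $\Delta(f-t)\prod_i\Delta(f-t_i)$ is a square in $\QQ(t)$. Proposition~\ref{prop:discriminantsgn} writes this product as $u\,(1-t)^{2(e_2-1)}t^{2(e_1-1)}$, where $(1-t)^{2(e_2-1)}t^{2(e_1-1)}=\bigl((1-t)^{e_2-1}t^{e_1-1}\bigr)^2$ is already a square; hence the product is a square exactly when $u\in\QQ^\ast$ is a square. Since the left-hand side is intrinsic to $f$, the square class of $u$ is independent of the coprime representation $f=g/h$ (a common rescaling of $g,h$ alters $u$ by an even power). As $d$ is odd, $d+1$ is even, so $(-1)^{(d+1)(e_1-1)}=1$ and $a(f)^{d+1}$ is a square; thus modulo squares $u\equiv h(0)^{e_1-1}g(1)^{e_2-1}\ell(g)^{e_1+e_2-2}$.

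I would finish (a) by choosing the representation $h=1$, $g=f$: then $h(0)=1$ and $g(1)=f(1)=1$ by normalization, while $e_1+e_2-2=d-1$ is even (as $e_3=d$ gives $e_1+e_2=d+1$), so $u\equiv\ell(f)^{d-1}$ is a square. For (b) I would read off from Proposition~\ref{prop:Belyifam}(2) that $g(x)=x^{d-k}(a_0x^k-\cdots)$ has leading coefficient $\ell(g)=a_0$ and that $h(x)=\cdots+a_0$ has constant term $h(0)=a_0$, with $a_0\neq0$ since $d\geq e_2=2k+1$. Then $u\equiv a_0^{(e_1-1)+(e_1+e_2-2)}g(1)^{e_2-1}=a_0^{2e_1+e_2-3}g(1)^{e_2-1}$, and both exponents $2e_1+e_2-3=2d-2$ and $e_2-1=2k$ are even, so $u$ is a square. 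In both cases Corollary~\ref{cor:condsgn} gives $G_{n,\QQ}=G_{n,\QQb}$ for all $n\geq1$. For part~(2), all $e_i$ are odd, so $G_{1,\QQb}\simeq A_d$; provided $\underline{C}\neq(4;3,3,3)$, Theorem~\ref{thm:G2alt}(2) gives $G_{2,\QQb}=A_d\wr A_d$. The decomposition $f^2=f\circ f$ yields $G_{2,\QQ}\subseteq G_{1,\QQ}\wr G_{1,\QQ}=A_d\wr A_d$ using the hypothesis $G_{1,\QQ}=A_d$, and together with $G_{2,\QQb}\subseteq G_{2,\QQ}$ this forces $G_{2,\QQ}=G_{2,\QQb}$. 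Proposition~\ref{prop:leveltwo} then propagates this to all $n\geq2$, while $n=1$ is the hypothesis.

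I expect the only genuine work to be the square-class bookkeeping in part~(1): in particular, noticing in case (b) that $h(0)=\ell(g)=a_0$, so that the two odd exponents $e_1-1$ and $e_1+e_2-2$ combine into the even exponent $2d-2$ and kill the nonsquare contribution of $a_0$. The remaining delicate point is the exceptional type $(4;3,3,3)$ in part~(2), where the geometric group is strictly smaller than $A_4\wr A_4$ and the sandwich argument does not close; this type must be verified separately, e.g.\ by a direct computation with the generating system of Lemma~\ref{lem:gensys}(2). The wreath-product sandwich itself, by contrast, is immediate.
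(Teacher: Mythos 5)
Your proof is correct and follows essentially the same route as the paper: part (1) is dispatched via Corollary~\ref{cor:condsgn} and the square-class bookkeeping from Proposition~\ref{prop:discriminantsgn} (with exactly the paper's identifications $h(0)=g(1)=1$ in case (a) and $h(0)=\ell(g)=a_0$ in case (b), which you merely spell out in more detail), and part (2) via the wreath-product sandwich against Theorem~\ref{thm:G2alt}(2) --- your detour through $n=2$ and Proposition~\ref{prop:leveltwo} is an inessential variant of the paper's direct induction $G_{n,\QQ}\subseteq G_{n-1,\QQ}\wr G_{1,\QQ}$. Your caveat about the type $(4;3,3,3)$ in part (2) is well taken: the paper's proof silently invokes Theorem~\ref{thm:G2alt}, which excludes that type even though the corollary's statement does not, so your observation that the sandwich fails there (since $G_{n,\QQb}$ is then strictly smaller than the iterated wreath product of $A_4$) and that this case requires separate verification identifies a gap the paper glosses over.
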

\begin{proof}
\begin{enumerate}
\item  
Proposition~\ref{prop:discriminantsgn}, together with  the explicit expression for $f$ in Proposition~\ref{prop:Belyifam} and the assumption that $d$ is odd, implies in both cases that 
$\Delta(f(x)-t)\prod_i{\Delta(f(x)-t_i)}$ is a square. 
(In the situation of Statement (a) one has that $h(0)=g(a)=1$. In the situation of Statement (b) one has that $h(0)=\ell(g)=a_0$.  

The statement follows therefore from Corollary~\ref{cor:condsgn}.
\item We have that $G_{n,\QQ}$ is a subgroup of the $n$-fold iterated wreath product of $A_d$ with itself, since it can be identified with a subgroup of $G_{n-1,\QQ} \wr G_1$. Hence the result follows from Theorem~\ref{thm:G2alt} and the fact that $G_{n,\QQb} \subset G_{n,\QQ}$.
\end{enumerate}
\end{proof}

\begin{remark}\label{rem:descent} \hfill
\begin{enumerate} 
\item One may treat the case that $d$ is even using the formula in Proposition~\ref{prop:discriminantsgn}, though the statements are not quite as nice in this case. For instance, let $f(x)=-(d-1)x^d+x^{d-1}$ for $d \geq 4$. Note that $f$ has combinatorial type $(d; d-1, 2 ,d)$. Assume that $d$ is even and $G_{1,\QQb}=S_d$. Then we find that
$G_{2,\QQ}=G_{2,\QQb}$ if and only if $d$ is divisible by $4$.

\item
Assume that $G_{1,\overline{\mathbb{Q}}}=A_d$ and that $G_{1,\QQ}=S_d$. By Proposition~\ref{prop:discriminantsgn} and Lemma~\ref{lem:sgncharacter}, the group $G_{2,\QQ}$ is contained in $\ker(\sgn_2)$ if and only if $a(f)^{d+1}$ is a square in $\QQ$. Since $G_{1,\QQ}=S_d$, we know that $a(f)$ is not a square. It follows that $G_{2,\QQ}$ is contained in $\ker(\sgn_2)$ if and only if $d$ is odd.
\end{enumerate}

\end{remark}

\section{Specialization}\label{sec:spec}

  In this section we prove some explicit results on the specialization
  of normalized Belyi maps $f$.  For any $n \geq 1$, Hilbert's
  Irreducibility Theorem implies that there exists a non-empty
  Zariski-open set $\mathcal{H}_n=\mathcal{H}_n(f) \subseteq
  \PP^1(\QQ)$, called a \emph{Hilbert set}, such that specializing the
  parameter $t$ to $a\in \mathcal{H}_n$ does not change the Galois
  group. In this section we determine explicit elements $a\in
  \mathcal{H}_n(f)$ for all $n$. This means that we get an explicit
  tower of number fields $(K_{n,a})_{n \geq 1}$ with prescribed Galois groups (Definition
  \ref{def:Kn}) by specializing to these values of $a$.

  These elements are determined by 
  Conditions \ref{conditions}. Conditions \ref{conditions} can be
  thought of as the analogue of Conditions $(\dagger)$ of
  \cite{BFHJY}, and Proposition \ref{prop:localEisenstein} as the
  analogue of \cite[Proposition 3.4]{BFHJY}.

\subsection{Irreducibility and ramification conditions}

Throughout this section, we fix a normalized Belyi map $f$ of type
$\underline{C}:=(d; e_1, e_2, e_3)$.
Recall that
  we write $f$ both for the rational function $f(x)\in \QQ(x)$ and the
  corresponding map $f: \mathbb{P}^1 \to \mathbb{P}^1 : x \mapsto t =
  f(x)$. As in the beginning of Section
  \ref{sec:Belyired} we write $f(x)= x^{e_1}f_1(x)/f_2(x)$ and
  assume that the polynomials $f_i$ satisfy (1)--(3) introduced there.

Recall that for any $n \geq 1$ we write $F_n/\QQ(t)$ for the extension
of function fields corresponding to the map $f^n:\PP^1_{\QQ}\to
\PP^1_{\QQ}$ and that $G_{n, \QQ}$ denotes the Galois group of the
Galois closure of this extension (Definition \ref{def:Gn}).

\begin{definition}\label{def:Kn}
Let $a\in \PP^1(\QQ)\setminus\{0,1,\infty\}$ such that the numerator
$f(n,a)$ of $f^n-a$ is irreducible for all $n\geq 1$ and
define $K_{n,a}$ as the extension of $K_{0,a}:=\QQ$ obtained by
adjoining a root of $f(n,a)$.  We denote
by $G_{n,a}$ the Galois group of the normal closure of
$K_{n,a}/K_{0,a}$.
\end{definition}

\begin{proposition}\label{prop:Eisenstein} 
Let $f$ be a normalized Belyi map of type $\underline{C}:=(d; e_1,
e_2, e_3)$.  Assume $f$ has good monomial reduction at $p$ for some
prime $p$.  Choose $a\in \PP^1(\QQ)\setminus\{0,1,\infty\}$ with
$\nu_p(a)=1$.  Then 
\[
[K_{n,a} : \QQ] = d^n\qquad \text{ for all } n \geq 1.
\]
  In particular, $G_{n,a}$ is a transitive subgroup of $S_{d^n}$ for
  all $n \geq 1$.
\end{proposition}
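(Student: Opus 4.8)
The plan is to prove that the numerator $f(n,a)$ of $f^n-a$ is an Eisenstein polynomial at $p$. This gives irreducibility over $\QQ$ at once, hence $[K_{n,a}:\QQ]=\deg f(n,a)=d^n$, and the transitivity of $G_{n,a}$ in $S_{d^n}$ follows because the roots of an irreducible polynomial form a single Galois orbit. So everything reduces to pinning down the reduction of $f^n$ and then checking the three Eisenstein conditions.

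First I would upgrade the hypothesis from $f$ to $f^n$. Since good monomial reduction is in particular good reduction (Definition \ref{def:goodred}), $f$ extends to a morphism of $\PP^1_{\ZZ_p}$ whose special fibre is $\overline{f}=x^d$. Composing, $f^n$ extends to a morphism whose special fibre is $\overline{f}^{\,\circ n}=x^{d^n}$; this has degree $d^n=\deg f^n$, so $f^n$ also has good monomial reduction at $p$, i.e.\ $\overline{f^n}=x^{d^n}$. Writing $f^n=P_n/Q_n$ in lowest terms with $P_n,Q_n\in\ZZ_p[x]$ normalized so that not all their coefficients are divisible by $p$, the equality $\overline{P_n}/\overline{Q_n}=x^{d^n}$ together with $\deg\overline{f^n}=\deg f^n$ forces $\overline{Q_n}$ to be a nonzero constant and $\overline{P_n}=\overline{Q_n}\,x^{d^n}$.

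With this in hand the Eisenstein check is short. Because $f^n$ fixes $\infty$ with positive ramification index, its numerator has strictly larger degree than its denominator, so $\deg P_n=d^n>\deg Q_n$ and the numerator of $f^n-a$ is $F_n:=P_n-aQ_n$, of degree $d^n$; moreover $\gcd(P_n,Q_n)=1$ shows $F_n$ is coprime to $Q_n$, so it is indeed the numerator in lowest terms. As $\nu_p(a)=1$ we have $a\in p\ZZ_p$, whence $F_n\in\ZZ_p[x]$ and $\overline{F_n}=\overline{P_n}=\overline{Q_n}\,x^{d^n}$ is a monomial: every coefficient of $F_n$ below the top is divisible by $p$, while the leading coefficient reduces to $\overline{Q_n}\neq 0$ and is thus a $p$-adic unit. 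For the constant term, the single-cycle condition (Remark \ref{rem:ramfn}) makes $0$ a fixed point of $f^n$ with ramification index $e_1^n\geq 2$, so $x^{e_1^n}\mid P_n$ and $P_n(0)=0$; hence $F_n(0)=-a\,Q_n(0)$, and since $\overline{Q_n}$ is a unit constant we get $\nu_p(Q_n(0))=0$ and $\nu_p(F_n(0))=\nu_p(a)=1$. Therefore $F_n$ is Eisenstein at $p$, hence irreducible over $\QQ_p$ and a fortiori over $\QQ$, giving $[K_{n,a}:\QQ]=d^n$ and the transitivity of $G_{n,a}$.

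The main obstacle I anticipate is the second paragraph: making rigorous that good (monomial) reduction is preserved under iteration, so that reduction commutes with composition and no degree drop occurs, and extracting from $\overline{f^n}=x^{d^n}$ the precise statement that $\overline{Q_n}$ is a constant unit (which is exactly what yields $\nu_p(Q_n(0))=0$ and a $p$-adic unit leading coefficient). Once the shape of the reductions of $P_n$ and $Q_n$ is fixed, the Eisenstein verification and the final conclusions are routine.
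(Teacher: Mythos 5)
Your proof is correct and follows essentially the same route as the paper: both arguments show that the numerator of $f^n-a$ is Eisenstein at $p$ by establishing that the iterate $f^n$ still has good monomial reduction, then conclude irreducibility and transitivity. The only difference is in that intermediate step, where the paper verifies the shape of the coefficients of $f^n$ by direct expansion (leading coefficient a power of the unit $a_{d-e_1}$, denominator constant term a power of the unit $b_0$, all other coefficients divisible by $p$), while you invoke the standard fact that good reduction extends $f$ to a morphism of $\PP^1_{\ZZ_p}$ and is preserved under composition, with reduction commuting with iteration --- a correct and slightly more conceptual way to obtain $\overline{f^n}=x^{d^n}$.
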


Recall that an explicit criterion for good monomial
reduction was given in Proposition~\ref{prop:Belyiredcrit}(2).

\begin{proof}
Write $f(x)=x^{e_1}f_1(x)/f_2(x)$ with $f_1(x)=\sum_{i=0}^{d-e_1}
a_ix^i$ and $f_2(x)=\sum_{i=0}^{d-e_3} b_ix^i$ both in $\ZZ[x]$. Since
$f$ satisfies (1)--(3) from Section \ref{sec:Belyired} and we assume
it has good monomial reduction at $p$, we have that
\begin{itemize}
\item $\nu_p(a_i)=\nu_p(b_j)=0\text{ for $i=d-e_1$ and $j=0$}$,
\item $\nu_p(a_i)>0 \text{ for } i\neq d-e_1$,
\item $\nu_p(b_j)>0 \text{ for } j\neq 0$.
\end{itemize}
We conclude that the numerator of
\[
f_a(x):=f(x)-a=\frac{x^{e_1}f_1(x)-a f_2(x)}{f_2(x)}
\]
is an Eisenstein polynomial for $p$, hence irreducible. Here we have also used
that $f(0)=0$ and $\nu_p(a)~=~1$. Moreover, the numerator and the
denominator of $f_a(x)$ are relatively prime.  The statement follows
for $n=1$.

Similarly, for $n>1$ arbitrary we find
\[
f^n(x)=\frac{a_{d-e_1}^{1+d+\cdots+d^n}x^{d^n}+p\cdot(\text{terms of degree } <
  d^n)}{p\cdot (\text{terms of degree } \geq 1) +b_0^{1+d+\cdots +d^n}}.
\]
The same argument as for $n=1$ therefore also applies to the case of arbitrary $n$.
\end{proof}

For $a\in \PP^1(\QQ)\setminus\{0,1,\infty\}$ such that the numerator
of $f^n-a$ is irreducible, i.e., such that
$[K_{n,a}:\QQ]=[F_n:\QQ(t)]=d^n$, there exists an isomorphism between
the field extensions $\left(K_{n,a}\otimes_\QQ \QQ(t)\right)/\QQ(t)$ and
$F_{n}/\QQ(t)$. This isomorphism induces an inclusion of Galois groups
\begin{equation}\label{eq:Galin}
G_{n,a}\hookrightarrow G_{n,\QQ}.
\end{equation}
In the rest of this section we fix these inclusions for all $n$. For
more details we refer to \cite[Sections 1.1.1 and 1.1.2]{volklein}.

Proposition \ref{prop:localEisenstein} below provides partial
information on the ramification of $K_{n,a}/\QQ$ for suitable choices
of $a$. Here we use the reduction of $f$ at a prime $q$ for which $f$
has good separable reduction.  In Proposition
\ref{prop:Belyiredcrit}(1) we showed that this holds if $q>d$.  The
idea of the proof of Proposition~\ref{prop:localEisenstein} is similar
to that of Proposition \ref{prop:Eisenstein}.  What we show is that if
$a\equiv 0\pmod{q}$, then the ramification of $q$ in $K_{n, a}/\QQ$ is
the same as the ramification above $t=0$ in the iteration
$f^n:\PP^1\to \PP^1$, which was described in Remark
\ref{rem:ramfn}. We give similar statements for the ramification
above the other branch points $t=1, \infty$.  From this, we deduce the
existence of concrete elements in $G_{n,a}$; see Lemma \ref{lem:elts}
below for the precise statement.

\begin{proposition}\label{prop:localEisenstein}
Let $f$ be a normalized Belyi map of type $(d; e_1, e_2, e_3)$ and let
$q$ be a prime of good separable reduction for $f$. Let $a\in
\PP^1(\QQ)\setminus\{0,1,\infty\}$ such that $[K_{n,a}:\QQ]=d^n$ for
all $n\geq 1$. Assume additionally that $\nu_q(a)>0$.

Then there is a unique sequence $\{q_n\}_{n \geq 0}$ such that $q_0 = q$ and $q_n$ is a prime ideal of $K_{n,a}$ lying above $q_{n-1}$, satisfying the following properties: the ramification index of $q_n \in K_{n,a}/K_{n-1,a}$ is $e_1$, and all other primes of $K_{n,a}$ lying above $q$ are unramified in $K_{n,a}/K_{n-1,a}$.

The analogous statement with $e_1$ replaced by $e_2$ (resp.~$e_3$)
holds if we require $\nu_{q}(1-a)>0$ (resp.~$\nu_{q}(a)<0$) instead. 
\end{proposition}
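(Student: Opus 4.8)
The plan is to pass to completions at $q$ and to trace, level by level up the tower $\QQ = K_{0,a} \subseteq K_{1,a} \subseteq \cdots \subseteq K_{n,a}$, the single prime whose coordinate reduces to $0$. First I would record that good separable reduction is inherited by iterates: since $\overline{f}$ is again a normalized Belyi map of type $\underline{C}$ (as noted after Definition \ref{def:goodred}), reduction is functorial for composition of good-reduction maps, so $\overline{f^n}=\overline{f}^{\,n}$ is separable of degree $d^n$ and by Proposition \ref{prop:Belyired}(2) fixes $0,1,\infty$. Writing $\theta_m\in K_{m,a}$ for a compatibly chosen root of the numerator of $f^m-a$, so that $f(\theta_m)=\theta_{m-1}$ with $\theta_0=a$, the hypothesis $[K_{m,a}:\QQ]=d^m$ gives $K_{m,a}=\QQ(\theta_m)\supseteq K_{m-1,a}=\QQ(\theta_{m-1})$ with $[K_{m,a}:K_{m-1,a}]=d$. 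Hence it suffices to analyse each step $K_{m,a}/K_{m-1,a}$ separately and to induct on $m$.

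The inductive step studies the factorisation of the numerator $x^{e_1}f_1(x)-\theta_{m-1}f_2(x)$ of $f(x)-\theta_{m-1}$ over the completion $L$ of $K_{m-1,a}$ at $q_{m-1}$, under the inductive hypothesis that $q_{m-1}$ lies over $q$ with $\theta_{m-1}\equiv 0$ and $e(q_{m-1}\mid q)=e_1^{m-1}$. Reducing modulo $q_{m-1}$: since $a_0=f_1(0)$ and $b_0=f_2(0)$ are $q$-units by conditions (1)--(3) and good reduction (Definition \ref{def:Belyired}), the reduction is $x^{e_1}\overline{f}_1(x)$ with $\overline{f}_1(0)\neq 0$. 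Because $\overline{f}$ is a normalized Belyi map whose only ramification over $0$ is at $x=0$, the polynomial $\overline{f}_1$ is separable, so Hensel's lemma lifts each of its $d-e_1$ simple roots to a factor; these account for all primes over $q_{m-1}$ at which $\theta_m\not\equiv 0$, and each is unramified over $q_{m-1}$. This yields the ``all other primes are unramified'' part of the statement.

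It remains to treat the roots reducing to $0$, which is the heart of the argument. Normalising $w$ to be the $\ZZ$-valued valuation of $L$, the inductive hypothesis gives $w(\theta_{m-1})=\nu_q(a)$, while $w(a_0)=w(b_0)=0$. The lower Newton polygon of $x^{e_1}f_1(x)-\theta_{m-1}f_2(x)$ therefore has vertices $(0,\nu_q(a))$ and $(e_1,0)$, with all intermediate coefficients of $w$-value at least $\nu_q(a)$, hence is a single segment of slope $-\nu_q(a)/e_1$. The crucial point is that this slope has denominator exactly $e_1$, which forces the corresponding degree-$e_1$ factor to be irreducible and totally ramified over $L$, producing a \emph{unique} prime $q_m$ with ramification index $e_1$ over $q_{m-1}$ and with $w(\theta_m)=\nu_q(a)$ again, closing the induction. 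I expect this to be the main obstacle: the conclusion requires $\gcd(\nu_q(a),e_1)=1$ --- exactly where one uses $\nu_q(a)=1$ --- since for a slope of smaller denominator the segment can split into several primes whose ramification indices properly divide $e_1$. This matches the geometric picture of Remark \ref{rem:ramfn}: $q_m$ is the arithmetic avatar of the unique totally ramified point $x=0$ above $t=0$ in $f^m$.

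Finally, the two analogous statements follow by the same argument after a Möbius change of coordinate. For $e_2$ one uses $\nu_q(1-a)>0$, so $a\equiv 1$, and expands $f$ about the fixed point $x=1$, where $f(x)-1$ vanishes to order $e_2$; for $e_3$ one uses $\nu_q(a)<0$, so $a\equiv\infty$, and works in the coordinate $1/x$ about $x=\infty$, where $f$ has local degree $e_3$. In both cases $\overline{f}$ fixes the relevant point by Proposition \ref{prop:Belyired}(2), the local expansion has a unit leading coefficient, and the Newton-polygon computation is formally identical, tracing the unique ramified preimage of $1$ (resp.\ $\infty$) up the tower.
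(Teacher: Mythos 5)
Your argument is the same argument as the paper's (reduce the numerator of $f(x)-\theta_{m-1}$ at $q_{m-1}$ to $x^{e_1}\overline{f}_1(x)$ with $\overline{f}_1$ separable and $\overline{f}_1(0)\neq 0$; Hensel on the slope-$0$ part for the unramified primes; the segment of slope $-\nu_q(a)/e_1$ for the ramified prime; the invariance $w(\theta_m)=\nu_q(a)$ to close the induction), but you are more careful than the paper at exactly one point, and that point is a real problem. As you note, irreducibility and total ramification of the degree-$e_1$ factor require $\gcd(\nu_q(a),e_1)=1$. However, this hypothesis is \emph{not} available to you: the proposition assumes only $\nu_q(a)>0$, and the condition ``$\nu_p(a)=1$'' in Conditions \ref{conditions} is imposed at the \emph{other} prime $p$, the one of good monomial reduction, where it feeds the Eisenstein irreducibility argument of Proposition \ref{prop:Eisenstein}; at $q$ nothing beyond positivity is assumed. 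So your write-up, read as a proof of the literal statement, has a gap at this step: when $\gcd(\nu_q(a),e_1)>1$ the slope segment can split, exactly as you warn.

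What you have actually uncovered is an error in the paper itself. The paper's proof computes the same Newton polygon and asserts without comment that there is a unique ramified prime above $q$ of index $e_1$; the coprimality issue is never addressed, and the statement as printed is false. Concretely, take $f(x)=-2x^3+3x^2$ of type $(3;2,2,3)$ and $a=75$. Then $f$ has good monomial reduction at $3$ (Proposition \ref{prop:Belyiredcrit}(2)) and $\nu_3(a)=1$, so $[K_{n,a}:\QQ]=3^n$ for all $n$ by Proposition \ref{prop:Eisenstein}; moreover $q=5>d$ is a prime of good separable reduction and $\nu_5(a)=2>0$, so all hypotheses of the proposition hold. Yet $-2x^3+3x^2-75$ splits completely over $\QQ_5$: the unit root lifts by Hensel from $x\equiv 4\pmod 5$, and substituting $x=5u$ gives $-10u^3+3u^2-3=0$, whose reduction $3(u^2-1)$ has the simple roots $u\equiv\pm 1\pmod 5$, which also lift. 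Hence no prime of $K_{1,a}$ above $5$ ramifies, contradicting the conclusion at $n=1$. Your proof is therefore the correct proof of the corrected statement, with $\gcd(\nu_q(a),e_1)=1$ (e.g.\ $\nu_q(a)=1$) added to the hypotheses, and analogously $\gcd(\nu_q(1-a),e_2)=1$, $\gcd(-\nu_q(a),e_3)=1$ in the other two cases; your only slip is the belief that $\nu_q(a)=1$ was already assumed. One small further point: your ``all other primes are unramified'' step only treats primes above $q_{m-1}$; for the primes of $K_{m-1,a}$ above $q$ other than $q_{m-1}$ you should add that $\theta_{m-1}$ is there a unit whose reduction avoids $\{0,1,\infty\}$ (its reduction is a root of $\overline{f}_1$ at level one, and $\overline{f}$ maps $\{0,1,\infty\}$ to itself), so $\overline{f}(x)=\overline{\theta}_{m-1}$ is separable because $\overline{f}$ is branched exactly over $\{0,1,\infty\}$ — the paper elides this at the same speed.
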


\begin{proof}
We only prove the statement in the case that $\nu_q(a)>0$; the other
two cases are similar. 

The assumption that $f$ has good separable reduction at $q$ implies
that this reduction satisfies
\[
\overline{f}=\frac{x^{e_1} \overline{f}_1}{\overline{f}_2},
\]
where $\overline{f}_1$ and $\overline{f}_2$ are separable polynomials
of degree $d-e_1$ and $d-e_3$, respectively, which are relatively
prime.  The ramification points of $f$ are exactly $0,1,
\infty$. Since the reduction $\overline{f}$ of $f$ at $q$ is assumed
to be separable, $\overline{f}$ is exactly ramified at $0,1,\infty$.
Proposition \ref{prop:Belyired}(2) implies that $\overline{f}$ is
also of type $(d; e_1, e_2, e_3)$. It follows that
$\overline{f}_1(0)\neq 0$ and $\overline{f}_2(0) \neq 0$.  We conclude
that there is a unique prime $q_1$ of $K_{1,a}$ above $q_0:=q$ that is
ramified. Moreover, the ramification index of this prime is $e_1$.

The description of the ramification of the map $\overline{f}$ implies
that the Newton polygon of $x^{e_1}{f}_1-a{f}_2$ has two $q$-adic
slopes: a slope $0$ with multiplicity $d-e_1$ and a slope
$-\nu_q(a)/e_1$ with multiplicity $e_1$. Here we use that $e_1
>d-e_3=\deg(f_2)$ and that the leading and the constant coefficients of
$f_1$ and $f_2$ are both $q$-adic units. 

The choice of $q_1$ implies that there exists a root $a_1\in K_{1,a}$
of $f(x)-a$ that satisfies $\nu_{q_1}(a_1)=\nu_q(a)>0$.  We conclude
that the prime $q_1$ and the rational function $f(x)-a_1\in
K_{1,a}(x)$ satisfy the hypothesis of the proposition, as well. By
induction, we conclude that there exists a unique ramified prime $q_n$
in $K_{n,a}/K_{n-1,a}$. Moreover, since we assume that
$[K_{n,a}:K_{n-1, a}]=d$, the prime $q_n$ lies above the prime
$q_{n-1}$, and its ramification index is $e_1$.

Let $q_1'$ of $K_{1,a}$ be an unramified prime above $q_{0}$. In this
case it follows that $\nu_{q_1'}(a_1)=0$. We conclude that all primes
above $q_1'$ in $K_{2,a}$ are unramified in $K_{2,a}/K_{1,a}$. By
induction, we conclude that if a prime $q_n'$ above $q$ is unramified
in $K_{n,a}/K_{n-1, a}$ then all primes above it in the tower of
number fields are unramified. This concludes the proof.
\end{proof}

\begin{remark} In the proof of Proposition \ref{prop:localEisenstein} we use
 that fact that if $f$ has good separable reduction at a prime~$q$
 then the reduction $\overline{f}$ has the same type as the map $f$.
 This property is very specific to the case of normalized single-cycle
 genus-$0$ Belyi maps and does not hold without these assumptions on the map. We give
 an example to illustrate this. 

 Let $p>3$ be a prime. The map $f(x)=x^2(x-p)$  has type $(3;
 2,2,3)$, but it is not normalized in the sense of Definition
 \ref{def:type}(2). The branch points are also not normalized to
 $0,1,\infty$. The two ramification points with ramification index $2$
 are $0$ and $p$, which specialize to the same point modulo $p$, and this
 point has ramification index $3>2$. In our very special setting this
 cannot happen.
\end{remark}

Propositions \ref{prop:Eisenstein} and \ref{prop:localEisenstein}
suggest the following conditions.

\begin{conditions}\label{conditions}
Let $f$ be a normalized Belyi map of type $(d; e_1, e_2, e_3)$. Choose $a \in \mathbb{P}^1(\mathbb{Q})\setminus \{0,1,\infty\}$ and distinct
primes $p, q_1, q_2, q_3$ such that the following hold: $f$ has good
monomial reduction at $p$ and good separable reduction at $q_1, q_2,
q_3$, and we have
\[
\nu_p(a)=1,\qquad  \nu_{q_1}(a)>0, \qquad \nu_{q_2}(1-a)>0, \qquad \nu_{q_3}(a)<0.
\]
\end{conditions}

\begin{remark}\label{rem:tower}
The results in this paper can be used to construct towers of number
fields that are branched over an explicit finite set of primes.  Fix
a normalized Belyi map $f$ of type $(d; e_1, e_2, e_3)$ and a value $a
\in \mathbb{P}^1(\QQ)\setminus \{0,1,\infty\}$ such that $[K_{n,a}:
  \QQ] = d^n$ for all $n \geq 1$. Construct the tower $\QQ=K_{0,a}
\subseteq K_{1,a} \subseteq K_{2,a} \subseteq \ldots$ of number
fields.  We denote the set of rational primes in $\QQ$ by~$\PP$. Recall from the introduction that there is a finite set
$\mathcal{P} \subseteq \PP$ such that $K_{n,a}/K_{0,a}$ is unramified
outside primes lying above $\mathcal{P}$. (This is \cite[Theorem
  1]{CulFar}, using that normalized Belyi maps are post-critically
finite.)

We sketch what we can say about the finite set $\mathcal{P}$ in our
situation.  (This is a more precise version of \cite[Section
  5]{CulFar}, using the results on the reduction of normalized Belyi
maps from \cite{WINE2}.) A subtle point is that there is a difference
between the reduction of a rational function $f\in \ZZ[x]$ (defined by
reducing the coefficients modulo $p$ as in Definition
\ref{def:Belyired}) and reduction of the cover $f:\PP^1_\QQ\to
\PP^1_\QQ$. A model over $\Spec(\ZZ_p)$ of the cover $f\otimes_\QQ
\QQ_p$ is required to be finite and flat.  Reducing the rational
function $f\in \ZZ[x]$ modulo $p$ by reducing its coefficients may
yield a rational function of strictly smaller degree. This happens if
and only if $f$ has bad reduction to characteristic $p>0$ in the sense
of Definition \ref{def:goodred}.

The results of \cite[Section 4]{WINE2} can be interpreted as saying
that these notions are closely related for normalized Belyi maps in
the single-cycle case. Namely, the rational function $f$ has
good separable reduction at $p$ in the sense of Definition \ref{def:goodred} if
and only if the Galois closure of the map $f:\PP^1_{\QQ_p}\to
\PP^1_{\QQ_p}$ has potentially good reduction, meaning that there
exist an extension $L/\QQ_p$ and a model of $f\otimes_{\QQ_p} L$ over
$\Spec(\mathcal{O}_L)$ whose special fiber is a separable Galois
cover of $\PP^1$ branched over three points.  This is very special to the
case of normalized single-cycle Belyi maps. 

Recall that we have chosen $f\in \ZZ(x)$ so that the conditions
(1)--(3) in Section \ref{sec:Belyired} are satisfied. As in 
\cite[Section 5]{CulFar} we let
$\mathcal{P}_\text{bad}$ be the set of rational primes for which the
fiber of $f$ at $p$ has degree strictly less than $p$ or is
inseparable. (The third case of \cite[Section 5]{CulFar}, in which the ramification points coalesce
modulo $p$, does not occur in our case, by Proposition
\ref{prop:Belyired}(2).)
  Proposition \ref{prop:Belyiredcrit}(1) implies that
\[
\mathcal{P}_\text{bad}\subseteq \{ p\in \PP \mid p\leq d\}.
\]
We can determine this set more precisely for a given type:
$\deg(\overline{f})<\deg(f)$ at $p$ if and only if $p$ divides the
leading coefficient of $f$. If $f$ has good inseparable reduction at
$p$, then $p\vert\deg(f)$. In fact one can show that $f$ has either
bad or good inseparable reduction at the primes $p$ dividing
$\deg(f)$. (This may be deduced from \cite[Proposition~5]{WINE2}.)

It follows from \cite[Theorem 2]{CulFar} that we may take
$\mathcal{P}=\mathcal{P}_\text{bad}\cup \mathcal{P}_a$, where 
\[
\mathcal{P}_a=\{ p\in \PP\mid  v_p(a)\neq 0 \text{ or } v_p(1-a)>0\},
\]
i.e., the set of primes $p$ such that $a$ is congruent modulo $p$ to one of the
branch points $\{0, 1, \infty\}$ of $f$.

As a concrete example, consider the polynomial Belyi map of type $(d;
d-1, 2, d)$. Then $f(x)=-(d-1)x^d+dx^{d-1}$ has
good monomial reduction at all primes dividing $d$ and bad reduction
exactly at the primes dividing $d-1$,
i.e., $\mathcal{P}_\text{bad}=\{p\in \PP \mid p\vert d(d-1)\}$.
Choosing $a=p$ yields a tower of number fields only branched over
$\mathcal{P}_\text{bad}$.  

The Chinese Remainder Theorem implies that we may also choose $a$ such
that the Conditions \ref{conditions} are satisfied.  For example for
$d = 9$ we may choose $a = 60/11$, and we find
$\mathcal{P}_{\text{bad}} = \{ 2, 3 \}$ and $\mathcal{P}_a = \{ 5, 7,
11 \}$.  The infinite tower of number fields $(K_{n, a})_{n\geq 1}$
corresponding to $f$ only ramifies above $\{2,3,5,7,11\}$.  Combining
Corollary \ref{cor:index} from the next section with Corollary~\ref{cor:condsgn}
and \ref{cor:descent} yields that
$G_{n,a,\QQ}=E_n$ for all $n\geq 1$.

Using the explicit expressions in Proposition \ref{prop:Belyifam} it is
easy to find many more results along these lines.
\end{remark}

Lemma \ref{lem:elts} below translates Conditions \ref{conditions}
into a statement on the existence of certain elements 
$h_{j,n,a}\in G_{n,a}$. We start by setting up some notation. Since $G_{n,a}\subseteq
G_{n, \QQ}\subseteq \Aut(T_n)$, we may define subgroups of $G_{n,a}$
analogous to the subgroups $N_{n, \QQb}$ and $N_{n, \QQb}^i$ defined
in Definitions \ref{def:NNi} and \ref{def:Ni}.

\begin{definition}\label{N2a}
Define  $N_{n,a} := \ker(G_{n,a}\to G_{n-1, a})$ and  $N_{n,a}^i :=
N_n^i \cap \mathcal{S}(T_n^i)$.
\end{definition}

Analogous to \eqref{eq:rhodef}, we may write elements of $N_{n,a}$ as
tuples $(\rho^1, \ldots, \rho^{d^{n-1}})$, where $\rho^i\in
N_{n,a}^i\subseteq \mathcal{S}(T_n^i)$.

\begin{lemma}\label{lem:elts}
Let $f$ be a normalized Belyi map satisfying Conditions
\ref{conditions} for a choice of $a, p, q_1, q_2, q_3$. For $n\geq 2$
and $j\in \{1,2,3\}$ there exist elements $h_{j,n}\in G_{n,a}$ such
that the following conditions hold.
\begin{enumerate}
\item The elements $h_{j,n}\in G_{n,a}$ are conjugate to $g_{j,n}$ in $G_{n,a}$.
\item The element $h_{j,1}\in G_{1,a}$ is a single cycle of length $e_j$.
\item For $n\geq m$, we have that $\pi_m(h_{j,n})=h_{j,m}$.
\item We have
\[ 
\alpha_{j,n,a}:=h_{j,n}^{e_j^{n-1}}\in N_{n,a},
\]
where the components $\alpha_{j,n,a}^i$ of $\alpha_{j,n,a}$ in
$\mathcal{S}(T^i_n)$ for $1\leq i\leq d^{n-1}$ are either single cycles
of length $e_i$ or trivial. The permutation $\alpha_{j,n,a}^i$ is non-trivial for
exactly $e_j^{n-1}$ values of $i$.
\end{enumerate}
\end{lemma}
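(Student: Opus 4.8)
The plan is to realize each $h_{j,n}$ as a generator of the (tame) inertia group at the prime $q_j$ in the Galois closure of $K_{n,a}/\QQ$, and to read off all four properties from the ramification description supplied by Proposition \ref{prop:localEisenstein}. Fix $j$ and write $q=q_j$; recall that Conditions \ref{conditions} place $a$ $q$-adically at the branch point $x_j\in\{0,1,\infty\}$ (via $\nu_{q_1}(a)>0$, $\nu_{q_2}(1-a)>0$, $\nu_{q_3}(a)<0$) and that $f$ has good separable reduction at $q$, with $q$ large enough (cf.\ Proposition \ref{prop:Belyiredcrit}(1)) that the ramification is tame. Choosing a compatible system of primes $\mathfrak{Q}_{j,n}$ of the Galois closures over $q$, one lying above the next, I would let $h_{j,n}$ be the image in $G_{n,a}$ of a generator of the tame inertia subgroup $I(\mathfrak{Q}_{j,n}\mid q)$. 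Since tame inertia is cyclic and restriction of inertia to a subextension is surjective onto the inertia below, a compatible choice of generators immediately yields $\pi_m(h_{j,n})=h_{j,m}$, which is property (3).

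For property (2) and for the cycle type in general, Proposition \ref{prop:localEisenstein} tells us that above $q$ there is, at each level, a single totally ramified prime of ramification index $e_j$ in the chain $\{q_n\}$, while every prime off this chain is unramified from that level on. Translating ramification indices into orbit lengths of the tame inertia generator on the $d^n$ roots of $f(n,a)$, that is, on the leaves of $T_n$, I obtain for $h_{j,n}$ exactly the cycle type recorded in Remark \ref{rem:ramfn}: one cycle of length $e_j^n$ coming from the end of the ramified chain, together with $(d-e_j)d^{n-1-i}$ cycles of length $e_j^i$ for each $0\le i\le n-1$ coming from the primes that branch off at level $i$. For $n=1$ this is a single $e_j$-cycle, which is property (2).

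This cycle type is precisely that of the branch cycle $g_{j,n}$ of Proposition \ref{prop:gensys}, and property (1) upgrades this from conjugacy in $S_{d^n}$ to conjugacy in $G_{n,a}$. For this I would invoke the specialization-of-inertia (branch-cycle) principle in the form suited to our tower: since $f$ reduces well and separably at $q$, the inductive Newton-polygon/Eisenstein analysis in the proof of Proposition \ref{prop:localEisenstein} shows that the inertia at $q$ in $K_{n,a}/K_{n-1,a}$ reproduces, step by step, the monodromy of $f$ around $x_j$, so that $h_{j,n}$ maps under $G_{n,a}\hookrightarrow G_{n,\QQ}$ to an element conjugate to $g_{j,n}$. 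The remaining, and in my view hardest, point is to arrange the conjugating element inside $G_{n,a}$ itself (so that in particular $g_{j,n}\in G_{n,a}$); here I would exploit the transitivity of $G_{n,a}$ coming from the monomial prime $p$ (Proposition \ref{prop:Eisenstein}) together with the transitivity of conjugation on the pieces, matching $h_{j,n}$ to the explicit conjugate of $g_{j,n}$ produced in Proposition \ref{prop:gensys}.

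Finally, property (4) follows formally from (1). Writing $h_{j,n}=\gamma^{-1}g_{j,n}\gamma$ with $\gamma\in G_{n,a}$ gives $\alpha_{j,n,a}=h_{j,n}^{e_j^{n-1}}=\gamma^{-1}\alpha_{j,n}\gamma$, so its component structure is the $\gamma$-conjugate of the one computed in Lemma \ref{alphas}(2). Concretely, raising to the power $e_j^{n-1}$ kills every cycle of length $e_j^i$ with $i\le n-1$ and breaks the unique $e_j^n$-cycle into $e_j^{n-1}$ disjoint $e_j$-cycles; since $\pi_{n-1}(\alpha_{j,n,a})=h_{j,n-1}^{e_j^{n-1}}$ and every cycle length of $h_{j,n-1}$ divides $e_j^{n-1}$, this projection is trivial and hence $\alpha_{j,n,a}\in N_{n,a}$ (Definition \ref{N2a}). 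Tracking how the $e_j^n$ leaves of the big cycle sit over the $e_j^{n-1}$ vertices of the level-$(n-1)$ cycle of $h_{j,n-1}$, each such subtree $T_n^i$ receives exactly one of the $e_j$-cycles, so the component $\alpha_{j,n,a}^i$ is a single $e_j$-cycle for exactly $e_j^{n-1}$ indices $i$ and trivial otherwise, as required.
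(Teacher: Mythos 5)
Your proposal is correct and takes essentially the same route as the paper: there, too, $h_{j,n}$ is defined as a generator of the inertia group of $q_j$ in $G_{n,a}$, properties (1)--(3) are read off from Proposition \ref{prop:localEisenstein} together with a compatible choice of generators across levels, and (4) is obtained by running the argument of Lemma \ref{alphas} for the elements $\alpha_{j,n,a}$. Your formal derivation of (4) by conjugating the component computation of Lemma \ref{alphas}(2) is only a cosmetic variant of the paper's observation that $\alpha_{j,n,a}$ has the same cycle type as $\alpha_{j,n}$, and your explicit flagging of the conjugacy-in-$G_{n,a}$ step matches (indeed, is more candid than) the paper's one-line assertion of that same point.
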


\begin{proof}
For $n \geq 1$ and $j\in \{1,2,3\}$ let $h_{j,n}\in G_{n,a}$ be a
generator of the inertia group of $q_j$ in $G_{n,a}$. Then Proposition
\ref{prop:localEisenstein} implies that $h_{j,n}$ is conjugate in
$G_{n,a}$ to $g_{j,n}$. Hence Statement (1) holds.  For $n=1$, the
$h_{j,1}$ are single $e_j$-cycles for $j = 1,2,3$, proving  Statement (2). 
Moreover, it is clear that we may choose the $h_{j,n}$ for varying $n$
consistingly, guaranteeing  that Statement (3) holds.

Arguing as in the proof of Lemma \ref{alphas}, we conclude that the
elements $\alpha_{j,n,a}$ are contained in $N_{n,a}$ for $j\in
\{1,2,3\}$. Statement (1) implies that $\alpha_{j,n,a}$ has the same
cycle type as $\alpha_{j,n}$ (defined in Lemma \ref{alphas}) when
considered as an element of $S_{d^n}$. Statement (4) follows therefore
immediately from Lemma~\ref{alphas}.
\end{proof}

\subsection{Comparing $G_{n,a}$ and $G_{n, \QQ}$}

In this section we compare the Galois groups $G_{n,a}\subseteq G_{n, \QQ}$ and
give sufficient conditions on $a$ for these groups to be equal for all
$n\geq 1.$ The key step is to show that the geometric Galois group
$G_{n,\QQb}$ is a subgroup of $G_{n,a}$ for all $n$ if Conditions
\eqref{conditions} are satisfied. We show this using the explicit
elements of $G_{n,a}$ we produced in Lemma \ref{lem:elts} and by arguing
as in Sections \ref{sec:gensys} and \ref{sec:GnQQb}.

\begin{proposition}\label{prop:condEisenstein}
Let $f = x^{e_1}f_1/f_2$ be a normalized Belyi map of type $(d;e_1,
e_2, e_3)\neq (6;4,4,5)$. Assume $f$ satisfies Conditions \ref{conditions} for a
choice of $a, p, q_1, q_2, q_3$.
\begin{enumerate}
\item We have that $G_{1, \QQb}\subseteq G_{1,a} \subseteq S_d$. In particular,
  $G_{1, a}\simeq S_d$ in the case that $G_{1, \QQb}\simeq S_d$.
\item For $n \geq 2$ and $1\leq i\leq d^{n-1}$, 
  the image of the projection map 
\[
N_{n,a} \to \mathcal{S}(T^i_{n}),\qquad  (\rho^1,
  \ldots, \rho^{d^{n-1}}) \mapsto \rho^i
\]
 contains
  $G_{1, \QQb} \subseteq S_d$ as a subgroup. 
\end{enumerate}
\end{proposition}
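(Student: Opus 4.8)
The plan is to prove the two statements in tandem, arranging (2) so that it reduces, at a single vertex, to exactly the group-theoretic situation of (1). For Statement (1), the upper bound is immediate from $G_{1,a}\subseteq G_{1,\QQ}\subseteq \Aut(T_1)\simeq S_d$. For the lower bound I would combine two facts: by Proposition~\ref{prop:Eisenstein} (good monomial reduction at $p$ together with $\nu_p(a)=1$) the group $G_{1,a}$ is a \emph{transitive} subgroup of $S_d$, and by Lemma~\ref{lem:elts}(2) it contains single cycles $h_{j,1}$ of lengths $e_1,e_2,e_3$. The core is then the purely group-theoretic assertion that a transitive subgroup of $S_d$ containing single cycles of lengths $e_1\le e_2\le e_3$ with $e_1+e_2+e_3=2d+1$ must contain $G_{1,\QQb}$, where $G_{1,\QQb}\in\{A_d,S_d\}$ by Lemma~\ref{lem:gensys}(1) precisely because $\underline{C}\neq(6;4,4,5)$. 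I would deduce this from the classification underlying \cite[Theorem~5.3]{liuosserman} (Jordan--Marggraf-type results on primitive groups containing a cycle). This gives $G_{1,\QQb}\subseteq G_{1,a}$, and when $G_{1,\QQb}\simeq S_d$ the resulting sandwich forces $G_{1,a}=S_d$.

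For Statement (2) the first move is to reduce to a single block. Since $\underline{C}\neq(6;4,4,5)$, the group $G_{1,\QQb}$ is normal in $S_d$. By Proposition~\ref{prop:Eisenstein} the group $G_{n,a}$ is transitive on the $d^n$ leaves, hence on the $d^{n-1}$ vertices at level $n-1$. For $g\in G_{n,a}$ carrying block $i$ to block $i'$ and any $\nu\in N_{n,a}$ one has $g\nu g^{-1}\in N_{n,a}$ (as $N_{n,a}\lhd G_{n,a}$), and the component maps satisfy
\[
\mathrm{pr}_{i'}(g\nu g^{-1})=s_g\,\mathrm{pr}_i(\nu)\,s_g^{-1},
\]
where $s_g\in S_d$ is the permutation induced by the tree isomorphism $\mathcal{S}(T_n^i)\to\mathcal{S}(T_n^{i'})$. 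Thus the images $\mathrm{pr}_i(N_{n,a})$ are pairwise conjugate in $S_d$, and because $G_{1,\QQb}\lhd S_d$ it suffices to prove $\mathrm{pr}_{i_0}(N_{n,a})\supseteq G_{1,\QQb}$ for one chosen $i_0$.

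The plan for that single block is to verify the same two hypotheses as in (1): transitivity of $\mathrm{pr}_{i_0}(N_{n,a})$, and the presence of single cycles of lengths $e_1,e_2,e_3$. For transitivity I would use the prime $p$ essentially: good monomial reduction at $p$ with $\nu_p(a)=1$ makes the numerator of $f^n-a$ Eisenstein at $p$ (Proposition~\ref{prop:Eisenstein}), so $p$ is totally ramified in $K_{n,a}/\QQ$ and each intermediate layer is totally ramified at the prime above $p$; the inertia group $I$ at a prime of the Galois closure above $p$ then acts transitively on the leaves, and $I\cap N_{n,a}$ projects onto a transitive subgroup of $\mathcal{S}(T_n^{i_0})\simeq S_d$. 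For the single cycles I would use Lemma~\ref{lem:elts}(4): for each $j$ the element $\alpha_{j,n,a}\in N_{n,a}$ has a single $e_j$-cycle as its component at some position $i_j$; choosing $\gamma_j\in G_{n,a}$ sending block $i_j$ to block $i_0$ and replacing $\alpha_{j,n,a}$ by $\gamma_j\alpha_{j,n,a}\gamma_j^{-1}\in N_{n,a}$, the $i_0$-component becomes a single $e_j$-cycle. Hence $\mathrm{pr}_{i_0}(N_{n,a})$ is a transitive subgroup of $S_d$ containing single cycles of lengths $e_1,e_2,e_3$, which is exactly the configuration of (1); the same group-theoretic input yields $\mathrm{pr}_{i_0}(N_{n,a})\supseteq G_{1,\QQb}$, and the reduction of the previous paragraph propagates this to every $i$.

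The hard part is twofold. The genuine obstacle is the group-theoretic classification invoked in both halves — that transitivity together with single cycles of lengths $e_1,e_2,e_3$ forces containment of $G_{1,\QQb}$ — and this is exactly where the type $(6;4,4,5)$ must be excluded, since there $G_{1,\QQb}\simeq S_5$ is neither $A_6$ nor $S_6$ (so it is not even normal in $S_6$, which would also break the block-reduction step). The second delicate point is justifying that the inertia image at $p$ is transitive on a single block, i.e.\ that total ramification of $p$ genuinely descends to the layer $K_{n,a}/K_{n-1,a}$ rather than splitting; this is where the precise Eisenstein computation in Proposition~\ref{prop:Eisenstein} must be used rather than merely transitivity of $G_{n,a}$.
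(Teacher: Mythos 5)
Your Statement (1) and the skeleton of Statement (2) track the paper's own proof closely: transitivity of $G_{1,a}$ from Proposition~\ref{prop:Eisenstein}, the single cycles $h_{j,1}$ from Lemma~\ref{lem:elts}(2), the primitivity argument plus Williamson's theorem behind \cite[Theorem 5.3]{liuosserman} for $d>10$ with case analysis for $d\leq 10$, the reduction of (2) to one block via transitivity of $G_{n,a}$ on the level-$(n-1)$ vertices (your normality remark for $G_{1,\QQb}$ in $S_d$ makes explicit a step the paper leaves implicit), and the planting of an $e_1$-, $e_2$-, and $e_3$-cycle at a single block by conjugating the $\alpha_{j,n,a}$ of Lemma~\ref{lem:elts}(4). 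The one place you diverge is how you obtain transitivity of $\mathrm{pr}_{i_0}(N_{n,a})$, and there your argument has a genuine gap.

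Let $\mathfrak{P}$ be a prime of $M_n$ above $p$ and $I\subseteq G_{n,a}$ its inertia group. Transitivity of $I$ on the $d^n$ leaves is correct, and counting gives $[I:I_\ell]=d^n$ and $[I:I_v]=d^{n-1}$ for a leaf $\ell$ below the level-$(n-1)$ vertex $v$, so the \emph{stabilizer} $I_v$ induces a transitive subgroup on the $d$ leaves of the block. But what you need is the image of $I\cap N_{n,a}$, which is the inertia group of the relative extension $M_n/M_{n-1}$ at $\mathfrak{P}$: this is a normal subgroup of (the image of) $I_v$, and a normal subgroup of a transitive group need only have orbits of some common size $s$ dividing $d$ --- nothing in these counts rules out $s<d$, or even $s=1$ when $M_n/M_{n-1}$ is unramified at $\mathfrak{P}$. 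Forcing $s=d$ amounts to showing the numerator of $f(x)-b$ cuts out a totally ramified degree-$d$ extension of the completion of the \emph{Galois closure} $M_{n-1}$ at $\mathfrak{P}\cap M_{n-1}$; your proposed repair (total ramification ``descending to the layer $K_{n,a}/K_{n-1,a}$'') targets the wrong field, since $\mathrm{pr}_{i_0}$ is defined on $N_{n,a}=\Gal(M_n/M_{n-1})$, not on a Galois group over $K_{n-1,a}$. Nor is the stronger local statement free: good monomial reduction forces $p\mid d$ (Proposition~\ref{prop:Belyiredcrit}(2) with $e_2\geq 2$), so ramification at $p$ is wild, $e(M_{n-1}/\QQ)$ at $p$ may acquire extra $p$-power factors, and the Newton-polygon slope $1/d^n$ of $f(x)-b$ may then lie in the value group of the completed $M_{n-1}$, defeating Eisenstein-type irreducibility reasoning. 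The paper avoids inertia entirely at this juncture: it identifies $\mathrm{pr}_1(N_{n,a})$ with the Galois group of the specialization of $f$ at a point $b$ with $f^{n-1}(b)=a$ and extracts transitivity from the degree count $[K_{n,a}:\QQ]=d^n$ of Proposition~\ref{prop:Eisenstein}. Note finally that the three planted cycles cannot substitute for this input: for the abstract type $(7;5,5,5)$, three $5$-cycles with common support generate an intransitive subgroup of $S_7$ even though $e_1+e_2+e_3=2d+1$, so without transitivity your classification step does not launch.
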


\begin{proof}
The existence of the prime $p$ of good monomial reduction implies that
the Galois group $G_{1,a}\subseteq S_d$ of $K_{1,a}/\QQ$ is a transitive
group on $d$ letters (Proposition \ref{prop:Eisenstein}). The
conditions on $a$ with respect to the primes $q_i$ imply that
$G_{1,a}$ contains elements $h_{1, 1}, h_{2,1}, h_{3,1}$, which are pure cycles of
length $e_1, e_2, e_3$ respectively, where $e_1 + e_2 + e_3 = 2d+1$
(Lemma  \ref{lem:elts}(2)).

We argue as in the proof of~\cite[Theorem 5.3]{liuosserman} to show that
$G_{1,a}$ contains a subgroup isomorphic to~$G_{1,\QQb}$. We start by proving
that $G_{1,a}$ is primitive.  To reach a contradiction, suppose that
$G_{1,a}$ is\textit{ not} primitive.  Since $G_{1,a}$ is transitive
on $d$ letters, there exists a number $m \vert d$ (with $1 < m < d$)
and a division of $\{1, 2, \ldots, d\}$ into $d/m$ blocks of length
$m$ such that every element of $G_{1,a}$ either has order
strictly less than $m$ and acts trivially on the blocks, or has order
$mk$ for some $k \geq 1$.  Now we distinguish the following cases.
\begin{itemize}
\item If all $e_i$ are strictly less than $m$, then $2d+1=e_1+e_2+e_3
  < 3m \leq 3d/2$, since $m\leq d/2$, by assumption.  We obtain a contradiction.
\item If all  $e_i$ are a multiple of $m$, then
  $2d+1 = e_1 + e_2 + e_3$ is divisible by $m$. Since $m\vert d$ we obtain  a
  contradiction. 
\item Assume that exactly one of the $e_i$ is strictly less than $m$;
  this is necessarily $e_1$, since $1 < e_1 \leq e_2 \leq e_3$.  Write
  $e_2 = mk_2$ and $e_3 = mk_3$ for some $k_2,k_3 \geq 1$. We obtain $2d + 1
  = e_1 + m(k_2+ k_3)$. Since $m\vert d$, we have $e_1 \equiv 1 \pmod m$.
  This implies that $e_1>m$, and we obtain a contradiction.
\item If exactly two of the $e_i$ (that is, $e_1$ and $e_2$) are
  strictly less than $m$, then a similar argument shows that $e_1 +
  e_2 \equiv 1 \pmod m$ so $e_1 + e_2 = 1 + m\leq 1+d/2$. But then $e_3 = (2d+1)
  - (e_1+e_2) \geq 3d/2>d$, which again yields a  contradiction.
\end{itemize}

Hence $G_{1,a}$ is primitive, as claimed.  For $d > 10$, the group
$G_{1,a}$ contains a cycle of length $e \leq (d-e)!$ (namely,
$h_{1,1}$). Hence, by Williamson's Theorem \cite{Williamson}, we have
that $G_{1,a}$ is isomorphic to either $A_d$ or $S_d$.  If at least one
of the $e_i$ is even, then $G_{1,a}$ is isomorphic to $S_d$. In both
cases we therefore have that $G_{1,a}$ contains $G_{1, \QQb}$ (Lemma
\ref{lem:gensys}(1)).  For $d \leq 10$, the statement follows from the
case-by-case analysis in the proof of  \cite[Theorem
  5.3]{liuosserman}.  Statement (1) follows.

\bigskip\noindent The group $N_{n,a}$ contains the elements
$\alpha_{1,n,a}, \alpha_{2,n,a}, \alpha_{3,n,a}$ from Lemma
\ref{lem:elts}(4). Recall that $G_{n,a}$ is a transitive group on
$d^n$ letters. In particular, it follows that conjugation by $G_{n,a}$
acts transitively on the $d^{n-1}$ blocks of $d$ indices, where the
$i$th block corresponds to the vertices of $T_n^i$. It therefore
suffices to prove Statement (2) for $i=1$.

 Replacing $\alpha_{j,n,a}$ by a conjugate under $G_{n,a}$, we may
 assume that the component $\alpha_{j,n,a}^1$ in $\mathcal{S}(T_n^1)$
 is non-trivial. Note that the group elements we conjugate by may
 depend on $j\in \{1,2,3\}$. We conclude that the image of $N_{n,a}$
 under projection to $\mathcal{S}(T_n^1)$ contains an $e_1$-cycle, an
 $e_2$-cycle, and an $e_3$-cycle.  We denote this image by
 $G_{n,a}^1$.

The group $G_{n,a}^1$ may be identified with the Galois
group of the specialization of $f$ at a point $b$ with
$f^{n-1}(b)=a$. The corresponding field extension $K_{1,b}$ of $K_{0,b} := \QQ$
defined by $f$ may therefore be identified with a subextension of
$K_{n,a}/K_{n-1,a}$ (cf. Definition \ref{def:Kn}). Since $[K_{n,a}:K_{0,a}]=d^n$ (Proposition
\ref{prop:Eisenstein}), it follows that $ [K_{1,b}:K_{0,b}]=d$. We
conclude that $G_{n,a}^1$ is a transitive group on $d$ letters. The
argument from the proof of Statement (1) shows that $G_{1, \QQb}\subseteq
G_{n,a}^1\subseteq S_d$. This proves Statement (2).
\end{proof}

Theorem \ref{G2aG2} below extends the conclusion of Proposition
\ref{prop:condEisenstein}(1) that $G_{1, \QQb}\subseteq G_{1,a}$ under Conditions \ref{conditions} to arbitrary $n$. Phrased differently,
we show that we do not need additional conditions on $a$ in passing from
$n=1$ to arbitrary $n$. We exclude  exactly the same types as
in Theorem \ref{thm:G2alt}.

\begin{theorem}\label{G2aG2}
Let $f$ be a normalized Belyi map of type $(d; e_1, e_2,
e_3)\notin\{(4;3,3,3), (6;4,4,5)\}$.  Choose $a \in
\mathbb{P}^1(\QQ)\setminus \{0,1,\infty\}$ and distinct prime numbers
$p, q_1, q_2, q_3$ such that Conditions \ref{conditions} hold.  Then
\[
G_{n, \QQb}\subseteq G_{n,a} \qquad\text{ for all }\qquad n \geq 2.
\]
\end{theorem}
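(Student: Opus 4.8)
The plan is to prove the containment by induction on $n$, taking the case $n=1$ from Proposition~\ref{prop:condEisenstein}(1) as the base. Throughout I would exploit the three compatible short exact sequences
\[
1 \to N_{n,\bullet} \to G_{n,\bullet} \xrightarrow{\ \pi_{n-1}\ } G_{n-1,\bullet} \to 1, \qquad \bullet \in \{\QQb,\, a,\, \QQ\},
\]
together with the fact that $G_{n,\QQb}$ is normal in $G_{n,\QQ}$, being the geometric Galois group. The inductive step splits into a kernel statement and an assembly statement: assuming $G_{n-1,\QQb}\subseteq G_{n-1,a}$, I would first prove $N_{n,\QQb}\subseteq N_{n,a}$, and then assemble this with the induction hypothesis to obtain $G_{n,\QQb}\subseteq G_{n,a}$.

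The heart of the argument is the kernel statement, and here I would mirror the proofs of Proposition~\ref{prop:betas} and Lemma~\ref{lem:conjH}, replacing the geometric elements $\alpha_{j,n}$ by the specialized elements $\alpha_{j,n,a}\in N_{n,a}$ of Lemma~\ref{lem:elts}(4). First I would show $N_{n,a}^{i}\supseteq A_d$ for every $i$. By transitivity of the $G_{n,a}$-action on the $d^{n-1}$ blocks (which holds since $G_{n,a}$ is transitive on the leaves of $T_n$ by Proposition~\ref{prop:Eisenstein}), it suffices to treat one $i_0$. Proposition~\ref{prop:condEisenstein}(2) shows that $N_{n,a}$ projects onto a subgroup of $\mathcal{S}(T_n^{i_0})\cong S_d$ containing $A_d$; conjugation by $N_{n,a}$ therefore normalizes $N_{n,a}^{i_0}$ and induces on block $i_0$ an action through a group containing $A_d$. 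Since any nontrivial subgroup of $S_d$ normalized by $A_d$ contains $A_d$ (for $d\geq 5$; the cases $d\in\{3,4\}$ are treated separately, as in Proposition~\ref{prop:betas}), it is then enough to exhibit one nontrivial element of $N_{n,a}^{i_0}$, for which I would use $\beta_{n,a}:=[\alpha_{1,n,a},[\alpha_{2,n,a},\alpha_{3,n,a}]]$, whose block components are triple commutators of single cycles and hence vanish off the common support of the three elements. Once $N_{n,a}^{i}\supseteq A_d$ is known for all $i$, the disjointly supported subgroups generate $(A_d)^{d^{n-1}}\subseteq N_{n,a}$, which already yields the claim when $G_{1,\QQb}\simeq A_d$. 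In the case $G_{1,\QQb}\simeq S_d$ I would additionally run the counting argument of Lemma~\ref{lem:conjH} verbatim, using $\alpha_{s,n,a}$ for an index $s$ with $e_s$ even together with block-transitivity, to see that $N_{n,a}$ also surjects onto the block-even space $\mathcal{H}$; as $N_{n,\QQb}$ is exactly the preimage of $\mathcal{H}$ under the componentwise sign map and contains $(A_d)^{d^{n-1}}$, this gives $N_{n,\QQb}\subseteq N_{n,a}$.

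For the assembly I would use that $G_{n,\QQb}$ is generated by $N_{n,\QQb}$ together with lifts of generators of $G_{n-1,\QQb}$. Because $G_{n,\QQb}\trianglelefteq G_{n,\QQ}$ and each inertia generator $h_{j,n}\in G_{n,a}$ is $G_{n,\QQ}$-conjugate to the branch cycle $g_{j,n}\in G_{n,\QQb}$, every $h_{j,n}$ and all of its $G_{n,a}$-conjugates in fact lie in $R:=G_{n,a}\cap G_{n,\QQb}$, which is normal in $G_{n,a}$. Hence $P:=\pi_{n-1}(R)$ is a normal subgroup of $G_{n-1,a}$ contained in $G_{n-1,\QQb}$ and containing the $G_{n-1,a}$-normal closure of the $h_{j,n-1}$; using the explicit structure of $G_{n-1,\QQb}$ as an iterated wreath product of $A_d$ (resp.\ as $E_{n-1}$) I would argue that this normal closure exhausts $G_{n-1,\QQb}$, so $P=G_{n-1,\QQb}$. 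Given $N_{n,\QQb}\subseteq R$, the map $\pi_{n-1}\colon R\to G_{n-1,\QQb}$ is then surjective with kernel $N_{n,\QQb}$, whence $R=G_{n,\QQb}$, i.e.\ $G_{n,\QQb}\subseteq G_{n,a}$.

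The step I expect to be the main obstacle is the production of a nontrivial element of $N_{n,a}^{i_0}$, that is, showing that $\beta_{n,a}\neq 1$ and is supported on a single block. Unlike in Proposition~\ref{prop:betas}, the elements $h_{j,n}$ are inertia generators at three unrelated primes $q_1,q_2,q_3$ and need not form the standard generating system of Lemma~\ref{lem:gensys}(2); in particular their level-one supports need not meet in a single point, so a priori $\beta_{n,a}$ could vanish or be supported on several blocks. Overcoming this requires pinning down the support structure of the $h_{j,n}$ --- for which I would lean on the precise good-reduction description of the ramification in Proposition~\ref{prop:localEisenstein} and an explicit check that the relevant triple commutator of single cycles is nontrivial --- and, if necessary, isolating a single block by further commutating against elements of $N_{n,a}$ supplied by Proposition~\ref{prop:condEisenstein}(2).
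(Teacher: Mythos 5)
Your overall architecture matches the paper's: base case from Proposition~\ref{prop:condEisenstein}(1), the specialized elements $\alpha_{j,n,a}$ of Lemma~\ref{lem:elts}(4), the commutator $\beta_{n,a}=[\alpha_{1,n,a},[\alpha_{2,n,a},\alpha_{3,n,a}]]$, the transitivity-plus-simplicity argument giving $N_{n,a}^i\simeq A_d$, and a rerun of the counting argument of Lemma~\ref{lem:conjH} (which, as you correctly note, only needs that exactly $e_s^{n-1}$ components of $\alpha_{s,n,a}$ are non-trivial). However, the step you yourself flag as the ``main obstacle'' is a genuine gap, and it is exactly where the paper does its real work. The resolution is not, as you suggest as a fallback, further commutation against elements of $N_{n,a}$; it is a normalization of the inertia generators at the outset. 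Since each $h_{j,1}$ is a single $e_j$-cycle (Lemma~\ref{lem:elts}(2)) and $G_{1,a}\supseteq G_{1,\QQb}\supseteq A_d$ by Proposition~\ref{prop:condEisenstein}(1), one may replace each $h_{j,n}$ by a $G_{n,a}$-conjugate so that $|\supp(h_{1,1})\cap\supp(h_{2,1})\cap\supp(h_{3,1})|=1$ (here $e_1+e_2+e_3=2d+1$ is used); in the case $G_{1,\QQb}\simeq S_d$, weak rigidity of the triple of Lemma~\ref{lem:gensys}(2) even allows one to take $h_{j,1}=\sigma^{-1}g_{j,1}\sigma$ with $\sigma\in S_d$ \emph{independent of} $j$. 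After this normalization the proof of Lemma~\ref{alphas}(2) transfers to the $\alpha_{j,n,a}$: the component of $\alpha_{j,n,a}$ in $\mathcal{S}(T_n^i)$, for $i=1+\sum_{k}(\ell_k-1)d^{k-1}$, is an $e_j$-cycle when every $\ell_k\in\supp(h_{j,1})$ and trivial otherwise. Consequently $\beta_{n,a}$ is supported on the unique block $i_0$ determined by the one-point intersection, with component $[h_{1,1},[h_{2,1},h_{3,1}]]\neq\mathrm{id}$, exactly as in Claim~1 of Proposition~\ref{prop:betas}. Without this conjugation step your worry is justified --- the triple commutator can a priori be trivial or spread over several blocks --- and your proposal supplies no mechanism that closes this.

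Your assembly step also diverges from the paper and imports a second unproven claim. The paper finishes by counting, as in Theorem~\ref{thm:G2alt}: since $N_{n,\QQb}$ is generated by $(A_d)^{d^{n-1}}$ together with lifts of the block-even space $\mathcal{H}$ (this is the content of Proposition~\ref{prop:betas} and Lemma~\ref{lem:conjH}), and precisely these generators have been exhibited inside $N_{n,a}$, one gets $N_{n,\QQb}\subseteq N_{n,a}$ and then concludes inductively via the index computation. By contrast, your route through $R:=G_{n,a}\cap G_{n,\QQb}$ hinges on the assertion that the $G_{n-1,a}$-normal closure of the $h_{j,n-1}$ exhausts $G_{n-1,\QQb}$, which you leave as a promissory note: it is not obvious, since the three conjugators relating $h_{j,n-1}$ to $g_{j,n-1}$ are unrelated, so the join of the three normal closures need not contain $\langle g_{1,n-1},g_{2,n-1},g_{3,n-1}\rangle$ without a further argument about the normal subgroup structure of $E_{n-1}$ (respectively of the iterated wreath product of $A_d$). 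Your observation that $h_{j,n}\in G_{n,\QQb}$ automatically, by normality of $G_{n,\QQb}$ in $G_{n,\QQ}$, is correct and tidy, but the extra machinery is avoidable, and as it stands both the key nontriviality step and the assembly step of your proposal remain open.
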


\begin{proof}
We argue as in the proofs
of Section \ref{sec:gensys}.  The statement of the theorem for $n=1$ holds by
Proposition \ref{prop:condEisenstein}(1).

 After replacing the $h_{j,1}$ by a
conjugate under $G_{1,a}\supset A_d$ we may assume that
\begin{equation}\label{eq:supp}
|\supp(h_{1,1})|\cap |\supp(h_{2,1})|\cap|\supp(h_{3,1})|=1.
\end{equation}
Here we have also used that $2d+1=e_1+e_2+e_3$. (For the generating
system  from Lemma \ref{lem:gensys}(2), the unique
integer in the intersection is $e_3$.)  In the case that
$G_{1,\QQb}\simeq S_d$ we have that $G_{1,\QQb}=G_{1,a}\simeq S_d$, and
in this case we may even assume that $h_{j,1}=g_{j,1}$ for all $j\in
\{1,2,3\}$. More precisely, from the fact that the generating system
 from Lemma \ref{lem:gensys}(2) is weakly
rigid it follows that we may assume that
$h_{j,1}=\sigma^{-1}g_{j,1}\sigma$ for all $j\in \{1,2,3\}$ and some
$\sigma\in S_d$ which is independent of $j$.

   The proof of Lemma
\ref{alphas}(2) may be applied to the elements $\alpha_{j,n,a}$ defined in
Lemma \ref{lem:elts}(4). In the current situation
we conclude that the component $\alpha_{j,n,a}^i$ of $\alpha_{j,n,a}$ in
$\mathcal{S}(T_n^i)$ satisfies
\[
\alpha_{j,n,a}^i=\begin{cases}\text{ an $e_1$-cycle }&\text{  if } i\in\supp(h_{h,1}),\\
\text{ trivial }&\text{ otherwise}.\end{cases}
\]

 The following claim is
analogous to Claim $1$ in the proof of Proposition \ref{prop:betas}.

\bigskip
\textbf{Claim }1: We have that $N_{n,a}^i\neq \emptyset$ for some
$1\leq i\leq d^n$.

Consider the commutator 
\[
\beta_{n, a} := [\alpha_{1,n,a},[\alpha_{2,n,a}, \alpha_{3,n,a}]].
\] 
Since
$\alpha_{2,n,a}, \alpha_{3,n,a} \in N_{n,a}$, it follows from
Equation \eqref{eq:wrconj} that the component $\beta_{n, a}^i$ of
$\beta_{n, a}$ in $\mathcal{S}(T_n^i)$ is
\[
\beta_{n, a}^i = [\alpha_{1,n,a}^i,[\alpha_{2,n,a}^i, \alpha_{3,n,a}^i ] ]\in
  N_{n,a}^i.
\] 
Arguing as in the proofs of Lemma \ref{alphas} and Proposition
\ref{prop:betas} we conclude that the component $\beta_{n, a}^i$ for
$i=1+\sum_{k=1}^{n-1}(\ell_k-1)d^{k-1}$ is non-trivial if and only if
$\ell_k \in \supp(h_{1,1})\cap\supp(h_{2,1})\cap \supp(h_{3,1})$ for
all $1\leq k\leq n-1$.  Equation (\ref{eq:supp}) implies that
$\beta_{n, a}^i$ is non-trivial for a unique $i_0$, i.e., $\beta_{n, a}\in
N_n^{i_0}$. Moreover,  the $i_0$-th component
$\beta_{n, a}^{i_0}$ equals
\[
[h_{1,1},[h_{2,1},h_{3,1}]]\neq \mathrm{id}.
\]
The last statement may for example be checked explicitly using the
fact that $(h_{1,1}, h_{2,1}, h_{3,1})$ is uniformly conjugate under
$S_d$ to $(g_{1,1}, g_{2,1}, g_{3,1})$, together with the explicit
expression for the $g_{j,1}$ given in Lemma \ref{lem:gensys}(2).
Claim 1 is now proved.

\bigskip
\textbf{Claim }2: $N_{n,a}^i\simeq A_d$ for all $1\leq i\leq d^{n-1}$.\\

Claim 2 follows from Claim 1  as in the proof of
Proposition~\ref{prop:betas} (Claim~2). 

The proof of Lemma~\ref{lem:conjH}
applies in this case as well: in the proof of that lemma, we only use
the fact that exactly $(e_j)^{n-1}$ components of the element $\alpha_{j, n}\in
N_{n, \QQb}$ are non-trivial. In the current situation, this property follows
from Lemma \ref{lem:elts}(4).
We conclude  that $N_{n,a}$ contains a
subgroup isomorphic to~$N_{n,\QQb}$. The statement of the theorem
follows as in the proof of Theorem~\ref{thm:G2alt}.
\end{proof}

The following straightforward corollary of Theorem \ref{G2aG2}
summarizes the relation between the groups $G_{n, \QQb}$, $G_{n,a}$,
$G_{n, \QQ}$ in the case that all assumptions we have imposed at
various places in this paper hold. Recall that we gave explicit
conditions guaranteeing that $G_{n, \QQb}=G_{n,\QQ}$ for all $n\geq 1$
in Corollaries~\ref{cor:condsgn} and \ref{cor:descent}.

\begin{corollary}\label{cor:index} Let $f$ be a normalized Belyi map of type $(d; e_1, e_2, e_3)
\notin\{(4;3,3,3), (6;4,4,5)\}$. Choose $a \in
    \mathbb{P}^1(\QQ)\setminus \{0,1,\infty\}$ and distinct primes
     $p, q_1, q_2, q_3$ such that Conditions \ref{conditions}
    hold. Assume that $G_{n, \QQb}=G_{n,\QQ}$ for all $n\geq 1$. Then
    $G_{n,a}=G_{n,\QQb}$ for all $n\geq 1$. 
\end{corollary}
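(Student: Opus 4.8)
The plan is to obtain the statement from a short squeeze argument, combining the two chains of inclusions that have already been established. First I would recall that, under Conditions \ref{conditions}, Theorem \ref{G2aG2} furnishes $G_{n,\QQb}\subseteq G_{n,a}$ for all $n\geq 2$, while the remaining case $n=1$ is supplied by Proposition \ref{prop:condEisenstein}(1), which gives $G_{1,\QQb}\subseteq G_{1,a}$. Hence $G_{n,\QQb}\subseteq G_{n,a}$ holds for every $n\geq 1$.

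Next I would invoke the inclusion \eqref{eq:Galin}, namely $G_{n,a}\hookrightarrow G_{n,\QQ}$. This is available for each $n$ precisely because Conditions \ref{conditions} include a prime $p$ of good monomial reduction with $\nu_p(a)=1$, so that $[K_{n,a}:\QQ]=d^n=[F_n:\QQ(t)]$ by Proposition \ref{prop:Eisenstein}; the numerator of $f^n-a$ is then irreducible, which is exactly the hypothesis needed to fix the embedding \eqref{eq:Galin}. Combining the two inclusions yields
\[
G_{n,\QQb}\subseteq G_{n,a}\subseteq G_{n,\QQ}\qquad\text{ for all }n\geq 1.
\]

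Finally, the standing hypothesis $G_{n,\QQ}=G_{n,\QQb}$ for all $n$ collapses this chain: the outer two groups coincide, so the middle group is squeezed between equal groups and must equal both, giving $G_{n,a}=G_{n,\QQb}$ for all $n\geq 1$, as desired.

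I do not expect any genuine obstacle here: all the substance has already been absorbed into Theorem \ref{G2aG2}, into Proposition \ref{prop:condEisenstein}(1) for the base case, and into the descent criteria of Corollaries \ref{cor:condsgn} and \ref{cor:descent}, which are what produce concrete maps $f$ for which the hypothesis $G_{n,\QQ}=G_{n,\QQb}$ actually holds. The only point requiring minor care is bookkeeping on the range of $n$: since Theorem \ref{G2aG2} is stated only for $n\geq 2$, the $n=1$ inclusion $G_{1,\QQb}\subseteq G_{1,a}$ must be cited separately from Proposition \ref{prop:condEisenstein}(1) before the squeeze can be applied uniformly in $n$.
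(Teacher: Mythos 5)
Your proposal is correct and follows essentially the same route as the paper's own proof: the chain $G_{n,\QQb}\subseteq G_{n,a}\subseteq G_{n,\QQ}$ (Theorem \ref{G2aG2} and \eqref{eq:Galin}) collapsed by the hypothesis $G_{n,\QQb}=G_{n,\QQ}$. Your explicit handling of the $n=1$ case via Proposition \ref{prop:condEisenstein}(1) is a minor bookkeeping refinement the paper absorbs into its citation of Theorem \ref{G2aG2}, whose proof notes the same base case.
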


\begin{proof}
We have $G_{n, \QQb}\subseteq G_{n,a} \subseteq G_{n, \QQ}\subseteq
\Aut(T_n)$ for all $n \geq 1$: the first inclusion is Theorem
\ref{G2aG2}, the second one is Equation (\ref{eq:Galin}, and the third
holds by definition. The assumption that $G_{n,\QQb}=G_{n, \QQ}$ for
all $n\geq 1$ therefore implies that all three groups are equal. The second statement on the index follows from Lemma \ref{lem:En}(1).
\end{proof}

\begin{remark}
Corollary \ref{cor:index} also immediately implies that, under the same assumptions,
\[
[\mathrm{Aut}(T_n) : G_{n,a}] \to \infty \text{ as } n \to \infty.
\]
\end{remark}

\section{Applications to dynamical sequences}\label{sec:app}

Let $f$ be a normalized Belyi map of degree $d$ such that $G_{1,
  \overline{\QQ}} \simeq S_d$. By Theorem \ref{thm:G2alt}, the
iterates~$f^n$ then have geometric monodromy groups $G_{n,
  \overline{\QQ}} \simeq E_n$ for all $n \geq 1$, where the groups
$E_n$ are defined in Definition \ref{def:En}.  We prove in Theorem
\ref{EnfixEn} that when $E_1 \simeq S_d$ and for any $d \geq 3$,  the proportion of elements of $E_n \simeq G_n$ that
fix a leaf on level $n$ tends to zero as $n$ tends to infinity.  This
generalizes \cite[Theorem 5.1]{BFHJY}, where the authors prove the
same for the Belyi map $f(x) = -2x^3+3x^2$ of degree $d=3$; our proof
is inspired by theirs.

Choose $a \in \mathbb{P}^1(\QQ)\setminus \{ 0, 1, \infty \}$ and distinct primes $p, q_1, q_2, q_3$ such that Conditions \ref{conditions} hold. Then $G_{n, \overline{\QQ}} \subseteq G_{n,a} \subseteq G_{n, \QQ}$ for all $n \geq 1$, by Theorem \ref{G2aG2}. Further assume that $G_{n, \overline{\QQ}} \simeq G_{n, \QQ}$ for all $n \geq 1$, so that these inclusions all become equalities. For $f$ of degree $d \geq 3$, 
Theorem \ref{EnfixEn} then shows that the proportion of elements of $G_{n,a}$ that fix a leaf tends to zero as $n$ tends to infinity.
In Corollary \ref{cor:seq} we derive some arithmetic dynamical consequences from this theorem. 

\subsection{Proportion of elements fixing a leaf}

\begin{definition}
Let $n \geq 1$. Define 
\[
\begin{split}
E_{n, \text{fix}} & = \{ \text{ elements of } E_n \text{ that fix a leaf } \},\\
A_{n, \mathrm{id}} & = \vert \{ \text{ elements of } E_n \text{ that act as } \mathrm{id} \text{ on } T_1 \} \vert , \\
A'_{n, \mathrm{id}} & = \vert \{ \text{ elements of } E_{n, \text{fix}} \text{ that act as } \mathrm{id} \text{ on } T_1 \} \vert ,
\end{split}
\]
and for $2 \leq k \leq d$, let  
\begin{equation}
\begin{split}
A_{n, k} & = \vert \{ \text{ elements of } E_n \text{ that act as as } \tau \text{ with } \vert \mathrm{supp}(\tau) \vert = k \text{ on } T_1 \} \vert, \\
A'_{n, k} & = \vert \{ \text{ elements of } E_{n,\text{fix}} \text{ that act as as } \tau \text{ with } \vert \mathrm{supp}(\tau) \vert = k \text{ on } T_1 \} \vert.
\end{split}
\end{equation}
\end{definition} 

 We see that $\vert E_n \vert = A_{n, \text{id}} + \sum_{i=2}^{d}
 A_{n,i}$ and $\vert E_{n, \text{fix}} \vert = A'_{n, \text{id}} +
 \sum_{i=2}^{d-1} A'_{n,i}$.  Recall from Section~2 that
\begin{equation}\label{eq:En+1}
\vert E_{n+1} \vert = \frac{\vert E_1 \vert}{2} \vert E_{n} \vert^{d} =  \vert E_{n} \vert^{d} d!/2.
\end{equation}
Finally, we note that for any $n \geq 1$, exactly half of the elements
of $E_{n-1}\wr E_1$ are contained in $E_n$ by Definition \ref{def:En}.

\begin{theorem}\label{EnfixEn}
Let $T_n$ be the regular $d$-ary rooted tree of level $n$. 
Then 
\[
\vert E_{n, \text{fix}} \vert / \vert E_n \vert \to 0 \text{ as } n \to \infty.
\]
\end{theorem}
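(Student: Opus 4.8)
The plan is to derive an exact recursion for the proportion $p_n:=|E_{n,\text{fix}}|/|E_n|$ of elements of $E_n$ that fix a leaf, and then to show this recursion forces $p_n\to 0$. Two structural facts drive everything. Writing an element of $E_{n+1}\subseteq E_n\wr E_1$ as $(\boldsymbol\sigma,\tau)=((\sigma_1,\dots,\sigma_d),\tau)$, the wreath action \eqref{eq:wraction} shows that $(\boldsymbol\sigma,\tau)$ fixes a leaf of $T_{n+1}$ if and only if $\sigma_i\in E_{n,\text{fix}}$ for some fixed point $i$ of $\tau$. Secondly, unwinding Definition~\ref{def:En} and the formula $\sgn_2=\sgn_2\circ\pi_2$ from \eqref{eq:sgn2}, the condition $(\boldsymbol\sigma,\tau)\in E_{n+1}$ reads: $\sigma_i\in E_n$ for all $i$, together with the single scalar relation $\sgn(\tau)\prod_{i=1}^d\sgn(\pi_1(\sigma_i))=1$. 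The character $\epsilon:=\sgn\circ\pi_1:E_n\to\{\pm1\}$ is surjective, hence balanced (an easy induction on $n$, using that $\pi_1:E_n\to S_d$ is onto). Consequently, for each fixed $\tau$ the number of admissible $\boldsymbol\sigma$ is exactly $|E_n|^d/2$; in particular $\tau$ is equidistributed over $S_d$, and whenever $\tau\neq\mathrm{id}$ there is a free coordinate on which the sign relation simply halves the count.

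Next I would count the elements of $E_{n+1}$ fixing \emph{no} leaf. Put $q_n:=1-p_n$ and $N:=|E_n|-|E_{n,\text{fix}}|$. For a fixed $\tau$ one needs $\sigma_i$ to fix no leaf at each of the $d-|\supp(\tau)|$ fixed points $i$, the remaining coordinates ranging over $E_n$. For $\tau\neq\mathrm{id}$ a free coordinate lets the sign relation halve the count, giving $\tfrac12 N^{\,d-|\supp(\tau)|}|E_n|^{\,|\supp(\tau)|}$; only for $\tau=\mathrm{id}$ does the relation fail to decouple, contributing $\tfrac12\big(N^d+(N_+-N_-)^d\big)$, where $N_\pm$ counts the leaf-free elements with $\epsilon=\pm1$. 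Summing over $\tau$ and dividing by $|E_{n+1}|=|E_n|^d\,d!/2$ (see \eqref{eq:En+1}) yields
\[
q_{n+1}=\Phi(q_n)+R_n,\qquad \Phi(q):=\frac{1}{d!}\sum_{\tau\in S_d}q^{\,d-|\supp(\tau)|},\qquad R_n:=\frac{1}{d!}\Big(\frac{N_+-N_-}{|E_n|}\Big)^{d}.
\]
Since $\epsilon$ is balanced on all of $E_n$, one has $|N_+-N_-|\le|E_{n,\text{fix}}|$, so that $|R_n|\le\tfrac1{d!}p_n^{\,d}$.

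The analysis of this recursion is the main obstacle, because $\Phi'(1)=\frac1{d!}\sum_\tau(d-|\supp(\tau)|)=1$ makes $q=1$ a \emph{neutral} fixed point: the approach to the target $p=0$ is only quadratic, so the estimate must be second order and must simultaneously absorb $R_n$. The key lemma I would prove is the global inequality $\delta(p)\ge\frac1{d!}p^{d}$ on $[0,1]$, strict on $(0,1)$, where $\delta(p):=p-\big(1-\Phi(1-p)\big)=\Phi(1-p)-(1-p)$. To obtain it, set $\widetilde{\Phi}(q):=\Phi(q)-\frac1{d!}(1-q)^{d}$; one checks that $\widetilde{\Phi}$ again has non-negative coefficients — which reduces to the elementary bound $D_m\ge(-1)^m$ on the derangement numbers $D_m$ — and is strictly convex for $d\ge3$, while $\widetilde{\Phi}(1)=\widetilde{\Phi}'(1)=1$. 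Thus $q=1$ is a double zero of the convex function $\widetilde{\Phi}(q)-q$, forcing $\widetilde{\Phi}(q)-q>0$ on $[0,1)$, which is exactly the claimed $\delta(p)>\frac1{d!}p^{d}$ for $p\in(0,1)$.

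Finally I would iterate the recursion in the form $p_{n+1}=p_n-\delta(p_n)-R_n$. From $\delta(p_n)\ge\frac1{d!}p_n^{d}\ge|R_n|$ it follows that $(p_n)$ is non-increasing, hence converges to some $L\ge0$. If $L>0$, then by continuity and compactness $m_L:=\min_{p\in[L,1]}\big(\delta(p)-\frac1{d!}p^{d}\big)>0$, so that $p_n-p_{n+1}=\delta(p_n)+R_n\ge\delta(p_n)-\frac1{d!}p_n^{d}\ge m_L$ for every $n$, contradicting $p_n\ge0$. Hence $L=0$, i.e.\ $|E_{n,\text{fix}}|/|E_n|\to0$, as claimed. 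The hypothesis $d\ge3$ enters precisely in making $\widetilde{\Phi}$ strictly convex (hence $m_L>0$) and in keeping the coupling correction $|R_n|\le\frac1{d!}p_n^{d}$ dominated by $\delta(p_n)$.
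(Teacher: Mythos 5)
Your proposal follows the same skeleton as the paper's proof: decompose $E_{n+1}\subseteq E_n\wr E_1$, use the character $\sgn(\tau)\prod_i\sgn(\pi_1(\sigma_i))$ to halve counts, and reduce the theorem to a one-variable iteration for the leaf-fixing proportion whose fixed point $0$ is neutral (the paper's $\phi$ and your $\Phi$ are related by $\phi(p)=1-\Phi(1-p)$). Where you differ is in precision: the paper settles for the upper bound $x_{n+1}\le\phi(x_n)$ and concludes softly, by iterating the increasing map $\phi$ from $1$ and noting $0$ is its only fixed point on $[0,1]$; you derive an exact recursion $q_{n+1}=\Phi(q_n)+R_n$. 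Your correction term $R_n=\frac{1}{d!}\bigl((N_+-N_-)/\vert E_n\vert\bigr)^d$ is precisely the contribution of the one inclusion--exclusion term in which \emph{all} $d$ coordinates are constrained, so that no free coordinate is available and exact halving by the sign relation is not automatic --- the very term that the paper's equality \eqref{eq:An+1idfix} silently treats as halved. So your bookkeeping is, in this respect, more careful than the paper's; the price is that you must dominate $R_n$, which forces your quantitative drift lemma $\delta(p)\ge\frac{1}{d!}p^d$ in place of the paper's qualitative fixed-point argument.

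There is, however, a genuine parity error in your proof of that key lemma. The coefficient of $q^{d-m}$ in $\widetilde{\Phi}(q)=\Phi(q)-\frac{1}{d!}(1-q)^d$ is $\frac{1}{d!}\binom{d}{m}\bigl(D_m-(-1)^{d-m}\bigr)$, not $\frac{1}{d!}\binom{d}{m}\bigl(D_m-(-1)^{m}\bigr)$; for odd $d$ and $m=1$ this equals $-1/(d-1)!<0$, so the non-negativity of coefficients fails. Worse, convexity itself fails: for $d=3$ one has $\Phi(q)=\frac16(q^3+3q+2)$, hence $\widetilde{\Phi}(q)=\frac16(2q^3-3q^2+6q+1)$ and $\widetilde{\Phi}''(q)=2q-1<0$ on $[0,\tfrac12)$. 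Thus the double-zero-of-a-convex-function argument collapses for every odd $d\ge 3$ --- including the motivating cubic case. The lemma itself is nonetheless true, and your proof is repairable: since $\frac{1}{d!}\sum_{\tau\in S_d}\binom{\mathrm{fix}(\tau)}{j}=\frac{1}{j!}$ for $0\le j\le d$ (all factorial moments of the fixed-point count equal $1$), one has the exact identity
\[
\Phi(q)=\sum_{j=0}^{d}\frac{(q-1)^j}{j!},
\qquad\text{so}\qquad
\delta(p)=\Phi(1-p)-(1-p)=\sum_{j=2}^{d}\frac{(-1)^j p^j}{j!},
\]
and elementary alternating-series estimates then give $\delta(p)-\frac{1}{d!}p^d\ge \frac{p^2}{6}>0$ for all $p\in(0,1]$ and $d\ge 3$ (for $d=3$ this is $\frac{p^2}{2}-\frac{p^3}{3}\ge\frac{p^2}{6}$). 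Note also a small imprecision in your lemma statement: you assert strictness only on $(0,1)$, but your compactness step takes $m_L=\min_{[L,1]}\bigl(\delta(p)-\frac{1}{d!}p^d\bigr)$, so you need strict positivity at $p=1$ as well; it holds, since $\delta(1)-\frac{1}{d!}=\frac{D_d-1}{d!}>0$ for $d\ge3$. With these repairs your argument is complete, and arguably sharper than the paper's.
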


\begin{remark}
Let $f$ be a normalized Belyi map of degree $d \geq 3$, 
such that $G_{n, \QQ} \simeq G_{n, \overline{\QQ}} \simeq E_n$ for all $n \geq 1$. If we choose $a \in \mathbb{P}^1(\QQ) \setminus \{0,1,\infty\}$  and distinct primes $p, q_1, q_2, q_3$ such that Conditions \ref{conditions} hold, then $G_{n, \overline{\QQ}} \simeq G_{n,a} \simeq G_{n, \QQ}$ for all $n \geq 1$, and Theorem \ref{EnfixEn} implies that
\[
\vert G_{n, a, \text{fix}} \vert / \vert G_n \vert \to 0 \text{ as } n \to \infty,
\]
where $G_{n,a, \text{fix}} = \{ \text{ elements of } G_{n,a} \text{ that fix a leaf } \}$.
\end{remark}

\begin{proof} 
By inclusion-exclusion, this yields
\begin{equation}\label{eq:An+1idfix}
A'_{n+1, \text{id}} = \frac{1}{2} \left( d \vert E_{n, \text{fix}} \vert \vert E_n \vert^{d-1} - \binom{d}{2} \vert E_{n, \text{fix}} \vert^2 \vert E_n \vert^{d-2} +\ldots + (-1)^{d-1} \vert E_{n, \text{fix}} \vert^d \right).
\end{equation}

Similarly, an element of $E_{n+1,\text{fix}}$ that acts as a non-trivial permutation $\tau$ on $T_1$ with $\vert \mathrm{supp}(\tau) \vert = k$ is of the form $((\sigma_1, \ldots, \sigma_d),\tau)$, where 
at least one of the $\sigma_j$ with $j~\not\in~\mathrm{supp}(\tau)$ is in $E_{n,\text{fix}}$. Moreover, $\sgn_2(\tau) \prod_{j=1}^d \sgn_2(\sigma_j) =~1$. This yields
\begin{equation}\label{eq:An+1kfix}
\begin{split}
A'_{n+1, k} \leq \frac{1}{2} \vert E_n \vert^k \cdot & \Big( (d-k) \vert E_{n, \text{fix}} \vert \vert E_n \vert^{d-k-1} \\ &- \binom{d-k}{2} \vert E_{n, \text{fix}} \vert^2 \vert E_n \vert^{d-k-2} +\cdots + (-1)^{d-k-1} \vert E_{n, \text{fix}} \vert^{d-k} \Big).
\end{split}
\end{equation}

Adding the contributions from \eqref{eq:An+1idfix} and \eqref{eq:An+1kfix}, dividing out by $\vert E_{n+1} \vert$, and using Equation \eqref{eq:En+1} yields 
\begin{equation}\label{eq:Anfix}
\begin{split}
\frac{\vert E_{n+1,\text{fix}} \vert}{\vert E_{n+1} \vert} \leq
\frac{1}{\vert E_1 \vert} & \Bigg( \left(1 - \left(1 - \frac{\vert
  E_{n, \text{fix}} \vert}{\vert E_n \vert } \right)^d \right) \\ &+
\sum_{i=2}^{d-1} \vert \{ \tau \in S_d : \vert
\mathrm{supp}(\tau)\vert = i \} \vert \cdot \left(1 - \left(1 - \frac{\vert
  E_{n, \text{fix}} \vert }{\vert E_n \vert} \right)^{d-i}
\right)\Bigg).
\end{split}
\end{equation}

For any $n \geq 1$, let $x_n$ denote $\vert E_{n,\text{fix}}\vert / \vert E_n \vert$. Then Equation \eqref{eq:Anfix} shows that $x_{n+1} \leq \phi(x_n)$, where 
\[
\phi: x \mapsto \frac{1}{d!} \sum_{i=0}^{d} \vert\{ \tau \in S_d : \vert \mathrm{supp}(\tau)\vert = i \}\vert \cdot \left(1 - \left(1 - x \right)^{d-i} \right).
\]
The function $g(x) =1- (1-x)^k$ is increasing on $[0,1]$ for any $k \geq 1$, hence so is $\phi$.
Since $1- (1-x)^k \geq 0$ for any $x\in [0,1]$ and any $k \geq 1$, we have $\phi(x) \geq 0$ for any $x \in [0,1]$. 

Hence we find that 
\[ 
x_{n+1} \leq \phi(x_n )\leq \ldots \leq \phi^n(x_1) \leq \phi^n(1),
\]
where $x_1=\vert E_{1,\text{fix}}\vert / \vert E_1 \vert \leq 1$. Let $y_n$ denote $\phi^n(1)$. Then $\{y_n\}_{n\geq 1}$ is a non-increasing sequence in the interval $[0,1]$. Hence it has a limit $y$ which satisfies $\phi(y)=y$. Since the only solution to $\phi(y)=y$ in $[0,1]$ is $y=0$, the limit of the $y_n=\phi^n(1)$ is zero. Thus, the limit of $x_n=\vert E_{n,\text{fix}}\vert / \vert E_n \vert$ is also zero, as required.
\end{proof}

\subsection{Dynamical sequences}

For a set $S$ of prime numbers, let $\delta(S)$ denote its natural
density if it exists. A \emph{dynamical sequence} in a field $K$ is a
sequence $\{c_i\}_{i \geq 0}$ with $c_i \in K$ such that $c_i =
f(c_{i-1})$ for some map $f: K \to K$. Prime divisors of entries of
such sequences were first studied using Galois theory by Odoni in
\cite{Odoni-prime}. For four particular quadratic maps $f$, Jones
\cite{jonesquad} shows the density of prime divisors in the dynamical
sequence for $f$ is zero; Gottesman and Tang \cite{gottang} show
non-zero densities can also occur for quadratic maps. More general
treatments of higher degree maps can be found in
e.g. \cite{fabergran}, \cite{ingsil}. 

\begin{corollary}\label{cor:seq}
Let $f$ be a normalized Belyi map such that $G_{n, \QQ} \simeq G_{n,
  \overline{\QQ}} \simeq E_n$ for all $n \geq 1$.
\begin{enumerate}
\item Choose $a \in \mathbb{P}^1(\QQ) \setminus \{0,1,\infty\}$ and
  distinct primes $p, q_1, q_2, q_3$, such that Conditions
  \ref{conditions} hold.  Consider the sequence $\{a_i\}_{i \geq 0}$
  where $a_0 \in \mathbb{P}^1(\mathbb{Q})\setminus \{0,1,\infty, a\}$
  and $a_{i} = f(a_{i-1})$ for all $i \geq 1$.  Then $\delta(S) = 0$,
  where $S$ is the set of primes $q \in \mathbb{Q}$ such that $a_i
  \equiv a \pmod q$ for some $i \geq 0$.
\item Let $K$ be the splitting field of $f$ and consider the $d-e_1$ non-zero preimages of zero under $f$, denoted $c_j \in K$ for $1 \leq j \leq d-e_1$. 
Suppose that for each $c_j$ there exist primes $\mathfrak{p}_j, \mathfrak{q}_{1,j}, \mathfrak{q}_{2,j}, \mathfrak{q}_{3,j}$ of $K$ such that the natural analogues of Conditions~\ref{conditions} hold. 
Now form the sequence
  $\{b_i\}_{i \geq 0} \subseteq \QQ$ where $b_0 \in
  \mathbb{P}^1(\mathbb{Q})\setminus \{0,1,\infty\}$, and $b_{i} = f(b_{i-1})$ for all $i \geq
  1$.  Then $\delta(T) = 0$, where $T$ is the set of rational primes $q \in
  \mathbb{P}$ such that $q \vert b_i$ for some $i \geq 0$.
\end{enumerate}
\end{corollary}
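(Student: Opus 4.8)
The plan is to deduce both statements from Theorem~\ref{EnfixEn} via the standard Frobenius/Chebotarev dictionary between roots of specialized iterates and leaf-fixing elements of the Galois group. First I would set up this dictionary: for all but finitely many primes $q$ (those unramified in the splitting field of $f(n,a)$ and of good reduction for the relevant data), the factorization type of the reduction modulo $q$ of the numerator $f(n,a)$ of $f^n-a$ is the cycle type of a Frobenius element $\mathrm{Frob}_q\in G_{n,a}$ acting on the $d^n$ roots, which we have identified with the leaves of $T_n$. Thus $f(n,a)$ has a root in $\FF_q$ exactly when $\mathrm{Frob}_q$ fixes a leaf, and by Chebotarev the density of $R_n:=\{q : f(n,a)\text{ has a root modulo }q\}$ equals $|G_{n,a,\text{fix}}|/|G_{n,a}|$. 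Under our hypotheses, Corollary~\ref{cor:index} (using Conditions~\ref{conditions}) gives $G_{n,a}\simeq E_n$, so Theorem~\ref{EnfixEn} yields $\delta(R_n)\to 0$ as $n\to\infty$.

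For Statement (1), I would fix an arbitrary $N$ and bound $S$ by splitting on the minimal index $i_0=i_0(q)$ with $a_{i_0}\equiv a\pmod q$. If $i_0\geq N$, then $a_{i_0}=f^N(a_{i_0-N})\equiv a$ exhibits $a_{i_0-N}\bmod q$ as a root of $f(N,a)$ modulo $q$ (this reduction is finite, since $a_{i_0-N}\equiv\infty$ would force $a_{i_0}\equiv f^N(\infty)=\infty\not\equiv a$), so $q\in R_N$. If $i_0<N$, then $q$ divides the numerator of the nonzero rational number $a_{i_0}-a$, and only finitely many such $q$ arise as $i_0$ ranges over $\{0,\dots,N-1\}$. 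Hence $S\subseteq R_N\cup F_N$ with $F_N$ finite, so $\overline{\delta}(S)\leq \delta(R_N)=|G_{N,a,\text{fix}}|/|G_{N,a}|$ for every $N$; letting $N\to\infty$ gives $\delta(S)=0$. Here I use that the forward orbit $\{a_i\}$ does not contain $a$ (otherwise $a_{i_0}-a=0$ and $S$ would be cofinite), which is the implicitly intended situation.

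For Statement (2), the plan is to reduce to Statement (1) carried out over the splitting field $K$. After discarding the finitely many $q$ with $q\mid b_0$ or $b_0\equiv 0$, suppose $q\mid b_i$ for some $i\geq 1$, fix a prime $\mathfrak q$ of $K$ above $q$, and take $i$ minimal with $b_i\equiv 0\pmod{\mathfrak q}$; then $b_{i-1}\not\equiv 0$ but $\overline f(b_{i-1})\equiv 0$, so $b_{i-1}$ reduces to a nonzero root of $\overline f$, i.e.\ $b_{i-1}\equiv c_j\pmod{\mathfrak q}$ for one of the $d-e_1$ preimages $c_j$. Thus, outside a finite set, $T$ lies below $\bigcup_j S_j$, where $S_j$ is the density-zero set of $K$-primes produced by the argument of Statement~(1) applied over $K$ with seed $b_0$ and target $c_j$. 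This application is legitimate because the geometric group $G_{n,\QQb}\simeq E_n$ is unchanged over $K$, the specialized arithmetic group is sandwiched between $G_{n,\QQb}$ and $G_{n,K}$, which itself lies between $G_{n,\QQb}$ and $G_{n,\QQ}=G_{n,\QQb}=E_n$ and hence equals $E_n$, and we have assumed the $K$-analogues of Conditions~\ref{conditions} for each $c_j$. Since a finite union of density-zero sets has density zero and passing to rational primes below a density-zero set of $K$-primes preserves density zero, we conclude $\delta(T)=0$.

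The main obstacle I anticipate is not the dynamical bookkeeping but the careful handling of the finitely many excluded primes together with the identification of the governing Galois groups. Concretely, one must verify that $\delta(R_n)=|G_{n,a,\text{fix}}|/|G_{n,a}|$ remains valid after removing precisely the ramified primes and those where the reduction of $f(n,a)$ degenerates or the orbit develops a pole, and---the subtler point for Statement (2)---that the $K$-analogue of Corollary~\ref{cor:index} applies, so that the relevant specialized group over $K$ is the full $E_n$ and Theorem~\ref{EnfixEn} may be invoked for each target $c_j$. Once these identifications are in place, the rest is the formal density argument above, driven entirely by the containments $S\subseteq R_N\cup F_N$ and the limit $\delta(R_N)\to 0$.
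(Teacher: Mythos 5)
Your proposal is correct and follows essentially the same route as the paper's proof: splitting the orbit at a cutoff index into a finite exceptional set plus primes where $f^N(x)-a$ has a root, invoking Chebotarev and Theorem \ref{EnfixEn} with $G_{N,a}\simeq E_N$, and for (2) using the minimal-index trick to reduce modulo a $K$-prime to one of the targets $c_j$ and rerunning the argument over $K$ with $G_{n,c_j,K}\simeq E_n$. The only point worth noting is that the orbit-avoidance you flag as ``implicitly intended'' is in fact automatic: Conditions \ref{conditions} make the numerator of $f^n-a$ Eisenstein at $p$ (Proposition \ref{prop:Eisenstein}), so $f^n(x)-a$ has no rational root and $a_i\neq a$ for all $i$, and similarly the paper derives $b_i\neq c_j$ from the $c_j$ being irrational.
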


\begin{proof}
\begin{enumerate}
\item For any $n \geq 1$, the set $S_n$ of primes $q$ not dividing $a$ such that $a_i \equiv a \pmod q$ for some $0 \leq i \leq n-1$ is finite.
Further, if $a_i \equiv a \pmod q$ for some $i \geq n$, then the rational map $f^n(x) - a$ has a rational root over $\mathbb{Z}/q\mathbb{Z}$ (namely, $a_{i-n}$). 
It follows from the Chebotarev density theorem that
\[
\delta(S) \leq \delta( \{ q \in \mathbb{P}: q \not\in S_n \text{ and } f^n(x) - a \text{ has a root modulo } q \} ) = \frac{\vert E_{n, \text{fix}}\vert}{\vert E_n \vert},
\]  
so the result follows by letting $n \to \infty$ and using Theorem \ref{EnfixEn}.
\item Consider the sequence as a subset of $K$. We may ignore the finitely many primes of bad or good inseparable reduction for $f$, as well as the finitely many primes dividing $b_0$. Then $b_i \neq 0$ for any $i \geq 0$, i.e., $b_i \neq c_j$ for any $1 \leq j \leq d-e_1$, since by our assumption $G_{n,\QQ} \simeq G_{n,\QQb} \simeq E_n$ none of the $c_j$ are rational.
Let $\mathfrak{p}$ be a prime of $K$ such that $b_i \equiv 0 \pmod{\mathfrak{p}}$ for some, without loss of generality minimal, value of $i \geq 1$. Writing $f(x) = x^{e_1}f_1(x)/f_2(x)$ as before, we see that $f_1(b_{i-1}) \equiv 0 \pmod{\mathfrak{p}}$ and hence that $b_{i-1}$ is congruent modulo $\mathfrak{p}$ (but not equal) to $c_j$ for some $1 \leq j \leq d-e_1$. 

Arguing as in Section~\ref{sec:spec}, the assumptions on the $c_j$ guarantee that $G_{n,c_j,K} \simeq G_{n,K} \simeq G_{n,\overline{\mathbb{Q}}} \simeq E_n$ for all $n \geq 1$. Therefore, we conclude by observing that 
\[
\delta(T) = \delta(\{ p \in \mathbb{P} : \exists \mathfrak{p}\mid p \text{ prime of } K \text{ such that }  b_i \equiv c_j \pmod{\mathfrak{p}} \text{ for some } i \geq 1 \text{ and } 1 \leq j \leq d-e_1  \})
\]
and arguing as in (1). 
\end{enumerate}
\end{proof}

\begin{remark} Corollary \ref{cor:seq} generalizes \cite[Proposition 6.1 and Corollary 6.2]{BFHJY} to normalized Belyi maps of degree $ d \geq 3$ whose Galois groups satisfy $G_{n, \QQ} \simeq G_{n, \overline{\QQ}} \simeq E_n$.
\end{remark}

\providecommand{\bysame}{\leavevmode\hbox to3em{\hrulefill}\thinspace}
\providecommand{\MR}{\relax\ifhmode\unskip\space\fi MR }
\providecommand{\MRhref}[2]{%
  \href{http://www.ams.org/mathscinet-getitem?mr=#1}{#2}
}
\providecommand{\href}[2]{#2}

\end{document}